\newcommand{\R}{\mathbb{R}} 
\newcommand{\K}{\mathbb{K}} 
\newcommand{\N}{\mathbb{N}}
\newcommand{\Lie}{\mathcal{L}}
\newcommand{\M}{\mathbb{M}}
\newcommand{\A}{\mathcal{A}}
\newcommand{\0}{\mathbf{0}}
\newcommand{\bm}{\mathbf{m}}
\newcommand{\cs}{\mathcal{C}}
\DeclarePairedDelimiter{\abs}{\lvert}{\rvert}
\DeclarePairedDelimiter{\norm}{\lVert}{\rVert}
\DeclarePairedDelimiter{\Tr}{\textrm{Tr}(}{)}
\DeclarePairedDelimiterX{\inp}[2]{\langle}{\rangle}{#1, #2}
\DeclarePairedDelimiter{\supp}{\textrm{supp}(}{)}
\DeclarePairedDelimiter{\Mp}{\mathcal{M}_+(}{)}
\newtheorem{thm}{Theorem}[section]
\newtheorem{lem}[thm]{Lemma}
\newtheorem{prob}[thm]{Problem}
\newtheorem{cor}{Corollary}
\newtheorem{defn}{Definition}[section]
\newtheorem{rmk}{Remark} 
\newcommand{\levy}{L\'{e}vy }
\newcommand{\ito}{It\^{o} }
\newcommand{\cX}{\mathcal{X}}
\newcommand{\cY}{\mathcal{Y}}
\newcommand{\Sym}{\mathbb{S}}
\newcommand{\bbmu}{\boldsymbol{\mu}}
\newcommand{\bell}{\boldsymbol{\ell}}
\newlength{\exfiglength}
\title{\LARGE \bf Peak Value-at-Risk Estimation of Stochastic Processes using Occupation Measures
}
\renewcommand\footnotemark{}
\author{Jared Miller$^{1,3}$,  Matteo Tacchi $^2$, Mario Sznaier$^1$, Ashkan Jasour$^3$
\thanks{$^1$ J. Miller is with the Automatic Control Laboratory (IfA),  ETH Z\"{u}rich, Physikstrasse 3, 8092, Z\"{u}rich, Switzerland (e-mail: jarmiller@control.ee.ethz.ch).}
\thanks{$^2$M. Tacchi is with Automatic Control Laboratory, EPFL, 1015 Lausanne, Switzerland, and Univ. Grenoble Alpes, CNRS, Grenoble INP (Institute of Engineering Univ. Grenoble Alpes), GIPSA-lab, 38000 Grenoble, France. (e-mail: matteo.tacchi@gipsa-lab.fr).}
\thanks{$^3$J. Miller, and M. Sznaier are with the Robust Systems Lab,  ECE Department, Northeastern University, Boston, MA 02115. (msznaier@coe.neu.edu).}
\thanks{$^3$ A. Jasour is with the Team 347T-Robotic Aerial Mobility, Jet Propulsion Lab, Pasadena, CA, 91109. (e-mail: jasour@jpl.caltech.edu).}
\thanks{J. Miller and M. Sznaier were partially supported by NSF grants  CNS--1646121, ECCS--1808381 and CNS--2038493, AFOSR grant FA9550-19-1-0005, and ONR grant N00014-21-1-2431.  
% J. Miller was partially supported by the Chateaubriand Fellowship.
J. Miller was in part supported by the Chateaubriand Fellowship of the Office for Science \& Technology of the Embassy of France in the United States and by Swiss National Science Foundation Grant 200021\_178890.
M. Tacchi was supported by the French company R\'{e}seau de Transport d'\'{E}lectricit\'{e}, as well as the Swiss National Science Foundation under the “NCCR Automation” grant n$^{\circ}$51NF40\_{}180545.
}
}
\begin{document}

\maketitle
% \thispagestyle{empty}
% \pagestyle{empty}

%%%%%%%%%%%%%%%%%%%%%%%%%%%%%%%%%%%%%%%%%%%%%%%%%%%%%%%%%%%%%%

\begin{abstract}
\label{sec:abstract}
% \urg{Abstract goes here}
This paper formulates algorithms to upper-bound the maximum Value-at-Risk (VaR) of a state function along trajectories of stochastic processes. The VaR is upper bounded by two methods: minimax tail-bounds (Cantelli/Vysochanskij-Petunin) and Expected Shortfall/Conditional Value-at-Risk (ES). Tail-bounds lead to a infinite-dimensional Second Order Cone Program (SOCP) in occupation measures, while the ES approach creates a Linear Program (LP) in occupation measures. Under compactness and regularity conditions, there is no relaxation gap between the infinite-dimensional convex programs and their nonconvex optimal-stopping stochastic problems. Upper-bounds on the SOCP and LP are obtained by a sequence of semidefinite programs through the moment-Sum-of-Squares hierarchy. The VaR-upper-bounds are demonstrated on example continuous-time and discrete-time polynomial stochastic processes.
% and discrete-time stochastic processes.

\end{abstract}

% Each of these upper bounds yield valid quantile statistics: if the maximum height is less than 5 meters at a 99\% chance, then the maximum height is also less than 6 meters at a >99\% chance.

\section{Introduction}
\label{sec:introduction}

The behavior of stochastic processes can be interpreted by analyzing the time-evolving distributions of state functions $p(x)$ along trajectories. One such statistic is the $\epsilon$-\ac{VAR} of $p(x)$ with $\epsilon \in [0, 1)$, which also may be cast as the $(1-\epsilon)$-quantile statistic of $p(x)$ \cite{jorion2000value}. Our goal is to find the maximum \ac{VAR} obtained by $p(x)$ along trajectories of a stochastic process within a specified time horizon starting at a given initial condition. An example of this task in the context of aviation is to state that the supremal height of an aircraft with a $\geq 5\%$ chance of exceedence over the course of the flight is 100 meters.
We will refer to the task of upper-bounding the supremal \ac{VAR} along trajectories as the `chance-peak' problem.

The \ac{VAR} itself is a nonconvex and non-subadditive objective that is typically difficult to optimize. Two convex methods of upper-bounding for the \ac{VAR} are tail-bound and \ac{CVAR} approaches. Tail-bounds such as the Cantelli \cite{cantelli1929sui} and \ac{VP} \cite{vysochanskij1980justification} inequalities offer worst-case estimates on the \ac{VAR} of a univariate distribution given its first two moments \cite{dupacova1987minimax}. The \ac{CVAR} \cite{pflug2000some, rockafellar2002conditional} is a coherent risk measure \cite{artzner1999coherent} that returns the average value of a distribution conditioned upon being above the $\epsilon$-\ac{VAR}. The \ac{CVAR}  may be a desired optimization target \cite{sarykalin2008value} in terms of minimizing expected losses, given that the \ac{VAR} is invariant to the distribution shape past the $(1-\epsilon)$ quantile statistic.

Tail-bounds and \ac{CVAR} methods have both been previously applied to problems in control theory.
Tail-bound constraints have been employed for planning in \cite{wang2020non, han2022non}, in which moments of each time-step state-distribution are propagated using forward dynamics to the next discrete-time state.
\ac{CVAR} constraints for optimal control and safety have been used for continuous-time in \cite{cmiller2017optimal}, and for discrete-time Markov Decision Processes via  log-sum-exp \ac{CVAR}-upper-bounds in \cite{chapman2021risk}. We will be using the tail-bound and \ac{CVAR} upper-bounds to solve the chance-peak analysis problem.
% tian2017moment

The chance-peak problem is related to both chance constraints and peak estimation (optimal stopping). Chance constraints are hard optimization constraints that must hold with a specified probability \cite{shapiro2021lectures}.
Methods to convexly approximate these generically nonconvex chance constraints include tail-bounds, \ac{CVAR} programs, robust counterparts \cite{ben2009robust}, and scenario approaches (sampling)\cite{calafiore2005uncertain, campi2011sampling, tempo2013randomized}. The chance-constrained feasible set can be convex under specific distributional assumptions such as log-concavity \cite{lagoa2005probabilistically, ben2009robust}.

The chance-peak task is a specific form of an optimal stopping problem, in which the terminal time is chosen to maximize the \ac{VAR} of $p$. Optimal stopping is a specific instance of \iac{OCP}. The work in \cite{lewis1980relaxation} cast the generically nonconvex \ac{ODE} \acp{OCP} as a convex infinite-dimensional \ac{LP} in occupation measures with no conservatism (relaxation gap) introduced under continuity, compactness, and regularity assumptions. This method was extended in \cite{cho2002linear} to the optimal stopping of (Feller) stochastic processes such as \acp{SDE}, which formed an \ac{LP} to maximize the expectation of a state function $p$ along process trajectories.

These infinite-dimensional \acp{LP} must be truncated into finite-dimensional convex programs for tractable optimization. In the case where all problem data is polynomial, the moment-\ac{SOS} hierarchy of \acp{SDP} can be used to find a sequence of upper-bounds to the measure \acp{LP} \cite{lasserre2009moments}. Application of moment-\ac{SOS} polynomial optimization methods for deterministic or robust systems includes optimal control \cite{lasserre2008nonlinear},  reachable set estimation \cite{henrion2013convex}, and peak estimation \cite{fantuzzi2020bounding, miller2021uncertain}. Instances of polynomial optimization for stochastic processes include option pricing \cite{lasserre2006pricing}, probabilistic barrier certifiactes of safety \cite{prajna2004stochastic, prajna2007framework}, stopping of \levy processes \cite{kashima2010optimization}, infinite-time averages \cite{fantuzzi2016bounds}, and reach-avoid sets \cite{xue2022sdereachavoid}. We also note that polynomial optimization has been directly applied towards chance-constrained polynomial optimization \cite{jasour2015chance}, distributionally robust optimization \cite{de2020distributionally}, and minimization of the \ac{VAR} for static portfolio design \cite{tian2017moment}.

This paper puts forth the following contributions:
\begin{itemize}
    \item Infinite-dimensional convex programs (tail-bound \acp{SOCP} and \ac{CVAR} \acp{LP}) to upper-bound the \ac{VAR} of a stochastic process
    \item Converging sequences of \acp{SDP} in increasing size (degree) that upper-bound the infinite-dimensional program using the moment-\ac{SOS} hierarchy
    \item Experiments that demonstrate the utility of these methods on polynomial stochastic systems
\end{itemize}

Distinctions as compared to prior work include:
\begin{itemize}
    \item The chance-peak analysis problem maximizes the worst-case tail-bound/\ac{CVAR}, rather than minimizing worst-case upper-bounds on the \ac{VAR} (v.s. \cite{dupacova1987minimax, vcerbakova2006worst, shapiro2021lectures})
    \item Optimal stopping is performed with a tail-bound/\ac{CVAR} objective of $p$, rather than an expectation (mean) objective (v.s. \cite{cho2002linear, kashima2010optimization})
\end{itemize}

Sections of this research were accepted for presentation at the 62nd IEEE Conference on Decision and Control (CDC) \cite{miller2023chancepeak_short}. Contributions in this paper above the conference version include:
\begin{itemize}
    \item Non-\ac{SDE} stochastic processes
    \item \ac{CVAR} bounds and \acp{LP}    
    \item Safety analysis through distance estimation
    \item Proofs of no-relaxation-gap and strong duality
\end{itemize}

This paper is laid out as follows: Section \ref{sec:preliminaries} reviews notation, \ac{VAR} and its upper bounds, stochastic processes, and occupation measures. Section \ref{sec:problem_statement} poses the  chance-peak problem statement and lists relevant assumptions. Section \ref{sec:tail_bound} creates \iac{SOCP} in measures to bound the tail-bound chance-peak problem. Section \ref{sec:cvar_peak} formulates \iac{LP} in measures to solve the \ac{CVAR} chance-peak problem. Section \ref{sec:lmi} provides an overview of the moment-\ac{SOS} hierarchy of \acp{SDP}, and uses this hierarchy to approximate the tail-bound and \ac{CVAR} chance-peak programs. Section \ref{sec:extensions} extends the chance-peak framework towards the estimation of distance of closest approach to an unsafe set, and the analysis of switching stochastic processes. Section \ref{sec:examples} reports experiments of the tail-bound and \ac{CVAR} programs. Section \ref{sec:conclusion} summarizes and concludes the paper. Appendix \ref{app:duality_general} proves strong duality properties for a class of measure programs with linked semidefinite constraints. Appendix \ref{app:duality_chance} applies this general strong duality proof to the tail-bound chance-peak \acp{SOCP}. Appendix \ref{app:duality_cvar} proves strong duality of the \ac{CVAR} chance-peak \acp{LP}.
% \urg{Fill in the paper structure}
% Section \ref{sec:preliminaries} will review preliminaries such as notation, notions of stability for linear systems, and \ac{SOS} proofs of polynomial nonnegativity. Section \ref{sec:full_method} will present 
% The paper is concluded in Section \ref{sec:conclusion}.
\section{Preliminaries}
\label{sec:preliminaries}

% \subsection{Acronyms/Initialisms}

% \subsection{Acronyms/Initialisms}
\begin{acronym}[SOCP]
\acro{BSA}{Basic Semialgebraic}
% \acro{DDC}{Data Driven Control}

%\acro{CVAR}[CVaR]{Conditional Value-at-Risk}

\acro{CVAR}[ES]{Expected Shortfall or Conditional Value-at-Risk}

% \acro{GAS}{Globally Asymptotically Stable}

% \acro{CSP}{Correlative Sparsity Pattern}

\acro{LMI}{Linear Matrix Inequality}
\acroplural{LMI}[LMIs]{Linear Matrix Inequalities}
\acroindefinite{LMI}{an}{a}

\acro{LP}{Linear Program}
\acroindefinite{LP}{an}{a}
% \acro{OCP}{Optimal Control Problem}

\acro{MC}{Monte Carlo}
\acroindefinite{MC}{an}{a}

\acro{OCP}{Ordinary Differential Equation}
\acroindefinite{OCP}{an}{a}

\acro{ODE}{Ordinary Differential Equation}
\acroindefinite{ODE}{an}{a}
% \acro{POP}{Polynomial Optimization Problem}

\acro{PSD}{Positive Semidefinite}

% \acro{PD}{Positive Definite}

% \acro{PDE}{Partial Differential Equation}

\acro{RV}[RV]{Random Variable}

\acro{SDP}{Semidefinite Program}
\acroindefinite{SDP}{an}{a}

\acro{SDE}{Stochastic Differential Equation}
\acroindefinite{SDE}{an}{a}

\acro{SOC}{Second-Order Cone}
\acroindefinite{SOC}{an}{a}

\acro{SOCP}{Second-Order Cone Program}
\acroindefinite{SOCP}{an}{a}

\acro{SOS}{Sum of Squares}
\acroindefinite{SOS}{an}{a}

\acro{VAR}[VaR]{Value-at-Risk}

\acro{VP}{Vysochanskij-Petunin}

% \acro{WSOS}{Weighted Sum of Squares}

\end{acronym}

\subsection{Notation}

% % \urg{Fill in the notation}

% \old{The $n$-dimensional real Euclidean space is $\R^n$. The set of natural numbers is $\N$, and its subset of natural numbers between $1$ and $N$ is $1..N$. The set of $n$-multi-indices is $\N^n$. The degree of a multi-index $\alpha \in \N^n$ is $\abs{\alpha} = \sum_{i=1}^n \alpha_i$. 
% A monomial is a term $x^\alpha = \prod_{i=1}^n x_i^{\alpha_i}$ with degree $\deg{x^\alpha} = \abs{\alpha}$. A polynomial $p(x) \in \R[x]$ {is a linear combination of monomials} %may be uniquely represented in terms of multi-indices $\alpha$ and coefficients $p_\alpha$ as 
% $p(x) = \sum_{\alpha \in S} p_\alpha x^\alpha$ {with finite support ${S \subset} \N^n$ and degree $\deg(p) = \max_{\alpha \in S}|\alpha|$}. The degree of a vector of polynomials ($f \in (\R[x])^N$) is the maximum degree of its coordinates ($\deg f = \max_{i\in 1..N} \deg f_i$). The vector space of polynomials of degree at most $d$ is 
% $\R[x]_{\leq d}$ and its dimension is $\binom{n+d}{d}$.
% %The set of \ac{SOS} polynomials is $\Sigma[x],$ and its degree-$2d$-bounded subset is $\Sigma[x]_{\leq 2d} \subset \R[x]_{\leq 2d}$. 
% The \acf{SOC} (or Lorentz cone)  is ${\mathbb{L}^n = \{(y, s) \in \R^n \times \R : \ s \geq \norm{y}_2\}}$, where $\norm{{y}}_2 = ({y}_1^2 + \ldots + {y}_n^2)^{1/2}$ denotes the Euclidean norm.}

The real Euclidean space with $n$ dimensions is $\R^n$. The set of natural numbers is $\N$, the subset of natural numbers between $a$ and $b$ is $a..b \subset \N$, and the set of $n$-dimensional multi-indices is $\N^n$. The degree of a multi-index $\alpha \in \N^n$ is $\abs{\alpha} = \sum_{i=1}^n \alpha_i$. A monomial $x^\alpha = \prod_{i=1}^n x_i^{\alpha_i}$ has degree $\deg{x^\alpha} = \abs{\alpha} = \sum_{i=1}^n \alpha_i$. A polynomial $p(x) \in \R[x]$ is a linear combination of monomials
$p(x) = \sum_{\alpha \in S} p_\alpha x^\alpha$ {with finite support ${S \subset} \N^n$ and degree $\deg(p) = \max_{\alpha \in S}|\alpha|$}. The set $\R[x]_{\leq d}$of polynomials with degree at most $d$ forms a vector space of dimension is $\binom{n+d}{d}$. The $n$-dimensional \ac{SOC} is ${\mathbb{L}^n = \{(y, s) \in \R^n \times \R : \ s \geq \norm{y}_2\}}$, where $\norm{{y}}_2 = ({y}_1^2 + \ldots + {y}_n^2)^{1/2}$ is the Euclidean norm.

The topological dual of a Banach space ${B}$ is ${B}^*$. Given a topological space $X$, the set of all continuous functions over $X$ is $C(X)$, and the subcone of nonnegative continuous functions is $C_+(X) \subset C(X)$. The subset of $C(X)$ that is $k$-times continuously differentiable is $C^k(X)$. The cone of nonnegative Borel measures supported in $X$ is $\Mp{X}$. The space of signed Borel measures supported in $X$ is $\mathcal{M}(X) = \Mp{X} - \Mp{X}$. The sets $C(X)$ and $\mathcal{M}(X)$ are topological duals when $X$ is compact with a duality product $\inp{\cdot}{\cdot}$ via Lebesgue integration: for $f \in C(X), \ \mu \in \mathcal{M}(X), \inp{f}{\mu} = \int_{X} f(x) d \mu(x)$. The duality product over $C(X)$ and $\mathcal{M}(X)$ induces a duality pairing between $C_+(X)$ and $\mathcal{M}_+(X)$. We will slightly abuse notation to extend this duality product to Borel measurable functions $f$ as $\inp{f}{\mu} = \int_X f(x) d\mu(x)$.

The indicator function of $A \subseteq X$ is $I_A: X \rightarrow \{0, 1\}$ with value $1$ exactly on $A$. %, and has the values $I_A(x)=0$ for $x \not\in A$ and $I_A(x)=1$ for $x \in A$. 
The measure of $A \subset X$ with respect to $\mu \in \Mp{X}$ is $\mu(A) = \inp{I_A}{\mu}$, and the mass of $\mu$ is $\mu(X) = \inp{1}{\mu}$. The measure $\mu$ is a probability measure if $\mu(X) = 1$, under which $(X,\mu)$ is a probability space). 
% The mass of a measure $\mu \in \Mp{X}$ is $\mu(X) = \inp{1}{\mu}$, and $\mu$ is a probability measure if $\mu(X) = 1$ {(then $(X,\mu)$ is a probability space)}. 

The pushforward of a measure $\mu \in \Mp{X}$ along a function ${V}: X \rightarrow Y$ is ${V}_\# \mu \in \Mp{Y}$ such that $\forall {\varphi} \in C(Y), \ \inp{{\varphi}}{{V}_\# \mu} = \inp{{\varphi\circ V}}{\mu}$. 

If moreover $\mu$ is a probability measure, then $V$ is called a \ac{RV} and one can define the expected value $\mathbb{E}[V] = \inp{V}{\mu}$ and probability $\mu(V \in A) = \inp{I_A}{V_\#\mu}$.
The support of a measure $\mu$ {(resp. \ac{RV} $V$)} is the set of all points $x$ in which every open neighborhood $N_x$ of $x$ obeys $\mu(N_x) > 0$ {(resp. $V_\#\mu(N_x)>0$)}.
The Dirac probability $\delta_{\overline{x}}$ supported at ${\overline{x} \in X}$ is such that $\inp{f}{\delta_{\overline{x}}} = f({\overline{x}})$ for all $f \in C(X)$. For every $\mu \in \Mp{X}$ and $\nu \in \Mp{Y}$, the product $\mu \otimes \nu$ is the unique measure satisfying $\forall A \subseteq X, \ B \subseteq Y: \ (\mu \otimes \nu)(A \times B) = \mu(A) \nu(B)$. 
% The pushforward of a map $Q: \rw{S}\rightarrow Y$ along a measure $\mu\rev{(\rw{s})}$ is $Q_\# \mu\rev{(y)}$, which satisfies 
% $\forall f \in C(Y): \ \inp{f(y)}{Q_\# \mu(y)} = \inp{f(Q(\rw{s}))}{\mu(\rw{s})}$.

The operator $\wedge$ {(resp. $\vee$)} will denote the {min. (resp. max.)} of two quantities as $a \wedge b = \min(a,b)$ {(resp. $a \vee b = \max(a,b)$)}. Given a linear operator $\Lie: X \rightarrow Y$, the adjoint of $\Lie: X \rightarrow Y$  is $\Lie^\dagger: Y^* \rightarrow X^*$.
% The adjoint of a linear operator $\Lie: X \rightarrow Y$ is $\Lie^\dagger: Y^* \rightarrow X^*$.}
% For any two measures $\mu, \nu \in \Mp{X}$, t
The measure $\nu \in \Mp{X}$ is absolutely continuous to $\mu \in \Mp{X}$ ($\nu \ll \mu$) if $\forall A \subseteq X: \ \mu(A) = 0 \implies \nu(A) = 0$. {If} $\nu \ll \mu$, {then} there exists a nonnegative density $\rho: X \rightarrow \R^+$ such that $\forall f \in C(X): \inp{f(x)}{\nu(x)} = \inp{f(x)\rho(x)}{\mu(x)}$. This nonnegative density is also called the Radon-Nikodym derivative $\rho = \frac{d \nu}{d\mu}$. The measure $\mu$ dominates $\nu$ ($\nu \leq \mu$) if $\forall A \subseteq X: \nu(A) \leq \mu(A)$. Domination $\nu \leq \mu$ will occur if  $\nu \ll \mu$ and $\frac{d \nu}{d\mu} \leq 1$.

\subsection{Probability Tail Bounds and Value-at-Risk}
Define {a univariate \ac{RV} $V : (\Omega,\mathbb{P}) \rightarrow \mathbb{R}$ }% $\xi(\omega) \in \Mp{\R}$ as a univariate probability measure 
with finite first and second moments {($\abs{\mathbb{E}[V]} < \infty$, $\mathbb{E}[V^2] < \infty$, note that $V^\alpha$ is a \ac{RV} for any $\alpha \in \N$).} %($\inp{1}{\xi} = 1$, $\abs{\inp{\omega}{\xi}} < \infty,$ $\inp{\omega^2}{\xi} < \infty$). 
We will define the $\epsilon$-\ac{VAR} of {$V$} %$\xi$ 
for $\epsilon \in [0, 1]$ as 
%\begin{align}
%    &VaR_\epsilon(\xi) = \sup\left\{
%    \lambda \in \R \; | \; \xi([\lambda,\infty)) \geq \epsilon
%    \right\}. \label{eq:var_center}
%\end{align}
\begin{align}
    &\mathrm{VaR}_\epsilon(V) = \sup\left\{
    \lambda \in \R \; | \; \mathbb{P}(V \geq \lambda) \geq \epsilon
    \right\}. \label{eq:var_center}
\end{align}

Equation \eqref{eq:var_center} defines $\mathrm{VaR}_\epsilon({V})$ as the $(1-\epsilon)$ quantile statistic of ${V}$.
We now review two methods to upper-bound the \ac{VAR}: Concentration bounds and \ac{CVAR}.

\subsubsection{Concentration Bounds/Minimax}

This approach uses worst-case (minimax) bounds on the \ac{VAR} given the first and second moments of {$V$} \cite{dupacova1987minimax}. Letting %$\sigma^2 = \inp{\omega^2}{\xi} - \inp{\omega}{\xi}^2$ be the variance of the $\xi$, the Cantelli \cite{cantelli1929sui} \ac{VAR} upper-bound is
{$\sigma^2 = \mathbb{E}[V^2] - \mathbb{E}[V]^2$} be the variance of {$V$}, the Cantelli \cite{cantelli1929sui} \ac{VAR} upper-bound is
\begin{subequations}
\label{eq:tail_bounds}
\begin{align}
    \mathrm{VaR}_\epsilon({V}) &\leq \sigma \sqrt{(1/\epsilon)- 1} + {\mathbb{E}[V] = \mathrm{VC}_\epsilon(V)}. \label{eq:var_cant}\\    \intertext{
    {When $V$} is unimodal and $\epsilon \leq 1/6$, the sharper \ac{VP} bound may be applied as in \cite{vysochanskij1980justification}}
    \mathrm{VaR}_\epsilon({V}) &\leq \sigma \sqrt{4/(9\epsilon) - 1} + {\mathbb{E}[V] = \mathrm{VP}_\epsilon(V)}. \label{eq:var_vp}
\end{align}
\end{subequations}

% \begin{subequations}
% \label{eq:tail_bounds}
% \begin{align}
% \intertext{The Cantelli bound  for \ac{VAR} is \cite{cantelli1929sui}} 
%     VaR_\epsilon(\xi) &\leq \sigma \sqrt{1/(\epsilon)- 1}+\inp{\omega}{\xi} = VaR_\epsilon^{cant}(\xi). \label{eq:var_cant}\\    \intertext{
%     The \ac{VP} bound for the \ac{VAR} is \cite{vysochanskij1980justification}}
%     VaR_\epsilon(\xi) &\leq \sigma \sqrt{4/(9\epsilon) - 1} +\inp{\omega}{\xi}= VaR_\epsilon^{VP}(\xi). \label{eq:var_vp}
% \end{align}
% \end{subequations}
\subsubsection{Conditional Value-at-Risk / Expected Shortfall}

\begin{defn}
\label{defn:cvar}
The \ac{CVAR} is the mean value of ${V}$ such that ${V}$ is greater than or equal to the \ac{VAR} \cite[Equation 3]{rockafellar2002conditional}:
%\begin{align}
%    CVaR_\epsilon(\xi) = \frac{1}{\epsilon} \int_{\omega \geq VaR_\epsilon(\xi)} \omega d\xi(\omega). \label{eq:cvar}
%\end{align}
\begin{align}
    \mathrm{ES}_\epsilon(V) = \frac{1}{\epsilon} \, \mathbb{E}\left[I_{V \geq \mathrm{VaR}_\epsilon(V)} \, V\right]. \label{eq:cvar}
\end{align}

\end{defn}

\begin{rmk}
    A consequence of \eqref{eq:cvar} is {$\mathrm{ES}_\epsilon(V) \geq \mathrm{VaR}_\epsilon(V)$ for all \ac{RV} $V$} and probability values $\epsilon \in (0, 1]$. {Indeed, one has $\mathbb{E}[I_{V \geq \mathrm{VaR}_\epsilon(V)} \, V] \geq \mathrm{VaR}_\epsilon(V) \, \mathbb{E}[I_{V \geq \mathrm{VaR}_\epsilon(V)}] = \mathrm{VaR}_\epsilon(V) \, \mathbb{P}(V \geq \mathrm{VaR}_\epsilon(V)) \geq \epsilon \, \mathrm{VaR}_\epsilon(V)$ by definition of the \ac{VAR}.}
\end{rmk}

We now list other definitions for the \ac{CVAR}.

\begin{lem}[Equations 4 and 5 of \cite{rockafellar2002conditional}] {Defining the positive part $f_+ = f\vee 0$ of a function $f$, the} \ac{CVAR} is the solution to the parametric problem
%\begin{align}
%    CVaR_\epsilon(\xi) = \min_{\alpha \in \R} \alpha + \frac{1}{\epsilon} \int_{\omega > \alpha} (\omega-\alpha) d\xi(\omega). \label{eq:cvar_param}
%\end{align}
\begin{align}
    \mathrm{ES}_\epsilon(V) = \min \left\{ \lambda + \frac{1}{\epsilon} \mathbb{E}\left[(V-\lambda)_+\right] \; \middle| \; \lambda \in \mathbb{R} \right\}. \label{eq:cvar_param}
\end{align}

\end{lem}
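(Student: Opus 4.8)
The plan is to recognize the right-hand side as the unconstrained minimization of the convex function $g(\lambda) = \lambda + \tfrac{1}{\epsilon}\,\mathbb{E}[(V-\lambda)_+]$, to locate its minimizer at $\mathrm{VaR}_\epsilon(V)$, and to verify that the optimal value coincides with the expression for $\mathrm{ES}_\epsilon(V)$ in Definition~\ref{defn:cvar}.

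First I would establish that $g$ is well defined, convex, and coercive. For each fixed realization $v$ of $V$ the map $\lambda \mapsto (v-\lambda)_+$ is convex and $1$-Lipschitz, so $g$ inherits convexity and finiteness from the assumed finiteness of $\mathbb{E}[V]$. Coercivity follows by inspecting the two extremes: as $\lambda \to +\infty$ the expectation term vanishes and $g(\lambda)\sim \lambda \to +\infty$, while as $\lambda \to -\infty$ one has $(V-\lambda)_+ = V-\lambda$, so $g(\lambda) = \tfrac{1}{\epsilon}\mathbb{E}[V] + (1-\tfrac{1}{\epsilon})\lambda \to +\infty$ whenever $\epsilon < 1$ (the boundary case $\epsilon=1$ collapses to $g\equiv \mathbb{E}[V]=\mathrm{ES}_1(V)$ and is checked directly). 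Convexity together with coercivity guarantees that a minimizer exists.

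Next I would locate the minimizer through one-sided derivatives. Differentiating under the expectation (justified by dominated convergence and the $1$-Lipschitz bound) yields $g'_+(\lambda) = 1 - \tfrac{1}{\epsilon}\mathbb{P}(V>\lambda)$ and $g'_-(\lambda) = 1 - \tfrac{1}{\epsilon}\mathbb{P}(V\geq \lambda)$. A point $\lambda^\star$ minimizes the convex $g$ iff $g'_-(\lambda^\star)\leq 0 \leq g'_+(\lambda^\star)$, i.e. iff $\mathbb{P}(V>\lambda^\star)\leq \epsilon \leq \mathbb{P}(V\geq \lambda^\star)$. By the definition \eqref{eq:var_center} of the \ac{VAR} and the left-continuity of $\lambda\mapsto \mathbb{P}(V\geq\lambda)$, the choice $\lambda^\star = \mathrm{VaR}_\epsilon(V)$ satisfies both inequalities and is therefore optimal.

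Finally I would evaluate $g(\lambda^\star)$. Writing $(V-\lambda^\star)_+ = (V-\lambda^\star)\,I_{V\geq \lambda^\star}$ gives $g(\lambda^\star) = \lambda^\star\bigl(1 - \tfrac{1}{\epsilon}\mathbb{P}(V\geq\lambda^\star)\bigr) + \tfrac{1}{\epsilon}\mathbb{E}[I_{V\geq\lambda^\star}\,V]$, which reduces to $\tfrac{1}{\epsilon}\mathbb{E}[I_{V\geq\lambda^\star}V] = \mathrm{ES}_\epsilon(V)$ exactly when $\mathbb{P}(V\geq \mathrm{VaR}_\epsilon(V))=\epsilon$. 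I expect the main obstacle to be precisely this last cancellation when $V$ carries an atom at its \ac{VAR}, so that $\mathbb{P}(V\geq\mathrm{VaR}_\epsilon(V))>\epsilon$: there the leading term does not vanish and the indicator-based expression of Definition~\ref{defn:cvar} must be reconciled with the parametric optimum. One clean way to handle both regimes uniformly is the pointwise bound $(v-\lambda)_+\geq (v-\lambda)\,I_{v\geq\lambda^\star}$, which yields $g(\lambda)\geq \lambda(1-\tfrac{1}{\epsilon}\mathbb{P}(V\geq\lambda^\star)) + \tfrac{1}{\epsilon}\mathbb{E}[I_{V\geq\lambda^\star}V]$ and pins $g(\lambda)\geq \mathrm{ES}_\epsilon(V)$ for all $\lambda$ only in the atomless case; the atom case then either invokes continuity of the law of $V$ to exclude it, or splits the atom mass as in the superquantile form of $\mathrm{ES}_\epsilon$. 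Everything else is routine convex calculus.
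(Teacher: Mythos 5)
Your argument is correct, and it supplies something the paper does not: the paper's ``proof'' of this lemma is the citation to \cite{rockafellar2002conditional} and nothing more, so there is no in-paper derivation to match. Your route is the standard convex-analytic one, and every step checks out: $g(\lambda)=\lambda+\tfrac{1}{\epsilon}\mathbb{E}[(V-\lambda)_+]$ is convex and coercive for $\epsilon<1$, the one-sided derivatives are $g'_+(\lambda)=1-\tfrac{1}{\epsilon}\mathbb{P}(V>\lambda)$ and $g'_-(\lambda)=1-\tfrac{1}{\epsilon}\mathbb{P}(V\geq\lambda)$, the minimizers are exactly the $\lambda^\star$ with $\mathbb{P}(V>\lambda^\star)\leq\epsilon\leq\mathbb{P}(V\geq\lambda^\star)$, and $\lambda^\star=\mathrm{VaR}_\epsilon(V)$ as defined in \eqref{eq:var_center} lies in this interval by left-continuity of $\lambda\mapsto\mathbb{P}(V\geq\lambda)$. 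This is essentially the original Rockafellar--Uryasev argument, made self-contained.

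The obstacle you isolate in the final step is genuine, but it is a defect of the paper's Definition \ref{defn:cvar} rather than of your proof. When $V$ carries an atom at $\mathrm{VaR}_\epsilon(V)$ with $\mathbb{P}(V\geq\mathrm{VaR}_\epsilon(V))>\epsilon$, the parametric optimum equals the superquantile $\tfrac{1}{\epsilon}\bigl(\mathbb{E}[V\,I_{V>\lambda^\star}]+\lambda^\star(\epsilon-\mathbb{P}(V>\lambda^\star))\bigr)$, which splits the atom, whereas the indicator formula \eqref{eq:cvar} does not. A Bernoulli check makes the disagreement concrete: for $V\in\{0,1\}$ equiprobable and $\epsilon=0.3$ one has $\mathrm{VaR}_{0.3}(V)=1$, the parametric minimum is $g(1)=1$, but \eqref{eq:cvar} returns $\tfrac{1}{0.3}\cdot\tfrac12=\tfrac53>\max V$. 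So \eqref{eq:cvar_param}, read as an identity with \eqref{eq:cvar}, is false without an atomless hypothesis (or without replacing \eqref{eq:cvar} by the atom-splitting definition that \cite{rockafellar2002conditional} actually uses as their Equation 3). Your proof establishes the parametric characterization of the correctly defined expected shortfall, and your closing remark pins down the only point at which the paper's restatement needs the no-atom caveat; nothing further is missing from your argument.
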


\begin{lem}[Equation 5.5 of \cite{follmer2010convex}] {Denoting $\psi = V_\#\mathbb{P}$ as the probability law of $V$ (equivalently, one can write $V \sim \psi$) and $\mathrm{id}_\mathbb{R} = \mathbb{R} \ni s \mapsto s$ as the identity map, the} \ac{CVAR} is the solution to the following optimization program in measures: \label{lem:cvar_abscont}
\begin{subequations}
\label{eq:cvar_abscont}
%\begin{align}
%    CVaR_\epsilon(\xi) = & \sup_{\psi \in \Mp{\R}} \inp{\omega}{\psi} \\
%    & \psi \ll \xi \label{eq:cvar_abscont_def}\\
%    & \frac{d\psi}{d\xi} \leq 1/\epsilon \label{eq:cvar_radon_nikodym} \\
%    & \inp{1}{\psi} = 1. \label{eq:cvar_prob}
%\end{align}
\begin{align}
    \mathrm{ES}_\epsilon(V) = & \sup_{\nu \in \Mp{\R}} \inp{\mathrm{id}_\mathbb{R}}{\nu} \label{eq:cvar_abscont_obj}\\
    & \nu \ll \psi \label{eq:cvar_abscont_def}\\
    & \frac{d\nu}{d\psi} \leq 1/\epsilon \label{eq:cvar_radon_nikodym} \\
    & \inp{1}{\nu} = 1. \label{eq:cvar_prob}
\end{align}

    \end{subequations}
\end{lem}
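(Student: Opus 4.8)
The plan is to prove the claimed identity by sandwiching the supremum value of program \eqref{eq:cvar_abscont} between $\mathrm{ES}_\epsilon(V)$ from above and below, treating the two bounds separately. The upper bound is a weak-duality argument that leans on the parametric formula \eqref{eq:cvar_param}, and the lower bound is obtained by exhibiting an explicit feasible measure $\nu^\star$, supported on the upper $\epsilon$-tail of $\psi$, whose objective value meets the upper bound so that the supremum is in fact attained.

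For the upper bound, I would take any $\nu$ feasible for \eqref{eq:cvar_abscont} and any $\lambda \in \R$. Using $\inp{1}{\nu} = 1$ from \eqref{eq:cvar_prob}, write $\inp{\mathrm{id}_\R}{\nu} = \lambda + \inp{\mathrm{id}_\R - \lambda}{\nu}$ and bound the second term by $\inp{(\mathrm{id}_\R - \lambda)_+}{\nu}$ since $s - \lambda \le (s-\lambda)_+$ pointwise. The density constraint \eqref{eq:cvar_radon_nikodym} together with the nonnegativity of $(\cdot)_+$ gives $\inp{(\mathrm{id}_\R - \lambda)_+}{\nu} = \inp{(\mathrm{id}_\R - \lambda)_+ \tfrac{d\nu}{d\psi}}{\psi} \le \tfrac{1}{\epsilon}\,\mathbb{E}[(V-\lambda)_+]$. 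Hence $\inp{\mathrm{id}_\R}{\nu} \le \lambda + \tfrac{1}{\epsilon}\mathbb{E}[(V-\lambda)_+]$ for every $\lambda$; minimizing the right side over $\lambda$ and invoking \eqref{eq:cvar_param} yields $\inp{\mathrm{id}_\R}{\nu} \le \mathrm{ES}_\epsilon(V)$, and taking the supremum over feasible $\nu$ proves the $\le$ direction. Finiteness of the objective is guaranteed by the finite first moment of $V$, since $\inp{\abs{\mathrm{id}_\R}}{\nu} \le \tfrac{1}{\epsilon}\mathbb{E}\abs{V}$.

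For the lower bound I would construct the optimizer directly. Set $q = \mathrm{VaR}_\epsilon(V)$ and define $\nu^\star$ to have Radon--Nikodym density $\tfrac{1}{\epsilon}$ with respect to $\psi$ on $\{s > q\}$, plus an atom at $q$ of mass $1 - \tfrac{1}{\epsilon}\mathbb{P}(V > q)$. Feasibility requires the mass-accounting facts $\mathbb{P}(V > q) \le \epsilon \le \mathbb{P}(V \ge q)$, which follow from definition \eqref{eq:var_center} by continuity of measure from above and below; the first inequality makes the atom mass nonnegative and gives $\inp{1}{\nu^\star}=1$, while the second guarantees the atom density $\tfrac{1}{\mathbb{P}(V=q)}\bigl(1 - \tfrac{1}{\epsilon}\mathbb{P}(V>q)\bigr)$ does not exceed $\tfrac{1}{\epsilon}$, so \eqref{eq:cvar_radon_nikodym} holds. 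A short computation then shows $\inp{\mathrm{id}_\R}{\nu^\star} = q + \tfrac{1}{\epsilon}\mathbb{E}[(V-q)_+]$, which is $\ge \mathrm{ES}_\epsilon(V)$ by \eqref{eq:cvar_param}. Combined with the upper bound this forces equality throughout, so the supremum is attained at $\nu^\star$ and $q$ is a minimizer in \eqref{eq:cvar_param}.

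The main obstacle is the careful treatment of the VaR threshold $q$ when $\psi$ has an atom there: the density cap $1/\epsilon$ may be reached strictly inside the tail $\{s > q\}$ yet leave residual probability that must be placed on $\{q\}$ without violating \eqref{eq:cvar_radon_nikodym}. The asymmetry between $\mathbb{P}(V \ge q)$ and $\mathbb{P}(V > q)$ is exactly what the sandwich exploits, and verifying that the residual atom mass lands in the admissible window $[0, \tfrac{1}{\epsilon}\mathbb{P}(V=q)]$ is the delicate part; in the atomless case both inequalities collapse to $\mathbb{P}(V>q)=\epsilon$, the atom vanishes, and $\nu^\star$ reduces to the normalized restriction of $\psi$ to the upper tail, recovering the intuitive \ac{CVAR} picture.
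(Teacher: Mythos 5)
Your proof is correct, but note that the paper does not actually prove this lemma at all: it is stated as a citation to Equation 5.5 of \cite{follmer2010convex}, so your argument supplies a self-contained derivation where the paper relies on an external reference. Your route --- a weak-duality upper bound obtained by testing any feasible $\nu$ against the Rockafellar--Uryasev parametric formula \eqref{eq:cvar_param}, followed by explicit construction of an optimizer $\nu^\star$ with density $1/\epsilon$ on $\{s > q\}$ plus a compensating atom at $q = \mathrm{VaR}_\epsilon(V)$ --- is sound in every step I checked: the sandwich $\mathbb{P}(V>q) \le \epsilon \le \mathbb{P}(V\ge q)$ does follow from \eqref{eq:var_center} by monotone continuity of measure, it guarantees both that the residual atom mass is nonnegative and that $\mathbb{P}(V=q)>0$ whenever that mass is positive (so absolute continuity is not violated), and the density cap at the atom reduces exactly to $\epsilon \le \mathbb{P}(V\ge q)$. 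Your construction also generalizes the paper's own Remark giving $\nu^*$ in \eqref{eq:nu_explicit}, which the paper only states for the non-atomic case. One small observation: by anchoring the value to \eqref{eq:cvar_param} rather than to Definition \ref{defn:cvar}, you have implicitly chosen the right normalization --- the expression in \eqref{eq:cvar} can differ from the parametric value by $\tfrac{q}{\epsilon}(\mathbb{P}(V\ge q)-\epsilon)$ when $\psi$ has an atom at $q$, so your proof establishes equality of the measure program with the parametric (standard) Expected Shortfall, which is the statement the cited reference actually makes.
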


% {Matt: what is $q$, what is $\nu$?}
The objective in \eqref{eq:cvar_abscont_obj} is the mean of $\nu$.
Equation \eqref{eq:cvar_abscont_def} imposes that ${\nu}$ is absolutely continuous with respect to ${\psi}$. {From equation \eqref{eq:cvar_radon_nikodym}} ${\nu}$ possesses a Radon-Nikodym derivative  that has value $\leq 1/\epsilon$ at {any realization}. Equation \eqref{eq:cvar_prob} enforces that ${\nu}$ is a probability measure. 

\begin{rmk}
    The equation in \eqref{eq:cvar_abscont} is modified from \cite{follmer2010convex} to possess a supremization objective and to explicitly include the mass constraint \eqref{eq:cvar_prob}.
\end{rmk}

\begin{rmk}
When $\mathrm{VaR}_\epsilon({V})$ is not an atom of ${\psi}$, an analytical expression may be developed for ${\nu}$ solving \eqref{eq:cvar_abscont} as:
%\begin{align}
%    \psi^*: \ \frac{d\psi^*}{d\xi}(q) = \begin{cases} 1/\epsilon & q \geq VaR_{\epsilon}(\xi) \\ 0 & \text{Else}.\end{cases}
%\end{align}
\begin{align}
    \nu^* = (1/\epsilon) \, I_{[\mathrm{VaR}_\epsilon(V),\infty)} \; \psi. \label{eq:nu_explicit}
\end{align}

Similar principles may be used to derive ${\nu^*}$ when ${\psi}$ has atomic components, but this may lead to splitting an atom.
\end{rmk}

Figure \ref{fig:es_bell} summarizes this subsection, with an example where $\psi$ is the unit normal distribution $V$ ($\mathbb{E}[V] = 0$, $\mathbb{E}[V^2] = 1$).  The blue curve is the probability density of $V$. The black area has a mass of $\epsilon = 0.1$, and the left edge of the black area is $\textrm{VaR}_{0.1}(V) = 1.2819$. The red curve is $\epsilon \nu^* = I_{[\mathrm{VaR}_\epsilon(V),\infty)} \; \psi$ from \eqref{eq:nu_explicit}. The green dotted line is the $\textrm{ES}_{0.1}(V) = 1.7550$. The \ac{VP} and Cantelli bounds are $\textrm{VP}_{0.1}(V)=1.8559$ and $\textrm{VC}_{0.1}(V) = 3$ respectively.
\begin{figure}[h]
    \centering
    \includegraphics[width=0.7\linewidth]{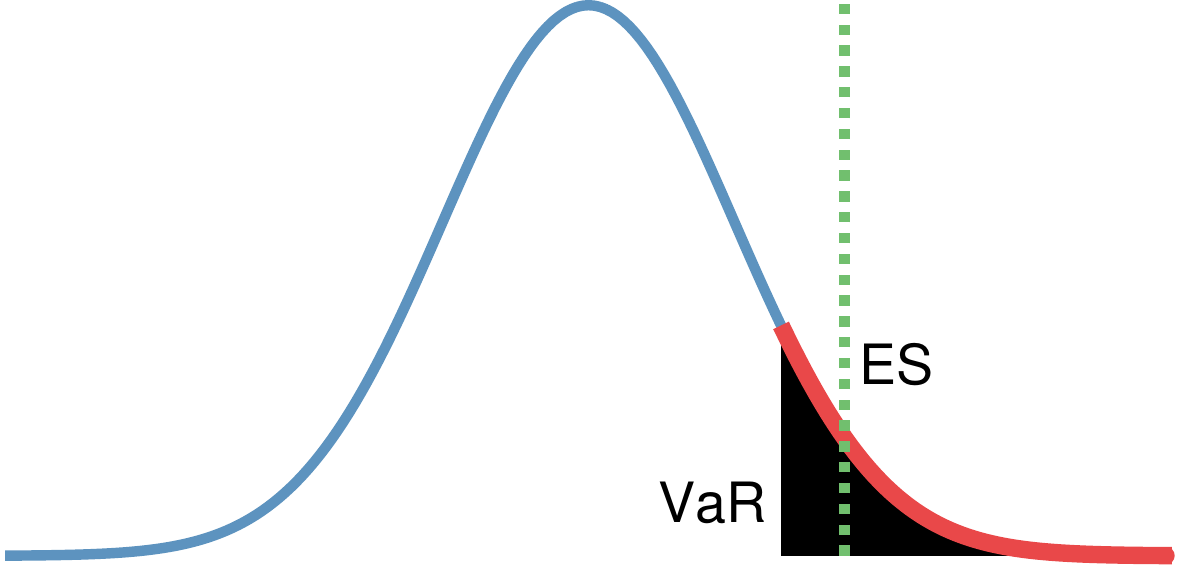}
    \caption{\ac{VAR} and \ac{CVAR} of a unit normal distribution at $\epsilon = 0.1$}
    \label{fig:es_bell}
\end{figure}

\subsection{Stochastic Processes and Occupation Measures}
\label{sec:prelim_sde}

Let $\{\mu_t\} \in \Mp{X}$ be a time-indexed sequence of probability distributions. Define $\mathcal{T}_\tau$ as a time-shifting (Feller) semigroup operator acting as $\mathcal{T}_\tau \mu_t= \mu_{t+\tau}$. The \textit{generator} $\Lie_\tau$ of a stochastic processes associated with the distributions $\{\mu_t\}$ is a linear operator satisfying (for any test function $v(t, x) \in C([0, T] \times X)$ and in the domain of $\Lie$):
\begin{equation}
    \Lie_\tau v= \lim_{\tau' \rightarrow \tau} (\inp{v(t+\tau', x)}{\mu_{t+\tau'}}- \inp{v(t,x)}{\mu_t})/\tau'. \label{eq:generator_lim}
\end{equation}

A discrete-time Markov stochastic process with parameter distribution $\xi(\lambda) \in \Mp{\Lambda}$ and time-step $\Delta t> 0$ has a law and associated generator of 
\begin{align}
    x[t+\Delta t] &=  f(t, x[t], \lambda[t]), \qquad \lambda[t] \sim \xi \\
    \Lie_{\Delta t} v &= \left(\int_{\Lambda}v(t+\Delta t, f(t, x, \lambda)) d \xi(\lambda) - v(t, x)\right)/\Delta t. \label{eq:generator_discrete}
\end{align}
The domain of $\Lie_{\Delta t}$ in \eqref{eq:generator_discrete} is $\cs = C([0, T] \times X)$.

\ito \acp{SDE} are the unique class of continuous-time stochastic processes that are nonanticipative, have independent increments, and possess continuous sample paths \cite{oksendal2003stochastic}. Every \ito \ac{SDE} has a definition in terms of a (possibly nonunique) drift function $f$, a diffusion function $g$, and an $n$-dimensional Wiener process $W$ as 
\begin{equation}
\label{eq:sde}
    dx = f(t, x) dt + g(t, x) d{W}.
\end{equation}

This paper will involve stochastic trajectories evolving in a compact set $X$, starting within an initial set $X_0$ at time $0$. Letting $\tau_X$ be a stopping time (\ac{RV}) associated with the first touch of the boundary $\partial X$, the strong \ac{SDE} solution of \eqref{eq:sde} in times $t \in [0, T]$ starting at an initial point $x(0)\in X_0$ is
\begin{equation}
\label{eq:sde_sol}
    x(t) = x(0) + \int_{t=0}^{\tau_X \wedge T} f(t, x) dt + \int_{t=0}^{\tau_X \wedge T} g(t, x) d{W}.
\end{equation}

Strong solutions of \eqref{eq:sde_sol} (sequence of state-probability-distributions $\{\mu_t\}$ describing $x(t)$) are unique if there exists $C, D>0$ such that the following Lipschitz and Growth conditions hold for all $(t, x, x') \in [0, T] \times X^2$ \cite{oksendal2003stochastic}:
 \begin{align}
    D \norm{x-x'}_2 &\geq \norm{f(t,x)-f(t,x')}_2 + \norm{g(t, x)-g(t,x')}_2  \nonumber \\
     C (1+\norm{x}_2) & \geq \norm{f(t,x)}_2 + \norm{g(t, x)}_2. \label{eq:lip_growth}
 \end{align}
These Lipschitz and Growth conditions will be satisfied if $X$ is compact and $(f, g)$ are locally Lipschitz. The generator associated to \eqref{eq:sde} with domain $C^{1, 2}([0, T] \times X)$ is
\begin{equation}
\label{eq:lie}
    \Lie v(t, x) = \partial_t v + f(t, x) \cdot \nabla_x v + \frac{1}{2} g(t, x)^T \left(\nabla^2_{xx}v\right)  g(t, x).
\end{equation}
% We note that L\'{e}vy processes are Feller continuous-time stochastic processes that are nonanticipative and have independent increments
In the rest of this paper, we denote the domain of the generator $\Lie$ by $\cs$, and will refer to stochastic processes by their generators ($x(t)$ or $\{\mu_t\}$ satisfy the stochastic process of $\Lie$ in time $T \wedge \tau_X$).

Let $\mu_0 \in \Mp{X_0}$ be an initial distribution, $t' \in [0, T]$ be a terminal time, and $\tau = t' \wedge \tau_X$ be an associated stopping time. The occupation measure $\mu \in \Mp{[0, T] \times X}$ and stopping measure $\mu_\tau \in \Mp{[0, T] \times X}$ of the stochastic processes $\Lie$ w.r.t. $\mu_0$ is $\forall A \subseteq [0, T], B \subseteq X$:
\begin{subequations}
\begin{align}
    \label{eq:avg_free_occ}
    \mu(A \times B) &= \int_{X_0} \int_{t=0}^\tau  I_{A \times B}\left(t, x(t \mid x_0)\right) dt \, d\mu_0(x_0) \\
    \label{eq:avg_free_stop}
    \mu_\tau(A \times B) &= \int_{X_0} I_{A \times B}\left(\tau, x(\tau \mid x_0)\right) \, d\mu_0(x_0).
\end{align}
\end{subequations}

The measures $(\mu_0, \mu, \mu_\tau)$ together obey a martingale relation \cite{rogers2000diffusions}:
\begin{align}
\label{eq:dynkin_strong}
    \inp{v}{\mu_\tau} &= \inp{v(0, x)}{\mu_0(x)} + \inp{\Lie v}{\mu} & & \forall v \in \cs,
\end{align}
which can be equivalently expressed in shorthand form as
\begin{align}
    \mu_\tau &= \delta_{0} \otimes \mu_0 + \Lie^\dagger \mu. \label{eq:dynkin}
\end{align}

The martingale relation in \eqref{eq:dynkin} is known as Dynkin's formula \cite{dynkin1965markov} when $\Lie$ is the generator for \iac{SDE}. A \textit{relaxed occupation measure} is a tuple $(\mu_0, \mu, \mu_\tau)$ satisfying \eqref{eq:dynkin} with $\inp{1}{\mu_0} = 1$.

An optimal stopping problem to maximize the expectation of the reward $p(x)$ along trajectories of $\Lie$ is $P^* = \sup_{t' \in [0, T]} \inp{p(x)}{\mu_{t'}(x)}$. 
This stopping problem may be upper-bounded by an infinite-dimensional \ac{LP} in measures
\begin{subequations}
\label{eq:peak_meas}
\begin{align}
p^* = & \ \sup \quad \inp{p}{\mu_\tau} \label{eq:peak_meas_obj} \\
    & \mu_\tau = \delta_0 \otimes\mu_0 + \Lie^\dagger \mu \label{eq:peak_meas_flow}\\
    & \inp{1}{\mu_0} = 1 \label{eq:peak_meas_prob}\\
    & \mu, \mu_\tau \in \Mp{[0, T] \times X}.  \notag
\end{align}
\end{subequations}

The upper-bound is tight $(p^* = P^*)$ when $p$ is continuous, $[0, T] \times X$ is compact, and closure conditions hold on the generator $\Lie$ \cite{cho2002linear} (to be reviewed in Section \eqref{sec:assum}). \ac{SDE} trajectories under Lipschitz and Growth \eqref{eq:lip_growth} conditions will satisfy these requirements.

\section{Chance-Peak Problem Statement}
\label{sec:problem_statement}
% The main theory. Display the chance-peak program in terms of occupation measures.

This section presents the problem statement for the chance-peak problem, and will formulate the tail-bound and \ac{CVAR} upper-bounding programs to peak-\ac{VAR} estimation.

% This section will present the problem statement of Peak Value-at-Risk Estimation. It will also list the tail-bound and \ac{CVAR} upper-bound programs to the chance-peak \ac{VAR} problem.

% and will also derive the infinite-dimensional \ac{SOCP} to upper bound the chance-peak quantile statistic.

% \subsection{Problem Statement}

% Let $\epsilon \in [0, 1]$ be a value for the quantile statistic, $X$ be a compact set, $X_0 \subseteq X$ be a set of initial conditions, and \eqref{eq:sde_sol} be the solution to an \ac{SDE} evolving from $x(0) \in X_0$ that remains within $X$ until it stops. For a given initial probability distribution $\mu_0 \in \Mp{X_0}$, and for all $t \in [0, T]$, let $x(t)$ be the stochastic process of \eqref{eq:sde_sol} at time $t$, and let $\mu_t \in \Mp{X}$ be its probability distribution (with $x(t)$ stopping at $\partial X$). 
\subsection{Assumptions}
\label{sec:assum}

We will require the following assumptions:
% The following assumptions will be posed throughout this paper,
\begin{itemize}
    \item[A1] The spaces $[0, T]$ and $X$ are compact.
    \item[A2] Trajectories stop upon their first contact with $\partial X$.
    \item[A3] The state function $p$ is continuous on $X$.
    \item[A4] The initial measure $\mu_0$ satisfies $\inp{1}{\mu_0} = 1$ and $\supp{\mu_0} \subseteq X$.
    \item[A5] The set $\cs = \textrm{dom}(\Lie) \subset \textrm{Hom}(C([0, T] \times X), C([0, T] \times X))$ contains $1 \in \cs$ with $\Lie 1 = 0$.
    \item[A6] The set $\cs$ can separate points and is closed under multiplication.
    \item[A7] There exists a countable basis  $\{v_k\} \in \cs$ such that every $(v, Av)$ with $v \in \cs$ is contained in the (bounded pointwise closure of the) linear span of $\{(v_k, \Lie v_k)\}$.
\end{itemize}

Assumptions A5-A7 originate from the requirements of Condition 1 of \cite{cho2002linear}. Discrete-time Markov processes, \acp{SDE}, and L\'{e}vy processes in compact domains (A1, A2) will satisfy conditions A5-A7 \cite{cho2002linear}. Assumption A5 and \eqref{eq:generator_lim} together imply that $\Lie t = 1$.
%An \ac{SDE} will have $\kappa=1$ because $\partial_t  t = 1$, and a discrete-time process from \eqref{eq:generator_discrete} has $\kappa = \Delta t$.

% We note that generators for Markov, \ac{SDE}, and L\'{e}vy processes evolving in a compact set satisfy A1-A6.

    % \item[A4] If the 

%\subsubsection{Quantile/VaR Problem}
\subsection{VaR Problem}
\begin{prob}
\label{prob:quantile}
The chance-peak problem that maximizes the $\epsilon$-\ac{VAR} of $p$ is
% The chance-peak problem to find the $\epsilon$-VaR of  $p$ is

% P^* = & \sup_{t^* \in [0, T]}  \mathrm{VaR}_{\epsilon}(p{(x(t^*))}) \label{eq:peak_chance_obj}\\    
    
\begin{subequations}
\label{eq:peak_chance}
\begin{align}
P^* = & \sup_{t^* \in [0, T]}  \label{eq:peak_chance_obj} \ \mathrm{VaR}_{\epsilon}(p{(x(t^*))}) \\
     & \text{$x(t)$ follows $\Lie$ from $t=0$ until } \ \tau_X \wedge t^* \label{eq:peak_chance_stop}\\
     & x(0) \sim \mu_0.
    %  & VaR_{\epsilon}(\mu_{t^*})
    %  & \prb_{\mu_{t^*}}[{p(x(t)) \geq \gamma}] \geq \epsilon. \label{eq:peak_chance_prob}
\end{align}
\end{subequations}
% \quad \gamma
\end{prob}
% The pushforward $p_\# \mu_{t^*}$ from \eqref{eq:peak_chance_obj} is the univariate probability distribution of $p(x)$ at the state distribution $x \sim \mu_{t^*}$.
%  & \prb_{\mu_{t^*}}[{p(x(t)) \geq \gamma}] \geq \epsilon. \label{eq:peak_chance_prob}
%\end{thm}
%\begin{proof}
%This arises from applying Lemma \ref{lem:prob_duality}, replacing the $\beta$-infimum over every time $t \in [0, T]$ with a $\gamma$-supremum over a specific time $t^*$.
%\end{proof}
% Constraint \eqref{eq:peak_chance_prob} ensures that the probability of $p(x)$ exceeding $P^*$ at time $t^*$ is larger than or equal to $\epsilon$.

\section{Tail-Bound Program}
\label{sec:tail_bound}

This section solves the chance-peak problem \ref{prob:quantile} using tail-bounds by developing an infinite-dimensional \ac{SOCP} in measures.

\subsection{Tail-Bound Problem}

% Let $\textrm{VT}_\epsilon(V)$ refer to the Cantelli or \ac{VP} tail-bound expression in \eqref{eq:tail_bounds} as appropriate (ensuring that the \ac{VP} bound is used if and only if its $\epsilon \leq 1/6$ and unimodality conditions are satisfied). 

We define $r$ as the constant factor multiplying against $\sigma$ in the Cantelli or \ac{VP} expression of \eqref{eq:tail_bounds} as in
\begin{align}
\label{eq:tail_bounds_constant}
    {r_C} &= \sqrt{(1/\epsilon)-1} & {r_{VP}} &=  \sqrt{4/(9\epsilon)-1},
\end{align}
ensuring that the \ac{VP} bound is used if and only if its $\epsilon \leq 1/6$ and unimodality conditions are satisfied. We also use the notation $\inp{p^2}{\mu_{t^*}}$ to denote the expectation {$\mathbb{E}[p(x(t^*))^2]$}.
% Using the notation $\inp{p^2}{\mu_{t^*}}$ to denote the expectation {$\mathbb{E}[p(x(t^*))^2]$}, 
\begin{prob}
    The tail-bound program upper-bounding \eqref{eq:peak_chance} with constant $r$ is
\begin{subequations}
\label{eq:peak_chance_tail}
\begin{align}
    P^*_r = & \sup_{t^* \in [0, T]} r\sqrt{\inp{p^2}{\mu_{t^*}} - \inp{p}{\mu_{t^*}}^2}  + \inp{p}{\mu_{t^*}} \label{eq:peak_chance_tail_obj}\\     
    & \text{$x(t)$ follows $\Lie$ from $t=0$ until } \ \tau_X \wedge t^* \label{eq:peak_chance_tail_stop}\\
     & x(0) \sim \mu_0.
\end{align}
\end{subequations}
\end{prob}

\subsection{Nonlinear Measure Program}

Problem \eqref{eq:peak_chance_tail} can be converted into an infinite-dimensional nonlinear program in variables $(\mu_\tau, \mu)$, given the initial distribution $\mu_0$ and the generator $\Lie$:
\begin{subequations}
\label{eq:peak_chance_meas}
\begin{align}
    p^*_r = & \sup r\sqrt{\inp{p^2}{\mu_{\tau}} - \inp{p}{\mu_{\tau}}^2}  + \inp{p}{\mu_{\tau}} \label{eq:peak_chance_meas_obj}\\
     & \mu_\tau = \delta_0 \otimes \mu_0 + \Lie^\dagger \mu \label{eq:peak_chance_meas_lie}\\
    & \mu_\tau, \ \mu \in \Mp{[0, T] \times X}. \notag
\end{align}
\end{subequations}

\begin{thm}
\label{thm:upper_bound_nonlinear}
Programs \eqref{eq:peak_chance_meas} and \eqref{eq:peak_chance_tail} are related by $p^*_r \geq P^*_r$ under assumptions A1, A2, A4.
% Program \eqref{eq:peak_chance_meas} is an upper bound on \eqref{eq:peak_chance_tail} with $p^*_r \geq P^*_r$ under A1-A4.
\end{thm}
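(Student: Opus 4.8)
The plan is to prove this relaxation inequality by the standard ``easy direction'' argument: every admissible terminal time $t^*$ for the stopping program \eqref{eq:peak_chance_tail} induces a pair of measures that is feasible for the measure program \eqref{eq:peak_chance_meas} and attains exactly the same objective value. Since $p^*_r$ is a supremum over all feasible $(\mu, \mu_\tau)$---a set that contains all of these trajectory-induced pairs---taking the supremum over $t^* \in [0,T]$ yields $p^*_r \ge P^*_r$. No reverse (no-relaxation-gap) direction is needed here, which is why only A1, A2, A4 are invoked.

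First I would fix an arbitrary $t^* \in [0,T]$ and set the stopping time $\tau = \tau_X \wedge t^*$. From the strong solution of the process started at $x(0) \sim \mu_0$ and stopped at $\tau$, I would build the occupation measure $\mu$ and stopping measure $\mu_\tau$ exactly as in \eqref{eq:avg_free_occ}--\eqref{eq:avg_free_stop}. These are nonnegative Borel measures; I would check that they are supported in $[0,T] \times X$ (using A2, so that $x(t) \in X$ for all $t \le \tau$, together with $\tau \le t^* \le T$ from A1), that $\mu$ has finite mass bounded by $T$ (the occupation time of any trajectory being at most $T$), and that $\mu_\tau$ is a probability measure with $\inp{1}{\mu_\tau} = \inp{1}{\mu_0} = 1$ by A4 and the fact that $\tau \le T$ guarantees the trajectory stops within the horizon almost surely.

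Next I would verify feasibility and objective matching. Feasibility of $(\mu, \mu_\tau)$ for \eqref{eq:peak_chance_meas_lie} is precisely the martingale/Dynkin relation \eqref{eq:dynkin_strong}--\eqref{eq:dynkin}, which holds for the trajectory-induced measures; thus the constraint is satisfied for every $v \in \cs$. For the objective, the key observation is that $p$ and $p^2$ are functions of the state $x$ only, so integrating them against the time--state measure $\mu_\tau$ reduces to integrating against the state marginal of $\mu_\tau$, which is exactly the law of the stopped state $x(\tau)$, i.e. the distribution $\mu_{t^*}$ appearing in \eqref{eq:peak_chance_tail_obj}. Hence $\inp{p}{\mu_\tau} = \inp{p}{\mu_{t^*}}$ and $\inp{p^2}{\mu_\tau} = \inp{p^2}{\mu_{t^*}}$, and since $\mu_\tau$ is a probability measure the argument $\inp{p^2}{\mu_\tau} - \inp{p}{\mu_\tau}^2$ of the square root is the nonnegative variance of the random variable $p(x(\tau))$. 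Therefore the objective \eqref{eq:peak_chance_meas_obj} evaluated at $(\mu, \mu_\tau)$ equals the stopping objective \eqref{eq:peak_chance_tail_obj} at this $t^*$. Taking the supremum over $t^* \in [0,T]$ and bounding below the supremum defining $p^*_r$ gives $p^*_r \ge P^*_r$.

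The only genuinely delicate points are bookkeeping rather than analysis: confirming that the trajectory-induced $\mu_\tau$ is a probability measure under just A1, A2, A4 (which rests on $\tau = \tau_X \wedge t^* \le T$, so that stopping occurs inside the horizon and no mass escapes) and that the martingale relation \eqref{eq:dynkin_strong} applies to these measures as asserted in the preliminaries. The harder reverse inequality---establishing that there is no relaxation gap---is a separate statement requiring the continuity and closure assumptions A3 and A5--A7, and is not part of this claim.
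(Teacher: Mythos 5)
Your proposal is correct and follows essentially the same route as the paper's proof: for each admissible terminal time $t^*$, construct the trajectory-induced occupation and stopping measures, note they satisfy the martingale constraint \eqref{eq:peak_chance_meas_lie} and reproduce the objective \eqref{eq:peak_chance_tail_obj}, and conclude by taking the supremum. Your version simply spells out the bookkeeping (support, mass, probability-measure property of $\mu_\tau$) that the paper leaves implicit.
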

\begin{proof}
Let $t^* \in [0, T]$ be a terminal time with stopping time of $\tau^* = t^* \wedge \tau_X$. We can construct measures $(\mu, \mu_\tau)$ to satisfy \eqref{eq:peak_chance_meas_lie} from the data in $(t^*, \Lie, \mu_0)$. Specifically, let $\mu$ be the occupation measure of the stochastic process $\Lie$ starting from $\mu_0$ until time $\tau^*$, and let $\mu_\tau$ be the time-$\tau^*$ state distribution. The allowed stopping times $t^* \in [0, T]$ for \eqref{eq:peak_chance_tail} maps in a one-to-one manner on the feasible set of constraints \eqref{eq:peak_chance_meas}, affirming that $p^*_r \geq P^*_r$.
% Let $t^*$ be a stopping time in $[0, T]$, and let $x_0 \in X_0$ be an initial condition. Measures $(\mu_0, \mu, \mu_\tau)$ that satisfy \eqref{eq:peak_chance_meas_lie} may be constructed from this $(t^*, x_0)$ by $\mu_{t^*}$ as the the state distribution of  \eqref{eq:sde_sol} at time $t^*$ given $\mu_0$, and $\mu$ as the occupation measure in \eqref{eq:avg_free_occ} associated to this \ac{SDE} trajectory with distribution $\mu_0$. Because the feasible set to constraint \eqref{eq:peak_chance_meas_lie} contains measures induced by all possible provided \ac{SDE} trajectories starting from $\mu_0$, it holds that $p^*_r \geq P^*_r$.
\end{proof}

\begin{rmk}
A worst-case tail-bound $p^*_r$ for \eqref{eq:peak_chance_meas} can be found from an initial set $X_0 \subseteq X$ by letting the initial condition $\mu_0$ be  an optimization variable with the constraints $\mu_0 \in \Mp{X_0}$ and $\inp{1}{\mu_0} = 1$.
% The initial distribution $\mu_0\in \Mp{X_0}$ may be optimized to find a supremal $p^*_r$ over all probability distributions in $X_0$ by adding $\mu_0$ as a variable and adding the constraint $\inp{\mu_0}{1} = 1$ to .
\end{rmk}

\begin{lem}
\label{lem:no_relaxation_gap}
Under assumptions (A1, A2, A4-7), to every $(\mu_\tau, \mu)$ obeying \eqref{eq:var_meas_lie_soc}  there exists a stochastic process trajectory \eqref{eq:peak_chance_tail_stop} starting at $\mu_0$ and terminating at $\mu_\tau$ with an occupation measure of $\mu$.
\end{lem}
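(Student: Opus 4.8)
The plan is to establish the converse of Theorem~\ref{thm:upper_bound_nonlinear}: whereas that theorem sent every genuine trajectory to a feasible measure pair, here I must recover an honest stochastic process and stopping time from an arbitrary feasible pair $(\mu_\tau, \mu)$. The realizability of such a pair is governed entirely by the martingale (Dynkin) relation contained in \eqref{eq:var_meas_lie_soc}, namely $\inp{v}{\mu_\tau} = \inp{v(0,x)}{\mu_0} + \inp{\Lie v}{\mu}$ for all $v \in \cs$, equivalently $\mu_\tau = \delta_0 \otimes \mu_0 + \Lie^\dagger \mu$ as in \eqref{eq:peak_chance_meas_lie}; the auxiliary second-order-cone variables that encode the variance in the \ac{SOCP} place no further constraint on the joint law of $(\mu, \mu_\tau)$ and can be discarded for this argument. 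So the first step is to recognize $(\mu_0, \mu, \mu_\tau)$ as a solution, in the sense of \cite{cho2002linear}, of the adjoint formulation of the optimal stopping problem for the generator $\Lie$.

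The second step is to check that assumptions A1 and A5--A7 furnish exactly the hypotheses (Condition~1) under which \cite{cho2002linear} proves the realization (existence) theorem. Compactness of $[0,T]\times X$ comes from A1; A5 supplies $1 \in \cs$ with $\Lie 1 = 0$, so that mass and elapsed time are correctly tracked by the relation; A6 makes $\cs$ a point-separating multiplicative algebra, hence measure-determining by Stone--Weierstrass; and A7 supplies the countable basis and closed-span property that lets the martingale relation extend from the generating test functions to the whole domain of $\Lie$, pinning down the generator. With these verified, the converse direction of \cite{cho2002linear} produces a Markov process with generator $\Lie$ started from the time-$0$ marginal $\mu_0$, together with a (possibly randomized) stopping time $\tau$, whose expected occupation measure is $\mu$ and for which the law of $(\tau, x(\tau))$ equals $\mu_\tau$. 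Absorption at $\partial X$ (A2) lets $\tau$ be taken as $t^* \wedge \tau_X$, and A1 keeps the trajectory in the compact set $X$; this is precisely a trajectory of the form \eqref{eq:peak_chance_tail_stop} starting at $\mu_0$, with stopping measure $\mu_\tau$ and occupation measure $\mu$.

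The main obstacle is the realization step itself, which I would import rather than reprove. Producing an honest process and stopping rule from measures that satisfy only the weak martingale relation is a superposition/mimicking result, and it requires well-posedness of the martingale problem for $\Lie$ so that the generator determines the law uniquely. In the \ac{SDE} case this well-posedness follows from the Lipschitz and Growth conditions \eqref{eq:lip_growth}; for discrete-time Markov and \levy processes the analogous well-posedness is likewise available in compact domains. Because \cite{cho2002linear} establishes the superposition under precisely the closure conditions A5--A7, the residual work is confirming that these hypotheses transfer to the present setting and that the \ac{SOC} auxiliary variables are immaterial to realizability, rather than reconstructing the superposition principle from scratch.
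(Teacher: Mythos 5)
Your proposal is correct and follows essentially the same route as the paper: the paper's proof is a one-line appeal to Theorem 3.3 of \cite{cho2002linear} under A1, A2, A5--A7, which is exactly the realization result you import after verifying that the feasible pair satisfies the martingale relation and that the \ac{SOC} auxiliary variables impose no additional constraint. Your more detailed verification of Condition 1 of \cite{cho2002linear} is a faithful expansion of what the paper leaves implicit.
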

\begin{proof}
    This lemma holds by Theorem 3.3 of \cite{cho2002linear} (under A1, A2, A5-7; noting that $[0, T] \times X$ is a complete metric space).
\end{proof}

\begin{cor}
    The combination of Theorem \ref{thm:upper_bound_nonlinear} and Lemma \ref{lem:no_relaxation_gap} implies that $p^*_r = P^*_r$ under assumptions A1-A7.
\end{cor}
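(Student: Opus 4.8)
The plan is to prove the two inequalities $p^*_r \geq P^*_r$ and $p^*_r \leq P^*_r$ separately and then combine them. The first is already in hand: Theorem \ref{thm:upper_bound_nonlinear} gives $p^*_r \geq P^*_r$ under A1, A2, A4 by exhibiting, for each admissible terminal time $t^*$, a feasible pair $(\mu, \mu_\tau)$ of \eqref{eq:peak_chance_meas} whose objective equals the tail-bound objective at $t^*$. Only the reverse inequality requires work, and it is exactly the role of Lemma \ref{lem:no_relaxation_gap}.

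For $p^*_r \leq P^*_r$ I would argue at the level of feasible points. First observe that the measure-program objective \eqref{eq:peak_chance_meas_obj} depends on $\mu_\tau$ only through the two linear functionals $\inp{p}{\mu_\tau}$ and $\inp{p^2}{\mu_\tau}$, that is, through the mean and second moment (hence the variance $\sigma^2 = \inp{p^2}{\mu_\tau} - \inp{p}{\mu_\tau}^2$) of $p$ under the stopping distribution. Now take an arbitrary $(\mu, \mu_\tau)$ feasible for \eqref{eq:peak_chance_meas}, i.e.\ satisfying \eqref{eq:peak_chance_meas_lie} with $\mu_0$ a probability measure. By Lemma \ref{lem:no_relaxation_gap}, under A1, A2, A4--A7 there is a genuine stochastic-process trajectory of the form \eqref{eq:peak_chance_tail_stop}, starting from $\mu_0$, terminating at $\mu_\tau$, and having occupation measure $\mu$. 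This trajectory is admissible for the tail-bound program \eqref{eq:peak_chance_tail}, and because its stopped state distribution is precisely $\mu_\tau$, the mean and variance of $p$ along it coincide with $\inp{p}{\mu_\tau}$ and $\inp{p^2}{\mu_\tau} - \inp{p}{\mu_\tau}^2$. Consequently the tail-bound objective it realizes equals the value of \eqref{eq:peak_chance_meas_obj} at $(\mu, \mu_\tau)$, which is therefore $\leq P^*_r$. Since the feasible pair was arbitrary, taking the supremum over all feasible $(\mu, \mu_\tau)$ yields $p^*_r \leq P^*_r$.

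Combining the two inequalities gives $p^*_r = P^*_r$; the hypotheses are simply the union A1--A7 of those needed for Theorem \ref{thm:upper_bound_nonlinear} (A1, A2, A4) and Lemma \ref{lem:no_relaxation_gap} (A1, A2, A4--A7). No attainment of the suprema is needed, since the argument compares objective values point by point. The one delicate point is that the correspondence must preserve the \emph{nonlinear}, variance-dependent objective rather than a linear one; it does, because both programs read the objective off the same two moments of $\mu_\tau$, and Lemma \ref{lem:no_relaxation_gap} guarantees the recovered trajectory reproduces $\mu_\tau$ as its stopping distribution. I therefore expect the main effort to lie not in this corollary, which is a direct synthesis, but in the already-cited ingredient of Lemma \ref{lem:no_relaxation_gap}, whose recovery step must produce an admissible trajectory matching the full measure $\mu_\tau$ and not merely some distribution sharing its first moment.
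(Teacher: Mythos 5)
Your proposal is correct and is exactly the argument the paper intends: the corollary is stated without a written proof, and your two-inequality synthesis (Theorem \ref{thm:upper_bound_nonlinear} for $p^*_r \geq P^*_r$, Lemma \ref{lem:no_relaxation_gap} for the converse, with the observation that the nonlinear objective is preserved because it depends on $\mu_\tau$ only through $\inp{p}{\mu_\tau}$ and $\inp{p^2}{\mu_\tau}$) is the natural unpacking of ``the combination implies.'' You also correctly locate the real mathematical burden in Lemma \ref{lem:no_relaxation_gap} (i.e.\ in Theorem 3.3 of the cited reference) rather than in the corollary itself.
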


\subsection{Measure Second-Order Cone Program}

This subsection will demonstrate how the  nonlinear program in measures \eqref{eq:peak_chance_meas} may be equivalently expressed as an infinite-dimensional convex measure \ac{SOCP}. We will use an \ac{SOC} representation of the square-root to accomplish this conversion through the following lemma:

% The nonlinear measure program \eqref{eq:peak_chance_meas} may be recast as an infinite-dimensional convex \ac{SOCP}.% with a finite-dimensional \ac{SOC} constraint.
\begin{lem}
\label{lem:sqrt}
Define the objective \eqref{eq:peak_chance_meas_obj} as $J_r(a, b) = r \sqrt{b - a^2} + a$ under the substitutions $a=\inp{p}{\mu_\tau}$ and $b = \inp{p^2}{\mu_\tau}$. Given any convex set 
${K} \in \R \times \R_+$ with $(a, b) \in {K}$, the subsequent programs will possess the same optimal value:
\begin{align}
    &\sup_{(a, b) \in {K}} a + r \sqrt{b-a^2}  \\
    &\sup_{(a, b) \in {K}, \ {c} \in \R} a + r { \, c}: \ ([1-b, 2{c}, 2a],  1+b) \in {\mathbb{L}}^3. \label{eq:square_root_socp}
\end{align}

% Let $J_r(a, b) = r \sqrt{b - a^2} + a$ be the objective  \eqref{eq:peak_chance_meas_obj} with $a=\inp{p}{\mu_\tau}$ and $b = \inp{p^2}{\mu_\tau}$. For any convex set ${K} \in \R \times \R_+$ with $(a, b) \in {K}$, the following pair of programs have the same optimal value (in which ${\mathbb{L}}^3 = \{([y_1, y_2, y_3], s) \in \R^3 \times \R_+ \mid \norm{y}_2 \leq s \}$  is \iac{SOC} cone):
% \begin{align}
%     &\sup_{(a, b) \in {K}} a + r \sqrt{b-a^2}  \\
%     &\sup_{(a, b) \in {K}, \ {c} \in \R} a + r { \, c}: \ ([1-b, 2{c}, 2a],  1+b) \in {\mathbb{L}}^3. \label{eq:square_root_socp}
% \end{align}
\end{lem}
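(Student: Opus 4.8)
The plan is to recognize the second-order-cone constraint in \eqref{eq:square_root_socp} as a lifted (rotated-cone) representation of the hyperbolic inequality $c^2 \le b - a^2$, and then to eliminate the auxiliary variable $c$ by maximizing it out. First I would unpack the membership $([1-b,\,2c,\,2a],\,1+b) \in \mathbb{L}^3$ using the definition $\mathbb{L}^3 = \{(y,s): s \ge \norm{y}_2\}$, which reads
\[
1 + b \ge \sqrt{(1-b)^2 + (2c)^2 + (2a)^2}.
\]
Because $(a,b) \in K \subseteq \R \times \R_+$ we have $b \ge 0$, so the left-hand side $1+b$ is strictly positive and both sides may be squared without changing the inequality. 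Expanding and using $(1+b)^2 - (1-b)^2 = 4b$, the terms cancel to leave $4b \ge 4a^2 + 4c^2$, i.e. the equivalent hyperbolic constraint
\[
c^2 \le b - a^2.
\]

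The second observation is that this constraint is feasible in $c$ if and only if $b - a^2 \ge 0$, which is precisely the domain on which the objective $J_r(a,b) = a + r\sqrt{b-a^2}$ of the first program is real-valued. Hence the two programs share the same effective feasible region $\{(a,b) \in K : b \ge a^2\}$, and for any such fixed pair the admissible lifting values are exactly $c \in [-\sqrt{b-a^2},\,\sqrt{b-a^2}]$.

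Finally I would carry out the inner maximization over $c$. Since the coefficient $r$ (either $r_C$ or $r_{VP}$ from \eqref{eq:tail_bounds_constant}) is nonnegative, the objective $a + rc$ is nondecreasing in $c$, so its supremum over the feasible interval is attained at $c = \sqrt{b-a^2}$, giving $a + r\sqrt{b-a^2} = J_r(a,b)$. Thus for every $(a,b)\in K$ the value of \eqref{eq:square_root_socp} restricted to that pair equals the first program's objective, and taking the supremum over $(a,b) \in K$ shows that the two programs attain the same optimal value.

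The argument is a routine conic manipulation rather than a deep result, so I do not anticipate a serious obstacle; the only points requiring care are the sign conditions that justify squaring (supplied by $b \ge 0$) and the nonnegativity of $r$ that makes the inner maximization select the $+\sqrt{\cdot}$ branch. I would also emphasize that the rotated-cone reduction automatically enforces $b \ge a^2$, so no separate variance-nonnegativity condition need be imposed on $K$ for the equivalence to hold.
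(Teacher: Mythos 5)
Your proposal is correct and follows essentially the same route as the paper: both unpack the $\mathbb{L}^3$ membership into the hyperbolic constraint $c^2 + a^2 \le b$ via the $(1+b)^2 - (1-b)^2 = 4b$ cancellation, and then use $r \ge 0$ to argue the optimal $c$ sits at $+\sqrt{b-a^2}$. You are somewhat more careful than the paper's own proof in justifying the squaring step (via $b \ge 0$) and in making the inner maximization over $c$ explicit, but these are refinements of the same argument rather than a different one.
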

\begin{proof} We introduce a new variable $c$ as in $c \leq \sqrt{b-a^2}$, which implies that ${c}^2 + a^2 \leq b $. The \ac{SOC}-equivalent form of $\sqrt{b-a^2}$ follows from \cite{alizadeh2003second, yalmip2009sqrt},
% The new variable ${c}$ is introduced under the constraint $\sqrt{b-a^2} \geq {c}$, implying that $ {c}^2 + a^2 \leq b $. The \ac{SOCP} equivalence follows from the power-representation of $\sqrt{b-a^2}$ from \cite{alizadeh2003second, yalmip2009sqrt}, with the steps of
\begin{align*}
    &([1 - b, 2{c}, 2a],  1+b) \in {\mathbb{L}}^3  \\
    \Longleftrightarrow \; & (1-b)^2 + 4({c}^2 + a^2) \leq (1+b)^2 \\
    \Longleftrightarrow \; & (1+b^2) - 2b + 4({c}^2 + a^2) \leq (1+b^2) + 2b \\
    \Longleftrightarrow \; & 4({c}^2 + a^2) \leq 4b.
\end{align*}
\end{proof}

\begin{thm}
\label{thm:upper_bound_socp}
The nonlinear program \eqref{eq:peak_chance_meas} has the same set of feasible solutions and  optimal value as the following \ac{SOCP} (given $(\mu_0, \Lie, r)$):
% An infinite-dimensional \ac{SOCP} with the same optimal value and set of feasible solutions as \eqref{eq:peak_chance_meas} given $\mu_0$ is
\begin{subequations}
\label{eq:var_meas_soc}
\begin{align}
    p^*_{r} = &\sup \quad r {c}  + \inp{p}{\mu_\tau} \label{eq:var_meas_obj_soc}\\
     & \mu_\tau = \delta_0 \otimes \mu_0 + \Lie^\dagger \mu \label{eq:var_meas_lie_soc}\\
    & {y} = [1-\inp{p^2}{\mu_\tau}, \ 2 {c}, \ 2 \inp{p}{\mu_\tau}] \label{eq:var_meas_con_soc_def}\\
    & ({y}, 1 + \inp{p^2}{\mu_\tau}) \in {\mathbb{L}}^3 \label{eq:var_meas_con_soc}\\
    & \mu, \ \mu_\tau \in \Mp{[0, T] \times X}, u \in \R, c \in \R^3.\notag
\end{align}
\end{subequations}
\end{thm}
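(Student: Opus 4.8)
The plan is to read the nonlinear program \eqref{eq:peak_chance_meas} as the maximization of the scalar objective $J_r(a,b) = r\sqrt{b-a^2}+a$ of Lemma \ref{lem:sqrt} over a convex feasible region described in the two reduced coordinates $a = \inp{p}{\mu_\tau}$ and $b = \inp{p^2}{\mu_\tau}$, and then to apply Lemma \ref{lem:sqrt} verbatim to trade the square root for the second-order-cone constraint \eqref{eq:var_meas_con_soc}.

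First I would verify the hypotheses needed to make $J_r$ well defined on the feasible set. Pairing the Dynkin relation \eqref{eq:peak_chance_meas_lie} against the constant function $1$ and using $\Lie 1 = 0$ (A5) together with $\inp{1}{\mu_0}=1$ (A4) yields $\inp{1}{\mu_\tau}=1$, so $\mu_\tau$ is a probability measure. Hence $b = \inp{p^2}{\mu_\tau} \geq 0$ because $p^2 \geq 0$ and $\mu_\tau \in \Mp{[0,T]\times X}$, and $b - a^2 = \E[p(x)^2] - \E[p(x)]^2 \geq 0$ by Cauchy--Schwarz (Jensen). Thus every feasible $(\mu,\mu_\tau)$ produces a pair $(a,b) \in \R \times \R_+$ with $b \geq a^2$, so that $\sqrt{b-a^2}$ is real.

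Next I would establish convexity of the reduced feasible set. The constraints of \eqref{eq:peak_chance_meas} are the affine Dynkin equation \eqref{eq:peak_chance_meas_lie} together with the cone memberships $\mu,\mu_\tau \in \Mp{[0,T]\times X}$; these cut out a convex set in the product of measure cones. Since $a$ and $b$ are linear functionals of $\mu_\tau$, the image $K := \{(\inp{p}{\mu_\tau}, \inp{p^2}{\mu_\tau}) : (\mu,\mu_\tau) \text{ feasible}\}$ is a convex subset of $\R \times \R_+$. Because the objective \eqref{eq:peak_chance_meas_obj} depends on the measures only through $(a,b)$, we obtain $p^*_r = \sup_{(a,b)\in K} J_r(a,b)$.

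Finally I would invoke Lemma \ref{lem:sqrt} with this convex set $K$, which replaces $\sup_{(a,b)\in K} a + r\sqrt{b-a^2}$ by $\sup_{(a,b)\in K,\, c\in\R} a + rc$ subject to $([1-b,2c,2a],1+b)\in\mathbb{L}^3$, the optimizer satisfying $c=\sqrt{b-a^2}$. Re-substituting $a=\inp{p}{\mu_\tau}$ and $b=\inp{p^2}{\mu_\tau}$ reproduces \eqref{eq:var_meas_soc} exactly. For the equality of feasible sets I would observe that the auxiliary scalar $c$ places no restriction on the measures: any feasible $(\mu,\mu_\tau)$ of \eqref{eq:peak_chance_meas} extends to a feasible triple by taking $c = \sqrt{b-a^2}$, and projecting any SOCP-feasible triple onto $(\mu,\mu_\tau)$ lands back in the nonlinear feasible set, so the two programs share the same feasible measure pairs. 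The only real subtlety --- the rest being Lemma \ref{lem:sqrt} --- is the passage to the two scalars $(a,b)$: it relies on $a,b$ being genuinely linear in $\mu_\tau$ (so the image of a convex set stays convex) and on $K \subset \R\times\R_+$ with $b \geq a^2$, so that the hypotheses of Lemma \ref{lem:sqrt} are met.
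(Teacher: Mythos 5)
Your proof is correct and follows essentially the same route as the paper: both reduce the problem to the two scalars $a = \inp{p}{\mu_\tau}$, $b = \inp{p^2}{\mu_\tau}$ and invoke Lemma \ref{lem:sqrt} to trade the square root for the \ac{SOC} constraint. You additionally verify the lemma's hypotheses (that $\mu_\tau$ is a probability measure so $b \geq a^2$, and that the reduced set $K$ is convex as the linear image of a convex feasible set), which the paper's one-line proof leaves implicit.
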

\begin{proof}
This equivalence follows from Lemma \ref{lem:sqrt} by replacing the square-root in the objective \eqref{eq:peak_chance_meas_obj}. The new optimization variables are $(\mu_\tau, \mu, u, c)$.
% This results from an application of Lemma \ref{lem:sqrt} to the objective term \eqref{eq:peak_chance_meas_obj}. The optimization variables are now $(\mu_\tau, \mu, u, z)$. %{Matt: wrong. $s$ is also an optimization variable. The best way to write our problem in a standard way would be under the form
%\begin{align*}
%    \sup & \langle c , x \rangle & \qquad && \inf & \langle y , b \rangle \\
%    & x \in K & \qquad && & A'y - c \in K^* \\
%    & Ax = b \in Y & \qquad && & y \in Y^*
%\end{align*}
%Here it yields (please double check all my computations)
%\begin{align*}
%    p^*_r = & \sup rz + \langle p, \mu_\tau\rangle & \qquad && d^*_r = & \inf \langle v(0,\cdot) , \mu_0 \rangle + y_1 + y_4 \\
%    & \mu \in \Mp{[0,T]\times X} & \qquad && & \Lie v \leq 0 \\
%    & \mu_\tau \in \Mp{[0,T]\times X}& \qquad && & v + y_1 p^2 - 2y_3 p - y_4 p^2 \geq p \\
%    & z\in\R & \qquad && & -2 y_2 = r \\
%    & q \in Q^3 & \qquad && & y \in Q^3 \\
%    & \mu_\tau - \Lie^\dagger \mu = \delta_0 \mu_0 & \qquad && & v \in C^2([0,T]\times X) \\
%    & q + [\langle p^2,\mu_\tau\rangle, -2z, -2\langle p, \mu_\tau \rangle, -\langle p^2,\mu_\tau \rangle] = [1,0,0,1] & \qquad && & y \in \R^4
%\end{align*}}
\end{proof}

\begin{cor}
The \ac{SOCP} in \eqref{eq:var_meas_soc} is convex.
\end{cor}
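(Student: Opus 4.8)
The plan is to verify the two defining ingredients of a convex maximization program: a concave (here in fact linear) objective optimized over a convex feasible set. I would first observe that, thanks to the reformulation supplied by Lemma \ref{lem:sqrt}, the objective \eqref{eq:var_meas_obj_soc} is $r c + \inp{p}{\mu_\tau}$, which is linear—hence concave—in the decision tuple $(\mu_\tau, \mu, u, c)$, since $\mu_\tau \mapsto \inp{p}{\mu_\tau}$ is a linear functional on measures. The whole point of Lemma \ref{lem:sqrt} was precisely to trade the concave-but-nonlinear square-root objective of \eqref{eq:peak_chance_meas_obj} for this linear objective at the cost of one extra second-order cone constraint, so the objective side of convexity is immediate.

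Next I would argue that the feasible set is convex by inspecting each constraint in turn. The martingale relation \eqref{eq:var_meas_lie_soc} is an affine equality: $\Lie^\dagger$ is a linear operator and $\delta_0 \otimes \mu_0$ is a fixed offset, so its solution set is an affine subspace and therefore convex. The auxiliary definition \eqref{eq:var_meas_con_soc_def} expresses $y$ as an affine function of $(\mu_\tau, c)$, because both $\mu_\tau \mapsto \inp{p}{\mu_\tau}$ and $\mu_\tau \mapsto \inp{p^2}{\mu_\tau}$ are linear functionals—well-defined since $p, p^2 \in C(X)$ under assumptions A1 and A3—and likewise $1 + \inp{p^2}{\mu_\tau}$ is affine in $\mu_\tau$.

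The crux is then the cone membership \eqref{eq:var_meas_con_soc}. I would note that $\mathbb{L}^3$ is a convex cone, and that by the previous step the map sending the decision variables to $(y,\, 1 + \inp{p^2}{\mu_\tau})$ is affine; since the preimage of a convex set under an affine map is convex, constraint \eqref{eq:var_meas_con_soc} carves out a convex set. Finally $\Mp{[0, T] \times X}$ is itself a convex cone, so the nonnegativity constraints on $(\mu, \mu_\tau)$ are convex. The feasible set is the intersection of these finitely many convex sets and is therefore convex; maximizing the linear objective over it is a convex program.

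I do not anticipate a genuine obstacle here, as convexity follows essentially by unwinding definitions once the affine-preimage structure is made explicit. The only point deserving a word of care is the well-definedness and linearity of the moment functionals $\inp{p}{\cdot}$ and $\inp{p^2}{\cdot}$ appearing in \eqref{eq:var_meas_con_soc_def}; these are secured by the continuity of $p$ on the compact set $X$ under A1 and A3, which guarantees the integrals are finite and linear in the measure argument.
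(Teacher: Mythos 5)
Your proof is correct and follows essentially the same route as the paper's, which simply notes that the objective \eqref{eq:var_meas_obj_soc} is linear in $(c,\mu_\tau)$ and that constraints \eqref{eq:var_meas_lie_soc}--\eqref{eq:var_meas_con_soc} are affine or second-order-cone constraints. Your version merely spells out the affine-preimage argument and the well-definedness of the moment functionals in more detail.
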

\begin{proof}
% The objective \eqref{eq:var_meas_obj_soc} is affine in $({c}, \mu_\tau)$.
Constraints \eqref{eq:var_meas_lie_soc}-\eqref{eq:var_meas_con_soc} are convex (\ac{SOC} for \eqref{eq:var_meas_con_soc} and affine for \eqref{eq:var_meas_lie_soc}). The objective  \eqref{eq:var_meas_obj_soc} is linear in $({c}, \mu_\tau)$.
% Constraints \eqref{eq:var_meas_lie_soc}-\eqref{eq:var_meas_con_soc} are convex (affine for \eqref{eq:var_meas_lie_soc} and \ac{SOC} for \eqref{eq:var_meas_con_soc}), ensuring convexity of \eqref{eq:var_meas_soc}.
\end{proof}

% \begin{rmk}
% Problem \eqref{eq:var_meas_soc} has an infinite-dimensional affine constraint in \eqref{eq:var_meas_lie_soc} and a finite-dimensional \ac{SOC} constraint in \eqref{eq:var_meas_con_soc}.
% \end{rmk}
% \begin{thm}
% Program \eqref{eq:peak_chance_meas} is convex
% \end{thm}
% \begin{proof}
% The affine constraints \eqref{eq:peak_chance_meas_lie} and \eqref{eq:peak_chance_meas_mass} are linear in the measures $(\mu_0, \mu, \mu_\tau)$. The objective \eqref{eq:peak_chance_meas_obj} may be represented using a function $h_r$
% \begin{equation}
%     h_r(x, y) = r \sqrt{y - x^2} + x, \label{eq:h_func}
% \end{equation}
% as $h_r(\inp{p (x)}{\mu_{\tau}}, \inp{p (x)^2}{\mu_{\tau}})$. The function $h_r(x, y)$ is concave when $y\geq 0$, and is strictly concave when $y>0$ and $r>0$. This strict concavity may be deduced from the positive-definite Hessian of $h_r$,
% \begin{equation}
%     \nabla^2 h(x, y; k) = \frac{1}{(y-x^2)^{3/2}}\begin{bmatrix} -kx^2 - k(y-x^2) & kx/2 \\ kx/2 & -k/4 \end{bmatrix}.
% \end{equation}
% \end{proof}

% \end{prob}
% In the same way that \eqref{eq:peak_meas}

% Presentation of Cantelli and VP bounds. Utilization of these bounds to define upper-bounds on the $1-\epsilon$ quantile in terms of a Measure program.

% Reformulation of the Cantelli/VP constraints into a Second-Order-Cone constraint.

\subsection{Dual Second-Order Cone Program}

The Lagrangian dual of \eqref{eq:var_meas_soc} is a program with infinite-dimensional linear constraints and a finite-dimensional SOC constraint. This dual involves a function $v(t, x) \in \cs([0, T] \times X)$ and a constant $u \in \R^3$ as variables.

We will use the following expression of \eqref{eq:var_meas_soc} with an explicitly written \ac{SOC} variable $z$ linked with linear constraints to the measures $(\mu_0, \mu_\tau, \mu)$. 
\begin{lem}
The following program has the same optimal value as \eqref{eq:var_meas_soc}:
\begin{subequations}
\label{eq:var_meas_soc_ext}
\begin{align}
    p^*_{r} = &\sup \quad (r/2) z_2  + \inp{p}{\mu_\tau} \label{eq:soc_cost} \\
    & z_1+\inp{p^2}{\mu_\tau}= 1 \label{eq:soc_begin}\\
    & z_3 - 2 \inp{p}{\mu_\tau} =0 \\
    & z_1 + z_4= 2 \label{eq:soc_end_aff}\\
     & \mu_\tau - \Lie^\dagger \mu = \delta_0 \otimes \mu_0 \label{eq:soc_liou} \\
    & z =([z_1, z_2, z_3], z_4) \in \mathbb{L}^3 \notag\\
    & \mu, \ \mu_\tau \in \Mp{[0, T] \times X}  \notag.
\end{align}
\end{subequations}
\end{lem}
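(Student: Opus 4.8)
The plan is to exhibit an affine change of variables that places the feasible sets of \eqref{eq:var_meas_soc} and \eqref{eq:var_meas_soc_ext} in bijection while preserving the objective value, so that the two suprema must coincide. The only structural difference between the programs is that \eqref{eq:var_meas_soc} writes the three entries of the cone vector and the cone radius directly as linear functionals of $\mu_\tau$ and the scalar $c$ (through \eqref{eq:var_meas_con_soc_def}--\eqref{eq:var_meas_con_soc}), whereas \eqref{eq:var_meas_soc_ext} introduces a free cone variable $z = ([z_1,z_2,z_3],z_4)\in\mathbb{L}^3$ and ties its coordinates to those same functionals via the affine equalities \eqref{eq:soc_begin}, $z_3 - 2\inp{p}{\mu_\tau}=0$, and \eqref{eq:soc_end_aff}. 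Both programs carry the identical Dynkin constraint (\eqref{eq:var_meas_lie_soc}, equivalently \eqref{eq:soc_liou}) and the same measure domains $\mu,\mu_\tau\in\Mp{[0,T]\times X}$, so the correspondence need only act on the finite-dimensional cone data and may leave $(\mu,\mu_\tau)$ fixed.

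First I would define the forward map: given $(\mu,\mu_\tau,c)$ feasible for \eqref{eq:var_meas_soc}, set $z_1 = 1-\inp{p^2}{\mu_\tau}$, $z_2 = 2c$, $z_3 = 2\inp{p}{\mu_\tau}$, and $z_4 = 1+\inp{p^2}{\mu_\tau}$. With these assignments \eqref{eq:soc_begin} and $z_3 - 2\inp{p}{\mu_\tau}=0$ hold by definition, and \eqref{eq:soc_end_aff} holds because $(1-\inp{p^2}{\mu_\tau})+(1+\inp{p^2}{\mu_\tau})=2$; moreover the quadruple $z$ then equals $(y,\,1+\inp{p^2}{\mu_\tau})$ from \eqref{eq:var_meas_con_soc_def}, so $z\in\mathbb{L}^3$ is literally the same statement as \eqref{eq:var_meas_con_soc}, and \eqref{eq:soc_liou} is untouched. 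Conversely, given $(\mu,\mu_\tau,z)$ feasible for \eqref{eq:var_meas_soc_ext}, I would recover $c = z_2/2$. The linking equalities force $z_1 = 1-\inp{p^2}{\mu_\tau}$ and $z_3 = 2\inp{p}{\mu_\tau}$, while \eqref{eq:soc_end_aff} combined with \eqref{eq:soc_begin} yields $z_4 = 2 - z_1 = 1+\inp{p^2}{\mu_\tau}$; thus $z$ coincides with the cone element of \eqref{eq:var_meas_con_soc_def}--\eqref{eq:var_meas_con_soc} and the membership $z\in\mathbb{L}^3$ transfers verbatim. These two maps are mutually inverse on the cone coordinates, establishing a bijection of the feasible sets.

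Finally I would match the objectives: since $z_2 = 2c$, the extended objective $(r/2)z_2 + \inp{p}{\mu_\tau}$ equals $rc + \inp{p}{\mu_\tau}$, which is the objective \eqref{eq:var_meas_obj_soc}. A bijection of feasible sets preserving the objective value forces the two suprema to agree, so both equal $p^*_r$. The argument is essentially bookkeeping; the only point deserving care is confirming that the pair of affine equations \eqref{eq:soc_begin} and \eqref{eq:soc_end_aff} reconstructs precisely the cone radius $1+\inp{p^2}{\mu_\tau}$ (and not some other affine combination), which is exactly where the factor-of-two scaling relating $z_2$ and $c$ must be tracked consistently.
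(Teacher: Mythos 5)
Your proposal is correct and follows essentially the same route as the paper: an explicit affine change of variables $z=(y,\,1+\inp{p^2}{\mu_\tau})$ with $c=z_2/2$, noting that \eqref{eq:soc_end_aff} arises from summing the first cone coordinate with the radius. You simply spell out the two-way bijection and objective matching in more detail than the paper's brief proof does.
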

\begin{proof}
This formulation is obtained from \eqref{eq:var_meas_soc} through the change of variable $z = (y,1+\inp{p^2}{\mu_\tau})$ and the replacement of $c$ with $z_2/2$ using the second coordinate of constraint \eqref{eq:var_meas_con_soc_def}. Equation \eqref{eq:soc_end_aff} is derived by adding the first coordinate of \eqref{eq:var_meas_con_soc_def} and the last coordinate of \eqref{eq:var_meas_con_soc}.

%Constraints \eqref{eq:soc_begin}-\eqref{eq:soc_end} implement the \ac{SOC} relations in \eqref{eq:var_meas_con_soc_def}-\eqref{eq:var_meas_con_soc}. The variable $z$ from \eqref{eq:var_meas_soc} is equal to $z_2/2$. We use the relation $z_1+z_4=2$ in \eqref{eq:soc_end_aff} by addinng together the $z$ terms in \eqref{eq:var_meas_con_soc} with $z_4 = 1+\inp{p^2}{\mu_\tau}$.
\end{proof}

\begin{thm} \label{thm:cdc_strong_dual}
The dual program of \eqref{eq:var_meas_soc} with weak duality $d^*_r \geq p^*_r$ under Assumptions A1-A3 is
\begin{subequations}
\label{eq:var_cont_soc}
\begin{align}
    d^*_{r} =& \inf \quad u_1 + 2 u_3 + \int_{X_0} v(0, x_0) d\mu_0(x_0) \label{eq:lag_cost} \\
    &\forall (t, x) \in [0, T] \times X: \nonumber \\
    & \qquad \Lie v(t,x) \leq 0 & & \label{eq:lag_occ_meas} \\
    & \forall (t, x) \in [0, T] \times X: \nonumber \\
    & \qquad v(t, x) + u_1 \, p^2(x) - 2 \, u_2 \, p(x) \geq p(x)  \label{eq:lag_stop_meas} \\
    & ([u_1+u_3, -(r/2), u_2], u_3) \in {\mathbb{L}}^3 \label{eq:lag_soc} \\
    & u \in \R^3, \ v \in \cs([0, T] \times X). \notag
\end{align}
\end{subequations}
Strong duality with $d^*_r = p^*_r$ holds under Assumptions A1-A4.
\end{thm}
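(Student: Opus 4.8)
The plan is to derive \eqref{eq:var_cont_soc} as the Lagrangian dual of the equivalent primal \eqref{eq:var_meas_soc_ext}, read off weak duality from the Lagrangian construction, and then close the gap with a weak-$*$ compactness argument that verifies the hypotheses of the general strong-duality result of Appendix \ref{app:duality_general}.

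For weak duality I would dualize the four constraints of \eqref{eq:var_meas_soc_ext}: assign scalar multipliers $u_1, u_2, u_3 \in \R$ to the three scalar equalities \eqref{eq:soc_begin}, the $z_3$-equality, and \eqref{eq:soc_end_aff}, and a test function $v \in \cs$ to the Dynkin relation \eqref{eq:soc_liou}, while keeping $\mu, \mu_\tau \in \Mp{[0,T]\times X}$ and $z \in \mathbb{L}^3$ as the domain of the inner supremum. Using $\inp{v}{\Lie^\dagger \mu} = \inp{\Lie v}{\mu}$ and $\inp{v}{\delta_0 \otimes \mu_0} = \int_{X_0} v(0,x_0)\,d\mu_0$, I group the Lagrangian by primal variable. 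The coefficient of $z = ([z_1,z_2,z_3],z_4)$ is the linear functional $([-u_1-u_3,\ r/2,\ -u_2],\ -u_3)$; since $\mathbb{L}^3$ is self-dual, $\sup_{z\in\mathbb{L}^3}$ is finite (equal to $0$) exactly when its negative lies in $\mathbb{L}^3$, which is \eqref{eq:lag_soc}. The coefficient of $\mu_\tau$ is $p - u_1 p^2 + 2u_2 p - v$, so $\sup_{\mu_\tau \ge 0}$ is finite exactly when $v + u_1 p^2 - 2u_2 p \ge p$ pointwise, i.e. \eqref{eq:lag_stop_meas}; the coefficient of $\mu$ is $\Lie v$, so $\sup_{\mu \ge 0}$ is finite exactly when $\Lie v \le 0$, i.e. \eqref{eq:lag_occ_meas}. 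The leftover constant terms are $u_1 + 2u_3 + \int_{X_0} v(0,x_0)\,d\mu_0$, the dual objective \eqref{eq:lag_cost}. Evaluating the Lagrangian at any jointly feasible pair yields $d^*_r \ge p^*_r$; this only needs compactness (A1) and continuity of $p, p^2, v, \Lie v$ (A3) so that all pairings are well defined, matching the stated A1--A3 hypotheses.

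For strong duality I would cast \eqref{eq:var_meas_soc_ext} into the abstract conic-measure form treated in Appendix \ref{app:duality_general} (a linear functional maximized over $\Mp{\cdot}\times\Mp{\cdot}\times\mathbb{L}^3$ subject to finitely many linear equalities) and verify its hypotheses, the decisive one being weak-$*$ compactness of the feasible set. The key estimate is a uniform mass bound: testing \eqref{eq:soc_liou} against $v \equiv 1$ and $v = t$ and using A5 ($\Lie 1 = 0$, hence $\Lie t = 1$) together with A4 ($\inp{1}{\mu_0}=1$) gives $\inp{1}{\mu_\tau} = 1$ and $\inp{1}{\mu} = \inp{t}{\mu_\tau} \le T$, so every feasible $(\mu,\mu_\tau)$ has total mass bounded by a fixed constant. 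By A1 the space $[0,T]\times X$ is compact, so Banach--Alaoglu makes the bounded-mass balls weak-$*$ compact, and \eqref{eq:soc_begin}--\eqref{eq:soc_end_aff} with continuity of $p,p^2$ confine the linked $z$ to a compact subset of $\mathbb{L}^3$. All constraint functionals are weak-$*$ continuous (A3), so the feasible set is weak-$*$ compact, the primal supremum is attained, and the image of the constraint map is weak-$*$ closed, which is exactly the closedness condition that rules out a duality gap; this is the situation covered by Appendix \ref{app:duality_general}, giving $d^*_r = p^*_r$ under A1--A4.

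I expect the strong-duality closedness step to be the main obstacle: the Lagrangian grouping for weak duality is mechanical, whereas ruling out the gap requires weak-$*$ compactness of the \emph{full} feasible set (the two measures together with the linked SOC block) and closedness of the constraint-map image. The mass bound obtained by testing Dynkin against $1$ and $t$ is the linchpin that makes the compactness available; the remaining care is to confirm that the extra finite-dimensional SOC constraint does not destroy closedness, which is precisely what the general template of Appendix \ref{app:duality_general} handles and why the chance-peak specialization is discharged in Appendix \ref{app:duality_chance}.
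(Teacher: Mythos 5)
Your proposal is correct and follows essentially the same route as the paper: the dual and weak duality are obtained by Lagrangian dualization of the extended form \eqref{eq:var_meas_soc_ext} with multipliers $(u,v)$ and self-duality of $\mathbb{L}^3$, and strong duality is discharged by verifying the hypotheses of the general result in Appendix \ref{app:duality_general} --- a feasible point from a trajectory, mass bounds from testing \eqref{eq:soc_liou} against $1$ and $t$, and boundedness of the linked SOC block from compactness of $X$ and continuity of $p$. The only cosmetic difference is that the paper routes the SOC block through the arrow-matrix LMI embedding of Lemma \ref{lem:soc_lmi} so that the trace bound $\Tr{\Phi^2}\le B^2$ can be checked, whereas you work with $\mathbb{L}^3$ directly; the substance is identical.
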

\begin{proof}
\textbf{Dual formulation:} this formulation is obtained by applying the standard Lagrangian duality method to \eqref{eq:var_meas_soc_ext}. $v$ is the Lagrange multiplier corresponding to constraint \eqref{eq:soc_liou}, and $u$ is the Lagrange multiplier corresponding to constraints \eqref{eq:soc_begin}-\eqref{eq:soc_end_aff}. Conversely, $\mu$ is the Lagrange multiplier corresponding to constraint \eqref{eq:lag_occ_meas}, $\mu_\tau$ is the Lagrange multiplier corresponding to constraint \eqref{eq:lag_stop_meas}, and $z$ is the Lagrange multiplier corresponding to \eqref{eq:lag_soc}. The cost in \eqref{eq:lag_cost} corresponds to the right-hand sides of constraints \eqref{eq:soc_begin}-\eqref{eq:soc_liou}, while the right-hand side of \eqref{eq:lag_stop_meas} and the second coordinate $-(r/2)$ in \eqref{eq:lag_soc} correspond to the cost in \eqref{eq:soc_cost}.

\textbf{Strong duality:} see Appendix \ref{app:duality_chance}.
\end{proof}

This strong duality property is an important feature of the infinite-dimensional problem at hand: it means that one may equivalently solve moment relaxations of \eqref{eq:var_meas_soc_ext} and \ac{SOS} tightenings of \eqref{eq:var_cont_soc}.
 
%  By weak duality it holds that $d^*_{k} \geq p^*_{k}$. We will need to modify proofs of strong duality due the SOC terms.
\section{ES Program}

\label{sec:cvar_peak}

This section poses \iac{LP} in measures to solve the \ac{CVAR} upper-bound to  Problem \ref{prob:quantile}.

\subsection{ES Problem}
The \ac{CVAR} chance-peak problem replaces the \ac{VAR} objective in  \eqref{eq:peak_chance_obj} with \ac{CVAR}.
\begin{prob}
\label{prob:cvar_peak}
    The \ac{CVAR} program that upper-bounds the chance-peak problem in \eqref{eq:peak_chance} is
    \begin{subequations}
\label{eq:peak_cvar}
\begin{align}
    P^*_c = & \sup_{t^* \in [0, T] }  \mathrm{ES}_{\epsilon}(p(x(t^*))) \label{eq:peak_cvar_obj}\\
     & \text{$x(t)$ follows $\Lie$ from $t=0$ until } \ \tau_X \wedge t^* \label{eq:peak_cvar_stop}\\
     & x(0) \sim \mu_0.
    %  & VaR_{\epsilon}(\mu_{t^*})
    %  & \prb_{\mu_{t^*}}[{p(x(t)) \geq \gamma}] \geq \epsilon. \label{eq:peak_chance_prob}
\end{align}
\end{subequations}
\end{prob}

% The hard probability constraint in \eqref{eq:peak_chance_prob} ensures that 
% The stopping rule in \eqref{eq:peak_chance_stop} will freeze the trajectory evolution at the first event between reaching time $t^*$ or whenever $x(t)$ hits the boundary $\partial X$. Problem \eqref{eq:peak_chance} chooses a stopping time $t^*$ such that there is an at least $\epsilon$-chance that $p(x(t))$ exceeds $\gamma$. A consequence of this statement is that $\prb_{\mu_{t}}[p(x(t)) \leq \gamma] \geq 1-\epsilon$ for every stopping time $t \in [0, T]$. Replacing the $\geq$ sign in \eqref{eq:peak_chance_prob} with a $\leq$ sign would allow $\gamma$ to become unbounded towards $\infty$. \urg{might be too much discussion}
% \subsection{Measure Program}

\subsection{ES reformulation}

We begin by using the following lemma to reformulate the absolute-continuity-based \ac{CVAR} definition in \eqref{eq:cvar_abscont} into an equivalent domination-based \ac{LP} in measures.

\begin{lem}
\label{lem:abscont_domination}
    Let ${\mu}, \nu \in \Mp{\R}$ be measures {such that $\nu \ll \mu$}, and {$\frac{d \nu}{ d \mu} \leq 1$}. Then there exists a slack measure ${\hat{\nu}} \in \Mp{\R}$ such that ${\nu + \hat{\nu} = \mu}$. 
    % Equivalently, the domination relation $\kappa \leq \nu$ is the same as the absolute continuity $\kappa \ll \nu$ coupled with $\frac{d \kappa}{ d \nu} \leq 1$. \urg{get a source}.
\end{lem}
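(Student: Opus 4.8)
The plan is to construct the slack measure $\hat\nu$ explicitly by prescribing its density against $\mu$. Since $\nu \ll \mu$, the Radon-Nikodym derivative reviewed in Section~\ref{sec:preliminaries} supplies a measurable density $\rho = \frac{d\nu}{d\mu}: \R \rightarrow \R^+$ satisfying $\inp{f}{\nu} = \inp{f\rho}{\mu}$ for all test functions $f$. Because $\nu$ is a nonnegative measure, $\rho \geq 0$ holds $\mu$-almost everywhere, and the hypothesis $\frac{d\nu}{d\mu} \leq 1$ gives $\rho \leq 1$ $\mu$-almost everywhere; hence $1 - \rho$ takes values in $[0, 1]$ up to a $\mu$-null set.

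Next I would define $\hat\nu$ through the complementary density $1 - \rho$, i.e. by declaring $\inp{f}{\hat\nu} = \inp{f(1-\rho)}{\mu}$ for every $f \in C(\R)$ (extended to bounded Borel $f$ as in the notation section). Since the density $1 - \rho$ is nonnegative $\mu$-a.e., the functional $f \mapsto \inp{f(1-\rho)}{\mu}$ is a nonnegative linear functional, so $\hat\nu$ is a well-defined nonnegative Borel measure, $\hat\nu \in \Mp{\R}$, with $\hat\nu \ll \mu$ and $\frac{d\hat\nu}{d\mu} = 1 - \rho$.

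It then remains to verify the decomposition $\nu + \hat\nu = \mu$. Testing against an arbitrary $f \in C(\R)$ and using linearity of Lebesgue integration,
\begin{align*}
    \inp{f}{\nu + \hat\nu} = \inp{f\rho}{\mu} + \inp{f(1-\rho)}{\mu} = \inp{f(\rho + 1 - \rho)}{\mu} = \inp{f}{\mu},
\end{align*}
which identifies $\nu + \hat\nu$ with $\mu$ as elements of $\mathcal{M}(\R)$, completing the construction.

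I do not anticipate a genuine obstacle here: the statement is essentially an immediate consequence of the Radon-Nikodym theorem. The only points requiring care are that $\rho$ be chosen in $[0,1]$ up to a $\mu$-null set (so that $1-\rho$ is again nonnegative and $\hat\nu$ indeed lands in the cone $\Mp{\R}$), and that $\hat\nu$ be read as a possibly non-finite measure should $\mu$ carry infinite mass — neither of which affects the argument.
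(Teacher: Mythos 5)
Your proposal is correct and follows the same route as the paper's proof: both define $\hat\nu$ by the complementary density $\frac{d\hat\nu}{d\mu} = 1 - \frac{d\nu}{d\mu}$ and conclude $\nu + \hat\nu = \mu$. You simply spell out the details (nonnegativity of $1-\rho$ and the verification against test functions) that the paper leaves implicit.
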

\begin{proof}
    The measure ${\hat{\nu}}$ may be chosen with {$\frac{d \hat{\nu}} {d \mu} = 1 - \frac{d\nu} {d \mu}$}. This implies that ${\frac{d \nu} {d \mu} + \frac{d \hat{\nu}} {d \mu}} = 1$, resulting in {$\nu + \hat{\nu} = \mu$}.
\end{proof}

% We now list a new (to the best of our knowledge) expression for the \ac{CVAR}.
\begin{thm}
\label{thm:cvar_dom}
The \ac{CVAR} is the solution to the following \ac{LP} in measures:
\begin{subequations}
\label{eq:cvar_dom}
%\begin{align}
%    ES_\epsilon(\nu) = & \sup_{\psi, \hat{\psi} \in \Mp{\R}} \inp{\omega}{\psi} \\
%    & \epsilon\psi + \hat{\psi} = \nu \label{eq:cvar_dom_cond}\\
%    & \inp{1}{\psi} = 1. \label{eq:cvar_mass}
%\end{align}
\begin{align}
    \mathrm{ES}_\epsilon(V) = & \sup_{\nu, \hat{\nu} \in \Mp{\R}} \inp{\mathrm{id}_\R}{\nu} \\
    & \epsilon\nu + \hat{\nu} = \psi = V_\#\mathbb{P} \label{eq:cvar_dom_cond}\\
    & \inp{1}{\nu} = 1. \label{eq:cvar_mass}
\end{align}
    \end{subequations}
\end{thm}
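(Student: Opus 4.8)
The plan is to reduce \eqref{eq:cvar_dom} to the absolutely-continuous characterization of $\mathrm{ES}_\epsilon(V)$ already supplied by Lemma \ref{lem:cvar_abscont}. Both programs maximize the identical objective $\inp{\mathrm{id}_\R}{\nu}$ under the identical mass normalization $\inp{1}{\nu} = 1$; the only structural difference is how the constraint on $\nu$ is expressed (a Radon--Nikodym bound in \eqref{eq:cvar_abscont} versus a domination-by-slack condition in \eqref{eq:cvar_dom}). I would therefore show that the projection of the feasible set of \eqref{eq:cvar_dom} onto the $\nu$-coordinate coincides exactly with the feasible set of \eqref{eq:cvar_abscont}. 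Equality of both feasible sets and objectives then forces equality of the two optimal values, and Lemma \ref{lem:cvar_abscont} identifies that common value with $\mathrm{ES}_\epsilon(V)$, closing the argument.

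For the forward inclusion I would take any $\nu$ feasible for \eqref{eq:cvar_abscont}, so $\nu \ll \psi$ with $\tfrac{d\nu}{d\psi} \leq 1/\epsilon$. Scaling gives $\epsilon\nu \ll \psi$ with $\tfrac{d(\epsilon\nu)}{d\psi} \leq 1$, so Lemma \ref{lem:abscont_domination} applied with $\mu = \psi$ and the measure $\epsilon\nu$ in the role of its $\nu$ produces a nonnegative slack $\hat{\nu} \in \Mp{\R}$ satisfying $\epsilon\nu + \hat{\nu} = \psi$. Hence $(\nu, \hat{\nu})$ is feasible for \eqref{eq:cvar_dom}, and the objective is unchanged.

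For the reverse inclusion I would start from a feasible pair $(\nu, \hat{\nu})$ of \eqref{eq:cvar_dom}. Since $\hat{\nu} \geq 0$, the identity $\epsilon\nu + \hat{\nu} = \psi$ forces $\epsilon\nu(A) \leq \psi(A)$ for every Borel set $A$, i.e. the domination $\epsilon\nu \leq \psi$. Domination makes every $\psi$-null set $\epsilon\nu$-null, so $\epsilon\nu \ll \psi$; and testing $\int_A \tfrac{d(\epsilon\nu)}{d\psi}\,d\psi \leq \int_A 1\,d\psi$ over all $A$ yields $\tfrac{d(\epsilon\nu)}{d\psi} \leq 1$ $\psi$-almost everywhere, hence $\tfrac{d\nu}{d\psi} \leq 1/\epsilon$. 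Thus $\nu$ is feasible for \eqref{eq:cvar_abscont}.

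The step requiring the most care is this reverse direction: converting the set-wise domination $\epsilon\nu \leq \psi$ back into the pointwise Radon--Nikodym bound, and keeping the factor $\epsilon$ tracked consistently between the two formulations. The Notation section only records that a Radon--Nikodym bound implies domination, so I would spell out the converse inline as above rather than cite it. Apart from this bookkeeping both inclusions are routine, and notably no compactness or regularity hypotheses on $\psi$ are needed, since the entire equivalence rests only on the domination/absolute-continuity dictionary and Lemma \ref{lem:abscont_domination}.
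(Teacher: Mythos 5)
Your proposal is correct and takes essentially the same route as the paper: both reduce \eqref{eq:cvar_dom} to the absolutely-continuous formulation of Lemma \ref{lem:cvar_abscont} by rescaling the Radon--Nikodym bound by $\epsilon$ and invoking Lemma \ref{lem:abscont_domination} to produce the slack measure $\hat{\nu}$. If anything you are slightly more complete than the paper, whose proof writes out only the forward inclusion and asserts the equivalence of optima; your explicit reverse direction (recovering $\nu \ll \psi$ and $\tfrac{d\nu}{d\psi} \leq 1/\epsilon$ from $\epsilon\nu + \hat{\nu} = \psi$ with $\hat{\nu} \geq 0$) supplies the half that the paper leaves implicit.
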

\begin{proof} 
Equation \eqref{eq:cvar_radon_nikodym} may be divided through by $\epsilon$ to form $\epsilon {\frac{d \nu}{ d \psi} =  \frac{d (\nu \epsilon)}{ d \psi}} \leq 1$. Given that $\epsilon > 0$, the absolute continuity relation ${\nu \ll \psi}$ implies that $\epsilon {\nu \ll \psi}$. Lemma \ref{lem:abscont_domination} is applied to the combination of $
    \epsilon {\nu \ll \psi}$ and $ {\frac{d (\nu \epsilon)}{ d \psi}} \leq 1$ to produce constraint \eqref{eq:cvar_dom_cond}. This proves the conversion and equivalence of optima between \eqref{eq:cvar_abscont} and \eqref{eq:cvar_dom}.

% Let $\psi$ be a feasible point of constraints \eqref{eq:cvar_abscont_def}-\eqref{eq:cvar_radon_nikodym} with expectation $c = \inp{q}{\psi}$.
% We can construct a pair of measures $\psi = \psi$ and $\hat{\psi} = \nu - \psi\epsilon$ that satisfy \eqref{eq:cvar_dom_cond}-\eqref{eq:cvar_dom_cond}. The objectives are the same with $\inp{q}{\psi} = \inp{q}{\psi}$ given that $\psi = \psi$.

% \urg{This is unsatisfying, I feel like I'm missing something with the Radon-Nikodym derivative.

% There's a simpler proof too that I can't put into math just yet: $\psi$ should put all of its mass above the $VaR_{\epsilon}(\nu)$. The domination constraint \eqref{eq:cvar_dom_cond} will constrain the shape of $\psi$, such that $\psi \epsilon$ is the portion of $\nu$ above $VaR_{\epsilon}(\nu)$. The objective $\inp{q}{\psi}$ is then the mean value of $q$ above $VaR_{\epsilon}(\nu)$, which is equal to the \ac{CVAR}.}
\end{proof}

\subsection{Measure Program}

% Define ${\hat{p}}$ and ${\check{p}}$ as the finite quantities (by A1 and A3)
% \begin{align}
%     {\hat{p}} = \
% \end{align}

\Iac{LP} in measures will be created to upper-bound the \ac{CVAR} program \eqref{eq:peak_cvar}. The variables involved are the terminal measure $\mu_\tau$, the relaxed occupation measure $\mu$, the \ac{CVAR} dominated measure ${\nu}$, and the \ac{CVAR} slack measure ${\hat{\nu}}$.

\begin{subequations}
\label{eq:peak_cvar_meas}
%\begin{align}
%p^*_c = & \ \sup \quad \inp{\omega}{\psi} \label{eq:peak_cvar_meas_obj} \\
%    & \mu_\tau = \delta_0 \otimes\mu_0 + \Lie^\dagger \mu \label{eq:peak_cvar_meas_flow}\\
%    & \inp{1}{\psi} = 1\label{eq:peak_cvar_meas_prob}\\
%    & \epsilon \psi + \hat{\psi} = p_\# \mu_\tau \label{eq:peak_cvar_meas_cvar}\\
%    & \mu, \mu_\tau \in \Mp{[0, T] \times X} \label{eq:peak_cvar_meas_peak}\\    
%    & \psi, \hat{\psi} \in \Mp{\R}. \label{eq:peak_cvar_meas_slack}
%\end{align}
{
\begin{align}
p^*_c = & \ \sup \quad \inp{\mathrm{id}_\R}{\nu} \label{eq:peak_cvar_meas_obj} \\
    & \mu_\tau = \delta_0 \otimes\mu_0 + \Lie^\dagger \mu \label{eq:peak_cvar_meas_flow}\\
    & \inp{1}{\nu} = 1\label{eq:peak_cvar_meas_prob}\\
    & \epsilon \nu + \hat{\nu} = p_\# \mu_\tau \label{eq:peak_cvar_meas_cvar}\\
    & \mu, \mu_\tau \in \Mp{[0, T] \times X} \notag\\    
    & \nu, \hat{\nu} \in \Mp{\R}. \notag
\end{align}
}
\end{subequations}

\begin{thm}
\label{thm:upper_bound_cvar}
    Program \eqref{eq:peak_cvar_meas} upper-bounds \eqref{eq:peak_cvar} with $p^* \geq P^*$ under assumptions A2-A4.
\end{thm}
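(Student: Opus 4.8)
The plan is to mimic the lifting argument of Theorem~\ref{thm:upper_bound_nonlinear}: I will show that every admissible terminal time $t^*$ for the stopping problem \eqref{eq:peak_cvar} induces a feasible tuple $(\mu, \mu_\tau, \nu, \hat{\nu})$ for the measure program \eqref{eq:peak_cvar_meas} attaining the same objective value, so that the supremum defining $p^*_c$ dominates the one defining $P^*_c$. No converse is attempted here; that would require the realization result of Lemma~\ref{lem:no_relaxation_gap} and the additional assumptions A5--A7.

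First I would fix an admissible $t^* \in [0, T]$ with associated stopping time $\tau = t^* \wedge \tau_X$, and following \eqref{eq:avg_free_occ}--\eqref{eq:avg_free_stop} construct the occupation measure $\mu$ and stopping measure $\mu_\tau$ of the process $\Lie$ started from $\mu_0$ and run until $\tau$. By the martingale relation \eqref{eq:dynkin_strong}--\eqref{eq:dynkin}, these measures satisfy constraint \eqref{eq:peak_cvar_meas_flow} by construction. Moreover, under A4 the stopping measure has unit mass, since $\mu_\tau([0, T] \times X) = \int_{X_0} 1 \, d\mu_0 = 1$ directly from \eqref{eq:avg_free_stop}, so $\mu_\tau$ is a Borel probability measure on $[0, T] \times X$.

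Next I would identify the law of the terminal reward. Under A3 the map $(t, x) \mapsto p(x)$ is continuous, so the pushforward $\psi := p_\# \mu_\tau$ is a well-defined Borel probability measure on $\R$; by the change-of-variables property of pushforwards it is exactly the law of the random variable $V := p(x(\tau))$, which coincides with $p(x(t^*))$ under A2 (the trajectory is frozen on $\partial X$ after $\tau_X$). I then invoke Theorem~\ref{thm:cvar_dom} for this $\psi$: let $(\nu, \hat{\nu})$ be a maximizer of the domination-based \ac{LP} \eqref{eq:cvar_dom}, so that $\epsilon \nu + \hat{\nu} = \psi = p_\# \mu_\tau$, $\inp{1}{\nu} = 1$, and $\inp{\mathrm{id}_\R}{\nu} = \mathrm{ES}_\epsilon(V)$. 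These identities are precisely constraints \eqref{eq:peak_cvar_meas_cvar} and \eqref{eq:peak_cvar_meas_prob}.

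Consequently the tuple $(\mu, \mu_\tau, \nu, \hat{\nu})$ is feasible for \eqref{eq:peak_cvar_meas}, and its objective \eqref{eq:peak_cvar_meas_obj} equals $\inp{\mathrm{id}_\R}{\nu} = \mathrm{ES}_\epsilon(p(x(t^*)))$, the value of the stopping problem at $t^*$. Taking the supremum over all admissible $t^*$ yields $p^*_c \geq P^*_c$. The one point requiring genuine care is the identification $\psi = V_\# \mathbb{P}$ together with the use of Theorem~\ref{thm:cvar_dom}: the \ac{CVAR} is not linear in the underlying law, so it is essential that \eqref{eq:cvar_dom} expresses $\mathrm{ES}_\epsilon$ as an \emph{attained} supremum of a linear functional, which is exactly what permits the optimal $(\nu, \hat{\nu})$ to be transplanted into \eqref{eq:peak_cvar_meas} without loss of objective value.
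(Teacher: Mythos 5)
Your proposal is correct and follows essentially the same route as the paper's proof: construct the occupation and stopping measures from a fixed stopping time $t^*$, then choose $(\nu,\hat{\nu})$ via Theorem~\ref{thm:cvar_dom} applied to $\psi = p_\#\mu_\tau$ so the objective equals $\mathrm{ES}_\epsilon(p(x(t^*)))$. Your additional remarks on unit mass of $\mu_\tau$ and on the attainment of the supremum in \eqref{eq:cvar_dom} simply make explicit details the paper leaves implicit.
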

\begin{proof}
    We will prove this upper-bound by constructing a measure representation of an \ac{SDE} trajectory in \eqref{eq:peak_cvar}.
    Let $t^* \in [0, T]$ be a stopping time. Define $\mu_\tau = \mu_{t^*}$ as the state (probability) distribution of the process \eqref{eq:peak_cvar_stop} at time $t^*$ (accounting for the stopping time $\tau_X \wedge t^*$). Let $\mu$ be the occupation measure of this \ac{SDE} connecting together the initial distribution $\mu_0$ and the terminal distribution $\mu_{t^*}$. The measures ${\nu, \hat{\nu}}$ are set in accordance with Theorem \ref{thm:cvar_dom} under { $\Omega = [0,T]\times X$, $\mathbb{P} = \mu_{t^*}$ and $V=p$ (hence} ${\psi} = p_\# \mu_{t^*}$). The upper-bound holds because every process trajectory has a measure construction.
\end{proof}

\begin{thm}
\label{thm:no_relaxation_cvar}
    There is no relaxation gap between \eqref{eq:peak_cvar_meas}  and \eqref{eq:peak_cvar} $(p^*_c = P^*_c)$ under assumptions A1-A7. 
\end{thm}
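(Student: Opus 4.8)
The plan is to establish the two inequalities $p^*_c \geq P^*_c$ and $p^*_c \leq P^*_c$ separately. The first is already delivered by Theorem \ref{thm:upper_bound_cvar} under the weaker hypotheses A2--A4, so the entire content of the no-relaxation-gap claim lies in the reverse inequality $p^*_c \leq P^*_c$, which is where the full strength of A1--A7 enters. My strategy is to show that every feasible point of the measure \ac{LP} \eqref{eq:peak_cvar_meas} is realizable by an honest stopped trajectory whose \ac{CVAR} dominates the \ac{LP} objective, so the supremum over measures cannot exceed the supremum over admissible trajectories.

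First I would fix an arbitrary feasible tuple $(\mu_0, \mu, \mu_\tau, \nu, \hat{\nu})$ of \eqref{eq:peak_cvar_meas} and record that $\mu_\tau$ is automatically a probability measure: testing the flow relation \eqref{eq:peak_cvar_meas_flow} against $v \equiv 1$ and using A5 ($\Lie 1 = 0$) together with A4 ($\inp{1}{\mu_0} = 1$) gives $\inp{1}{\mu_\tau} = 1$. Since \eqref{eq:peak_cvar_meas_flow} is the same martingale relation as \eqref{eq:var_meas_lie_soc}, Lemma \ref{lem:no_relaxation_gap} applies verbatim: under A1, A2, A4--A7 there is a genuine trajectory of $\Lie$ started at $\mu_0$ and stopped at some admissible time $\tau_X \wedge t^*$ whose stopping measure is exactly $\mu_\tau$ and occupation measure is $\mu$. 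Hence $p_\#\mu_\tau$ is the law of the univariate random variable $p(x(\tau_X \wedge t^*))$, which by A1 and A3 is bounded and therefore has the finite first and second moments required by the \ac{CVAR} machinery.

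With the realization in hand, the constraints \eqref{eq:peak_cvar_meas_prob} and \eqref{eq:peak_cvar_meas_cvar} state precisely that $(\nu, \hat{\nu})$ is feasible for the domination \ac{LP} \eqref{eq:cvar_dom} associated with the law $\psi = p_\#\mu_\tau$. Theorem \ref{thm:cvar_dom} identifies the value of that \ac{LP} with $\mathrm{ES}_\epsilon(p(x(\tau_X \wedge t^*)))$, so any feasible objective satisfies $\inp{\mathrm{id}_\R}{\nu} \leq \mathrm{ES}_\epsilon(p(x(\tau_X \wedge t^*))) \leq P^*_c$, the last step because $t^*$ is admissible in \eqref{eq:peak_cvar}. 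Taking the supremum over all feasible tuples yields $p^*_c \leq P^*_c$, and combining with Theorem \ref{thm:upper_bound_cvar} closes the gap. The conceptual engine here is that the jointly-optimized program \eqref{eq:peak_cvar_meas} decouples into an outer trajectory-realization problem (handled by Cho--Stockbridge through Lemma \ref{lem:no_relaxation_gap}) and an inner \ac{CVAR}-domination subproblem (handled by Theorem \ref{thm:cvar_dom}), with no loss incurred at either stage.

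I expect the main obstacle to be the realization step rather than the \ac{CVAR} bookkeeping: one must be sure that the trajectory produced by Lemma \ref{lem:no_relaxation_gap} stops at a time of the admissible form $\tau_X \wedge t^*$ appearing in \eqref{eq:peak_cvar}, so that the bound $\mathrm{ES}_\epsilon(\cdot) \leq P^*_c$ is legitimate, and that the convex (nonlinear) dependence of $\mathrm{ES}_\epsilon$ on the terminal law does not spoil the supremum when the realized stopping measure is, in general, a mixture over stopping times. Confirming that Lemma \ref{lem:no_relaxation_gap} indeed delivers a stopping rule within the admitted class --- as its statement asserts via \eqref{eq:peak_chance_tail_stop} --- is exactly what licenses the identification $\sup_{t^*} \mathrm{ES}_\epsilon(p(x(\tau_X \wedge t^*))) = P^*_c$ and thereby completes the argument.
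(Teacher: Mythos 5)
Your proof is correct and follows essentially the same route as the paper's: the paper's (much terser) argument likewise combines Lemma \ref{lem:no_relaxation_gap} to realize any feasible $(\mu_\tau,\mu)$ as an admissible stopped trajectory with Theorem \ref{thm:cvar_dom} to identify the objective $\inp{\mathrm{id}_\R}{\nu}$ with the \ac{CVAR} of $p_\#\mu_\tau$, the inequality $p^*_c \geq P^*_c$ being supplied by Theorem \ref{thm:upper_bound_cvar}. Your version simply makes explicit the steps the paper leaves implicit (testing $v\equiv 1$ to verify $\inp{1}{\mu_\tau}=1$, and the one-directional use of the domination \ac{LP} to bound a feasible $\nu$ by $\mathrm{ES}_\epsilon$).
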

\begin{proof}
Every $(\mu_\tau, \mu)$ is supported on an stochastic process trajectory by Lemma \ref{lem:no_relaxation_gap}.
The objective in \eqref{eq:peak_cvar_meas_obj} will equal the \ac{CVAR} by Theorem \ref{thm:cvar_dom} given that $p_\# \mu_\tau$ is a probability distribution. 
\end{proof}

\begin{rmk}
To the best of our knowledge, there does not appear to be a consistent comparison between the \ac{VP} bound and the \ac{CVAR} of a unimodal distribution. 
\end{rmk}
% Lemma 2 in \cite{vcerbakova2006worst} offers an upper-bound on the worst-case \ac{VAR} given the first two moments of a symmetric and unimodal $\xi$.

\subsection{Function Program}

The functional \ac{LP} dual to \eqref{eq:peak_cvar_meas} will have variables {$u \in \R$,} $v \in \cs([0,T]\times X)$ and ${w} \in C(\R)$ (dual to \eqref{eq:peak_cvar_meas_flow}-\eqref{eq:peak_cvar_meas_cvar}). 
\begin{thm} \label{thm:cvar_strong_dual}
The strong-dual program of \eqref{eq:peak_cvar_meas} with duality $d^*_c {=} p^*_c$ under A1-A4 is
\begin{subequations}
\label{eq:peak_cvar_cont}
%\begin{align}
%    d^*_{c} =& \inf_{\beta \in \R} \quad  \int_{X_0} v(0, x_0) d\mu_0(x_0) + \beta \label{eq:lag_cost_cvar} \\
%    & \forall (t, x) \in [0, T] \times X: \nonumber \\
%    & \qquad  \Lie v(t,x) \leq 0  \label{eq:lag_occ_meas_cvar}\\
%    & \forall (t, x) \in [0, T] \times X:\nonumber \\
%    &\qquad  v(t, x) - h(p(x)) \geq 0 & &   \label{eq:lag_stop_meas_cvar} \\
%    & \epsilon h(q) +  \beta \geq q & & \forall q \in \R \\
%    & h(q) \geq 0 & & \forall q \in \R \\
%    & h \in C(\R), \ v \in \cs([0, T] \times X).
%\end{align}
\begin{align}
    d^*_{c} =& \inf \quad  {u + \inp{v(0,\bullet)}{\mu_0}}\\ %\int_{X_0} v(0, x_0) d\mu_0(x_0) \label{eq:lag_cost_cvar} \\
    & \Lie v \leq 0 \label{eq:lag_occ_meas_cvar}\\
    & v \geq {w \, \circ \, }p \label{eq:lag_stop_meas_cvar} \\
    & {u \ + \ } \epsilon \, {w} \geq \mathrm{id}_\R \\
    & {w} \geq 0 \\
    & {u \in \R}, v \in \cs{, w \in C(\R)}.
\end{align}
\end{subequations}
\end{thm}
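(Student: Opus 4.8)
The plan is to obtain \eqref{eq:peak_cvar_cont} by standard Lagrangian duality, exactly as in the proof of Theorem \ref{thm:cdc_strong_dual}, and then to argue the absence of a duality gap. First I would attach multipliers to the three equality constraints of \eqref{eq:peak_cvar_meas}: a test function $v \in \cs([0,T]\times X)$ to the martingale constraint \eqref{eq:peak_cvar_meas_flow}, a scalar $u \in \R$ to the mass constraint \eqref{eq:peak_cvar_meas_prob}, and a continuous function $w \in C(\R)$ to the domination constraint \eqref{eq:peak_cvar_meas_cvar}. Forming the Lagrangian of the maximization and applying the adjoint identities $\inp{v}{\Lie^\dagger \mu} = \inp{\Lie v}{\mu}$ and $\inp{v}{\delta_0 \otimes \mu_0} = \inp{v(0,\cdot)}{\mu_0}$ together with the pushforward relation $\inp{w}{p_\#\mu_\tau} = \inp{w \circ p}{\mu_\tau}$, I would collect the coefficient of each primal measure: $\Lie v$ for $\mu$, $w\circ p - v$ for $\mu_\tau$, $\mathrm{id}_\R - u - \epsilon w$ for $\nu$, and $-w$ for $\hat\nu$. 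Since each primal variable ranges over a nonnegative cone $\Mp{\cdot}$, the inner supremum is finite precisely when all four coefficients are nonpositive, which reproduces exactly the dual constraints \eqref{eq:lag_occ_meas_cvar}, \eqref{eq:lag_stop_meas_cvar}, $u + \epsilon w \geq \mathrm{id}_\R$, and $w \geq 0$; the residual constant terms $u + \inp{v(0,\cdot)}{\mu_0}$ give the dual objective. This construction simultaneously establishes weak duality $d^*_c \geq p^*_c$.

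\textbf{Setup for strong duality.} For the no-gap claim I would verify the hypotheses of the general strong-duality result of Appendix \ref{app:duality_general}, specialized to this purely linear (no second-order-cone link) setting. The primal feasible set is nonempty, since the construction in the proof of Theorem \ref{thm:upper_bound_cvar} produces a feasible tuple from any admissible stopping time. Next I would establish uniform mass bounds and compact containment of supports: testing \eqref{eq:peak_cvar_meas_flow} against the constant $1$ and using the generator identity $\Lie 1 = 0$ (A5) with $\inp{1}{\mu_0}=1$ (A4) yields $\inp{1}{\mu_\tau}=1$; testing against $t$ and using $\Lie t = 1$ bounds $\inp{1}{\mu} = \inp{t}{\mu_\tau} \leq T$; constraint \eqref{eq:peak_cvar_meas_prob} fixes $\inp{1}{\nu}=1$, and then \eqref{eq:peak_cvar_meas_cvar} gives $\inp{1}{\hat\nu} = 1-\epsilon$. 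By compactness (A1) and continuity of $p$ (A3), the measures $\mu,\mu_\tau$ are supported on the compact set $[0,T]\times X$ while $\nu,\hat\nu$ are supported on the compact range $p(X)$, so the objective integrand $\mathrm{id}_\R$ is bounded and weak-$*$ continuous there. Banach--Alaoglu then makes the feasible set weak-$*$ compact and guarantees that $p^*_c$ is attained.

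\textbf{Main obstacle.} The delicate point is ruling out a duality gap rather than merely establishing primal attainment: this requires the closedness of the value cone $\{(\Lie^\dagger\mu - \mu_\tau,\ \inp{1}{\nu},\ \epsilon\nu + \hat\nu - p_\#\mu_\tau,\ \inp{\mathrm{id}_\R}{\nu}) : \mu,\mu_\tau,\nu,\hat\nu \geq 0\}$, equivalently upper semicontinuity of the optimal value as a function of the right-hand-side data. I expect to obtain this exactly as in Appendix \ref{app:duality_general}: the weak-$*$ compactness established above lets any weak-$*$ convergent sequence of asymptotically feasible tuples converge to a feasible limit with no loss in the objective, so the value map is closed and the infinite-dimensional linear programming duality theorem applies, yielding $d^*_c = p^*_c$. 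The remaining technical verifications are carried out in Appendix \ref{app:duality_cvar}.
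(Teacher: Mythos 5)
Your proposal is correct and takes essentially the same route as the paper: its Appendix~\ref{app:duality_cvar} verifies exactly your three ingredients --- bounded masses and compact supports (Theorem~\ref{thm:bounded_mass_cvar}), existence of a feasible tuple built from a process trajectory (the construction in Theorem~\ref{thm:upper_bound_cvar}), and continuity of the problem data --- before invoking a generic infinite-dimensional LP strong-duality theorem, and your Lagrangian coefficient bookkeeping reproduces the adjoint map $\mathcal{A}^*$ written there. The only cosmetic difference is that you appeal to the closed-cone result of Appendix~\ref{app:duality_general} while the paper cites the analogous theorem from \cite{tacchi2021thesis}; both rest on the same Banach--Alaoglu compactness argument.
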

\begin{proof}
    See Appendix \ref{app:duality_cvar}.
\end{proof}

%\eqref{eq:lag_cost}
% \input{sections/duality_finite}
\section{Finite Moment Program}
\label{sec:lmi}

This section will apply the moment-\ac{SOS} hierarchy of \acp{SDP} to develop upper-bounds of \eqref{eq:var_meas_soc} and \eqref{eq:peak_cvar_meas}.

% This section will upper-bound \eqref{eq:var_meas_soc} utilizing a converging hierarchy of \acp{SDP} of increasing size. 

% Such an approach will require the following assumptions

\subsection{Review of Moment-SOS Hierarchy}

Refer to \cite{lasserre2009moments} for a more complete introduction to concepts reviewed in this subsection.
Letting $\bm = \{\bm_\alpha\}_{\alpha\in\N^n} \in \R^{\N^n}$ be a multi-indexed vector, we can define a Riesz linear functional  $L_\bm: \ \R[x] \rightarrow \R$ as
% All content from this subsection is referenced from \cite{lasserre2009moments}.
% For any multi-indexed sequence $\bm = \{\bm_\alpha\}_{\alpha\in\N^n} \in \R^{\N^n}$, we define the Riesz functional $L_\bm: \ \R[x] \rightarrow \R$ as follows:
\begin{subequations}
\begin{equation}
\label{eq:riesz}
    \begin{array}{ccc}
      p(x) = \sum\limits_{\alpha\in\N^n} p_\alpha x^\alpha & \longmapsto & \textstyle L_\bm p = \sum\limits_{\alpha\in \N^n} p_\alpha \bm_\alpha.
    \end{array}
\end{equation}
%  \R[x] & \longrightarrow & \R \\

For any measure $\mu \in \Mp{X}$ and multi-index $\alpha \in \N^n$, the $\alpha$-moment of $\mu$ is $\bm_\alpha = \inp{x^\alpha}{\mu}$. The infinite-dimensional moment sequence $\bm = \{\bm_\alpha\}_{\alpha \in \N^n}$ is related to the linear functional $L_\bm$ by
% Let $\mu \in \Mp{X}$ be a measure. The $\alpha$-moment of $\mu$ for $\alpha \in \N^n$ is $\bm_\alpha = \inp{x^\alpha}{\mu}$. The collection of moments $\bm = \{\bm_\alpha\}_{\alpha \in \N^n}$ is a moment sequence and has the following property:
\begin{equation}
    \label{eq:momriesz}
    \forall p \in \R[x], \quad L_\bm p = \inp{p}{\mu}.
\end{equation}
\end{subequations}

%Corresponding to a moment sequence $\bm$, there exists a unique Riesz linear functional $L_\bm$ that acts on polynomials $g(x)=\sum_{\alpha\in \N^n} g_\alpha x^\alpha \in \R[x]$ with,
%\begin{equation}
%\label{eq:riesz}
%    \textstyle L_\bm g(x) = L_\bm \sum_{\alpha\in \N^n} g_\alpha x^\alpha = \sum_{\alpha\in \N^n} g_\alpha \bm_\alpha.
%\end{equation}

%A moment matrix $\M[x]$ is an infinite-dimensional matrix indexed by $(\alpha, \beta) \in \N^n$ under the relation $\M[x]_{\alpha, \beta} = \bm_{\alpha + \beta}$. A necessary condition for an infinite moment sequence $\tilde{\bm}$ to have a \textit{representing measure} $\mu \in \Mp{X}$ such that $\tilde{\bm}_\alpha = \inp{x^\alpha}{\mu}$ is $\M[x] \succeq 0$.

% A key result to build the moment-SOS hierarchy is the characterization of sequences $\bm$ that correspond to moment sequences, i.e. such that \eqref{eq:momriesz} holds for some $X \subset \R^n$ and some $\mu \in \Mp{X}$:

Given a moment sequence $\bm \in \R^{\N^n}$  and a polynomial $h \in \R[x]$, a localizing bilinear functional can be defined by $L_{h \bm}: \R[x] \times \R[x] \longrightarrow \R$ by
\begin{equation}
    L_{h\bm} = (p,q) \longmapsto L_\bm(hpq).
\end{equation}

The cone of polynomials $\R[x]$ can be treated as a vector space with a linear basis $(e_i)_{i\in\N}$ (e.g. $e_i(x) = x^{\alpha_i}$ with $\{\alpha_i\}_{i\in\N} = \N^n$ and a monomial ordering $|\alpha_i| < |\alpha_j| \Rightarrow i<j$). The bilinear functional $L_{h \bm}$ has a representation as a quadratic form operating on an infinite-size localizing matrix $\M[h\bm]$ with $\M[h\bm] = (L_\bm(h \, e_i \, e_j))_{i,j \in \N}$. The expression for $\M[h\bm]$ when $\{e_j\}$ is the set of ordered monomials leads to
\begin{equation}
    \label{eq:repmat}
    \M[h\bm]_{i,j} 
    = L_\bm(h x^{\alpha_i} x^{\alpha_j})
    = \sum_{\beta\in\N^n} h_\beta \bm_{\alpha_i+\alpha_j+\beta}.
\end{equation}

% For $\bm \in \R^{\N^n}$ and $h \in \R[x]$, we define the localizing bilinear functional $L_{h \bm}: \R[x] \times \R[x] \longrightarrow \R$ by
% \begin{subequations}
% \begin{equation}
%     L_{h\bm} = (p,q) \longmapsto L_\bm(hpq).
% \end{equation}
% Equipping $\R[x]$ with a linear basis $(e_i)_{i\in\N}$ (e.g. $e_i(x) = x^{\alpha_i}$ with $\{\alpha_i\}_{i\in\N} = \N^n$, an ordering of monomials such that $|\alpha_i| < |\alpha_j| \Rightarrow i<j$) yields an infinite size matrix representation of $L_{h\bm}$, which we call the localizing matrix $\M[h\bm] = (L_\bm(h \, e_i \, e_j))_{i,j \in \N}$. For instance, if $h(x) = \sum_{\beta\in\N^n} h_\beta x^\beta$, using a basis of monomials with nondecreasing degrees yields, for all $i,j \in \N$
% \begin{equation}
%     \label{eq:repmat}
%     \M[h\bm]_{i,j} 
%     = L_\bm(h x^{\alpha_i} x^{\alpha_j})
%     = \sum_{\beta\in\N^n} h_\beta \bm_{\alpha_i+\alpha_j+\beta}.
% \end{equation}
% \end{subequations}

\Iac{BSA} set $\K = \{x \mid h_k(x) \geq 0: \ k = 1..N_c\}$ is a set defined by a finite number of bounded-degree polynomial inequality constraints $h_k(x) \geq 0$.
We refer to any set $\K$ that has $h_1(x) = 1$ and $h_{N_c} = R - \norm{x}_2^2$ with $R>0$ as satisfying `ball constraints,' and note that such an $R>0$ can be chosen for any compact set. 
For any set obeying ball constraints, then there exists a representing measure $\mu$ for the sequence of numbers (pseudomoments) $\bm$ if each localizing matrix is \ac{PSD}:
\begin{equation}
    \label{eq:truncputinar}
    \forall \N, k=1..N_c, \quad \M[h_k\bm] \succeq 0.
\end{equation}

For a finite $d \in \N$, a sufficient condition for \eqref{eq:truncputinar} to hold is that the upper-left corner of $\M[h_k \bm]$ containing moments up to degree $2d$ (expressed as $\M_d[h_k \bm]$) is \ac{PSD}. The truncated matrix  $\M_d[h_k\bm]$ represents the bilinear operator $L_{h \bm}$ in the finite-dimensional vector space $\R[x]_{\leq d}$, and has size $\binom{n+d}{d}$ when $\{e_i\}$ is a monomial basis.

We will define the degree-$d$ block-diagonal matrix composed of localizing matrices for constraints of $\K$ as
% For notational convenience, we define the block diagonal synthetic matrix
\begin{equation}
    \M_d[\K\bm] = \mathrm{diag}(\M_{d-\lceil d_k/2 \rceil} [h_k \bm])_{k=1..N_c}. \label{eq:block_synthetic}
\end{equation}

The moment-\ac{SOS} hierarchy is the process of raising the degree $d$ to $\infty$ while imposing that the matrix in \eqref{eq:block_synthetic} is \ac{PSD}.

\subsection{Chance-Peak Moment Setup}

% {Matteo: the degree bounds did not match, so I modified them accordingly.}

We require polynomial-structuring assumptions in order to approximate \eqref{eq:var_meas_soc} and \eqref{eq:peak_cvar_meas} using the moment-\ac{SOS} hierarchy:

\begin{itemize}
    \item[A8] The sets $X_0$ and $X$ are both \ac{BSA} with ball constraints.
    \item[A9] The generator $\Lie$ satisfies $\forall v \in \R[t, x], \ \Lie v \in \R[t, x]$.
    \item[A10] The objective function $p(x)$ is also polynomial.
\end{itemize}

% Let $\bm^0$ be the moment sequence associated to the fixed initial measure $\mu_0 \in \Mp{X_0}$. 
Let $(\bm, \bm^\tau)$ be pseudomoments for the optimization variables $(\mu, \mu_\tau)$. For each $\alpha \in \N^n$ and $\beta \in \N$ producing the monomial $x^\alpha t^\beta$, we define the operator $\mathcal{D}_{\alpha \beta}$ as the moment expression for the generator $\Lie$ with
\begin{equation}
    \label{eq:dyn_mom}
    \mathcal{D}_{\alpha \beta}(\bm, \bm^\tau) =
        \bm^\tau_{\alpha\beta} - L_\bm(\Lie(x^\alpha t^\beta)).
\end{equation}

We further define a dynamics degree $D$ as a function of the degree $d$ such that 
\begin{align}
    \deg x^\alpha t^\beta \leq 2d \implies \deg \Lie x^\alpha t^\beta \leq 2D.
\end{align}

% In the rest of this section, we will assume that $2 d \geq \deg p$.

% \urg{old below}

% Given an initial measure $\mu_0 \in \Mp{X_0}$, let $(\bm, \bm^\tau)$ be moment sequences corresponding to the measures $(\mu, \mu_\tau)$ respectively. For each monomial $x^\alpha t^\beta$ with $\alpha \in \N^n, \beta \in \N$, define the operator $\mathcal{D}_{\alpha \beta}(\bm, \bm^\tau)$ as the moment counterpart of the operator involved in Dynkin's formula \eqref{eq:dynkin}
% \begin{equation}
%     \label{eq:dyn_mom}
%     \mathcal{D}_{\alpha \beta}(\bm, \bm^\tau) =
%         \bm^\tau_{\alpha\beta} - L_\bm(\Lie(x^\alpha t^\beta)).
% \end{equation}

%\begin{align}
%\label{eq:dyn_mom}
%    \inp{x^\alpha}{\mu_0} \delta_{\beta 0} + \inp{\Lie( x^\alpha t^\beta)}{\mu} - \inp{x^\alpha t^\beta}{\mu_\tau} &= 0.
%\end{align}

% Define the dynamics degree $D$ as 
% \begin{equation}
%     D= d + \lceil \max(\deg f-1, 2 \deg g - 2)/2 \rceil.
% \end{equation}
% so that for $(\alpha,\beta) \in \N^{n+1}$, $$|\alpha| + \beta \leq 2d \Rightarrow \deg(\Lie(x^\alpha t^\beta)) \leq 2D.$$
% %The expression $\mathcal{D}_{\alpha \beta}$ exactly involves the moments up to degree $2d$.

\subsection{Tail-Bound Moment Program}

\begin{prob}
For any $d \in \N$ such that $2d \geq \deg p$, the order-$d$ \ac{LMI} in pseudomoments to upper-bound \eqref{eq:var_meas_soc} given $\mu_0$ is
% The order-$d$ moment\ac{LMI}
% For $d \geq \deg(p)$, the order-$d$ moment \ac{LMI} that upper-bounds problem \eqref{eq:var_meas_soc}, given $\mu_0$ is
\begin{subequations}
\label{eq:chance_lmi}
\begin{align}
    p^*_{r, d} = & \max \quad r {c} +  L_{\bm^\tau} p  \label{eq:chance_lmi_obj} \\
    & {c} \in \R, \bm \in \R^{^{\binom{2D+n+1}{n+1}}}, \bm^\tau \in \R^{^{\binom{2d+n+1}{n+1}}} \notag \\
    &\mathcal{D}_{\alpha \beta}(\bm, \bm^\tau) = \delta_{\beta 0} \langle x^\alpha , \mu_0\rangle \notag \\
    & \qquad \forall (\alpha, \beta) \in \N^{n+1} \quad \text{s.t.} \quad |\alpha|+\beta \leq 2d \label{eq:chance_lmi_flow}\\
    & {y}= [1-L_{\bm^\tau}(p^2), \ 2 {c}, \ 2 L_{\bm^\tau}p] \\
    & ({y}, 1 + L_{\bm^\tau}(p^2)) \in {\mathbb{L}}^3 \label{eq:var_lmi_con_soc}\\
    &\M_d[([0, T] \times X)\bm^\tau] \succeq 0 \label{eq:lmitau} \\
    &\M_D[([0, T] \times X)\bm] \succeq 0, \label{eq:lmiocc}
\end{align}
\end{subequations}
\end{prob}
in which the Kronecker symbol $\delta_{\beta 0}$ is $1$ if $\beta = 0$ and is $0$ otherwise.
Constraint \eqref{eq:chance_lmi_flow} is a finite-dimensional truncation of the infinite-dimensional martingale relation \eqref{eq:var_meas_lie_soc}.

% \urg{old below}
% where $\delta_{\beta 0}$ denotes the Kronecker symbol that is $1$ if $\beta = 0$ and $0$ otherwise. Note that constraint  \eqref{eq:chance_lmi_flow} is a finite-dimensional truncation of the infinite-dimensional \eqref{eq:var_meas_lie_soc}.

We now prove that all optimization variables of \eqref{eq:var_meas_soc} are bounded in order to prove convergence of \eqref{eq:chance_lmi} as $d\rightarrow \infty$.

\begin{lem}
\label{lem:moment_bound}
The variables $(\mu, \mu_\tau, {c})$ in any feasible solution of \eqref{eq:var_meas_soc} are bounded under A1-A7.
\end{lem}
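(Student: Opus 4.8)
The plan is to bound each of the three quantities $\mu$, $\mu_\tau$, and the scalar $c$ in turn, relying only on the compactness assumptions A1, the continuity A3, and the elementary identities $\Lie 1 = 0$ and $\Lie t = 1$ guaranteed by A5 together with \eqref{eq:generator_lim}.

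First I would control the mass of $\mu_\tau$. Pairing the martingale constraint \eqref{eq:var_meas_lie_soc} with the constant test function $v = 1 \in \cs$ (available by A5), and using $\Lie 1 = 0$ together with $\inp{1}{\mu_0} = 1$ from A4, gives $\inp{1}{\mu_\tau} = \inp{1}{\mu_0} + \inp{\Lie 1}{\mu} = 1$. Hence $\mu_\tau$ is automatically a probability measure; since it is supported on the compact set $[0, T] \times X$ (A1), its total variation norm equals $1$ and is in particular bounded.

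Next I would bound the mass of $\mu$ by testing against $v(t, x) = t$, which lies in $\cs$ and satisfies $\Lie t = 1$. Because this $v$ vanishes at $t = 0$, pairing it with \eqref{eq:var_meas_lie_soc} collapses the initial term and yields $\inp{t}{\mu_\tau} = \inp{1}{\mu}$. As $\mu_\tau$ is a probability measure supported where $0 \leq t \leq T$, the left-hand side is at most $T$, so $\inp{1}{\mu} = \inp{t}{\mu_\tau} \leq T$. Thus $\mu$ has mass bounded by $T$ and, being supported on the compact set $[0, T] \times X$, is bounded in total variation. Finally I would bound $c$ from the cone constraint \eqref{eq:var_meas_con_soc}: by the algebra already carried out in the proof of Lemma \ref{lem:sqrt}, membership $(y, 1 + \inp{p^2}{\mu_\tau}) \in \mathbb{L}^3$ is equivalent to $c^2 + \inp{p}{\mu_\tau}^2 \leq \inp{p^2}{\mu_\tau}$, whence $c^2 \leq \inp{p^2}{\mu_\tau}$. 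Since $p$ is continuous on the compact set $X$ (A1, A3) it is bounded there, say $\abs{p} \leq M$, and then $\inp{p^2}{\mu_\tau} \leq M^2 \inp{1}{\mu_\tau} = M^2$ because $\mu_\tau$ is a probability measure, giving $\abs{c} \leq M$.

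The only nonroutine choice is the use of the test function $v = t$ to convert the a priori unbounded mass of the occupation measure $\mu$ into the finite quantity $\inp{t}{\mu_\tau} \leq T$; every other bound follows directly from compactness and the second-order cone constraint. I note that assumptions A2, A6, A7 are not actually invoked by this boundedness argument, but are retained for consistency with the feasibility and no-relaxation-gap results (Lemma \ref{lem:no_relaxation_gap}) underlying the surrounding hierarchy.
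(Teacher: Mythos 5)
Your proof is correct and follows essentially the same route as the paper: test functions $v=1$ and $v=t$ in the martingale constraint to bound the masses of $\mu_\tau$ and $\mu$, compactness of $[0,T]\times X$ for support, and the second-order cone constraint together with boundedness of $p$ on $X$ to control $c$. Your treatment of $c$ is slightly more explicit than the paper's (which leaves the step from finiteness of $\inp{p}{\mu_\tau}$ and $\inp{p^2}{\mu_\tau}$ to boundedness of $c$ implicit), but the argument is the same.
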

\begin{proof}
A measure is bounded if all of its moments are bounded (finite). A sufficient condition for boundedness of measures to hold is if the measure is supported on a compact set and that it has finite mass. Compact support of $(\mu, \mu_\tau)$ is ensured by A1. Substitution of $v(t, x) = 1$ into \eqref{eq:var_meas_lie_soc} leads to $\inp{1}{\mu_\tau}=\inp{1}{\mu_0} = 1$ by A4-5. The moments $\inp{p}{\mu_\tau}$ and $\inp{p^2}{\mu_\tau}$ are therefore finite under A1 and A3, which implies that $c$ is bounded as well.
Applying $v(t, x) = t$ yields $\inp{1}{\mu} \leq \inp{t}{\mu_\tau} \leq T$, which implies that $\inp{1}{\mu}$ is finite by A5. All variables are therefore bounded.

% \urg{old below}
% A sufficient condition for a measure to be bounded (in the sense that all moments are bounded) is that it has finite mass and is supported on a compact set. Compactness of $[0, T] \times X$ holds by A1. Assumption A4 imposes that $\inp{1}{\mu_0} = 1$. By substituting $v(t, x) = 1$ \eqref{eq:var_meas_lie_soc}, it holds that $\inp{1}{\mu_\tau}=\inp{1}{\mu_0} = 1$. Performing the same step with $v(t, x)= t$ yields $T \geq \inp{t}{\mu_\tau} = \inp{1}{\mu}$.
% It therefore holds that $\inp{p}{\mu_\tau}$ and $\inp{p^2}{\mu_\tau}$ are bounded. The \ac{SOC} constraint \eqref{eq:var_meas_con_soc} ensures that ${c}$ is finite, demonstrating that all variables are bounded.
\end{proof}

\begin{thm}
\label{thm:lmi_converge}
Under assumptions A1-A10, \eqref{eq:chance_lmi} inherits the strong duality property of its infinite-dimensional counterpart \eqref{eq:var_meas_soc}, and its optima will converge to \eqref{eq:var_meas_soc} i.e. $\lim_{d \rightarrow \infty} p^*_{r, d} = p^*_r$.
\end{thm}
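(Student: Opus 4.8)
The plan is to prove the two claims separately. For strong duality, I would specialize the general linked-semidefinite-constraint duality framework of Appendix~\ref{app:duality_general} to the finite-dimensional moment problem \eqref{eq:chance_lmi}, since the hypotheses giving strong duality in the infinite-dimensional case carry over verbatim. The essential input is Lemma~\ref{lem:moment_bound}: it shows the measure variables have uniformly bounded moments, so the feasible pseudomoment set of \eqref{eq:chance_lmi} is bounded, while the localizing-matrix constraints \eqref{eq:lmitau}--\eqref{eq:lmiocc} and the \ac{SOC} constraint \eqref{eq:var_lmi_con_soc} are closed. Boundedness of the feasible set supplies the constraint qualification that rules out a duality gap and forces dual attainment, so \eqref{eq:chance_lmi} inherits the strong duality of its counterpart \eqref{eq:var_meas_soc}.

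For convergence I would run the standard moment--\ac{SOS} argument. First, every feasible $(\mu,\mu_\tau)$ of \eqref{eq:var_meas_soc} induces a feasible point of \eqref{eq:chance_lmi}: its genuine moments make all localizing matrices \ac{PSD}, the truncated martingale relation \eqref{eq:chance_lmi_flow} follows from \eqref{eq:var_meas_lie_soc} tested on the monomials $x^\alpha t^\beta$ (polynomial by A9), and the \ac{SOC} constraint is unchanged. Hence $p^*_{r,d}\ge p^*_r$ for every admissible $d$, and since increasing $d$ only adds constraints, $\{p^*_{r,d}\}$ is non-increasing; the limit $\bar p = \lim_{d\to\infty}p^*_{r,d}\ge p^*_r$ therefore exists. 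Next, by Lemma~\ref{lem:moment_bound} and the ball constraints of A8 the optimal pseudomoments $(\bm^{(d)},\bm^{\tau,(d)})$ and scalars $c^{(d)}$ are bounded \emph{uniformly in $d$}, so a diagonal extraction produces a subsequence converging entrywise to some $(\bm^*,\bm^{\tau,*},c^*)$. Passing the \ac{PSD} constraints \eqref{eq:truncputinar} to the limit and invoking Putinar's Positivstellensatz---valid because A8 furnishes an Archimedean ball constraint---yields genuine representing measures $\mu^*,\mu_\tau^*\in\Mp{[0,T]\times X}$ for these limiting sequences.

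It then remains to verify that the limit is feasible and optimal. Each truncated relation \eqref{eq:chance_lmi_flow} passes to the limit monomial-by-monomial, so $\mu_\tau^*=\delta_0\otimes\mu_0+\Lie^\dagger\mu^*$ holds against every monomial; combining A9 with the density assumption A7 extends this to all test functions $v\in\cs$, recovering \eqref{eq:var_meas_lie_soc}. The \ac{SOC} constraint \eqref{eq:var_lmi_con_soc} passes to the limit by continuity, since $L_{\bm^\tau}p$ and $L_{\bm^\tau}(p^2)$ converge to $\inp{p}{\mu_\tau^*}$ and $\inp{p^2}{\mu_\tau^*}$. Thus $(\mu^*,\mu_\tau^*,c^*)$ is feasible for \eqref{eq:var_meas_soc} with objective value $\bar p$, giving $\bar p\le p^*_r$; together with $\bar p\ge p^*_r$ this proves $\lim_{d\to\infty}p^*_{r,d}=p^*_r$.

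The delicate step is the limit-passing of the martingale constraint: enforcing \eqref{eq:chance_lmi_flow} only on monomials of degree at most $2d$ must, in the limit, recover the full relation \eqref{eq:var_meas_lie_soc} against every $v\in\cs$. This is precisely where A7 (density of $\{(v_k,\Lie v_k)\}$ in the bounded-pointwise closure) together with A9 ($\Lie$ preserving polynomials) is indispensable, and some care is needed to reconcile that bounded-pointwise closure with the entrywise (weak-$*$) convergence of the truncated moment sequences.
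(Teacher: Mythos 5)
Your proposal is correct and follows essentially the same route as the paper: the paper proves strong duality of \eqref{eq:chance_lmi} by the finite-dimensional analogue of the Appendix~\ref{app:duality_general} boundedness argument (citing \cite[Proposition 6]{tacchi2022convergence}) and obtains convergence from \cite[Corollary 8]{tacchi2022convergence} via Lemma~\ref{lem:moment_bound}, which is precisely the relaxation--monotonicity--diagonal-extraction--Putinar--limit argument you spell out, including the use of the ball constraints of A8 for uniform pseudomoment bounds and of A7/A9 to pass the truncated martingale relation to all test functions.
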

\begin{proof}
Strong duality is proved almost identically in the finite dimensional setting as in the infinite dimensional setting of Theorem \ref{thm:cdc_strong_dual} by using the same arguments as in the proof of \cite[Proposition 6]{tacchi2022convergence}.

Convergence is a direct consequence of \cite[Corollary 8]{tacchi2022convergence} (when extending to the case with finite-dimensional \ac{SOC} variables) through Lemma \ref{lem:moment_bound}.
\end{proof}

\begin{rmk}
The relation $p^*_d \geq p^*_r \geq P^*_r$ will still hold when $[0, T] \times X$ is noncompact (violating A1 and A5), but it may no longer occur that $\lim_{d \rightarrow \infty} p^*_{r, d} = p^*_r$ (the conditions Lemma \ref{thm:lmi_converge} will no longer apply).
\end{rmk}

\subsection{CVAR Moment Program}

Let $(\bm, \bm^\tau, {\mathbf{n}}, \hat{{\mathbf{n}}})$ be respective moment sequences of the measures $(\mu, \mu_\tau, {\nu, \hat{\nu}})$. 
We define the operator $\mathcal{E}_{k}(\bm^\tau, {\mathbf{n}}, \hat{{\mathbf{n}}})$ for ${k} \in \N$ as the moment counterpart of the operator in constraint \eqref{eq:peak_cvar_meas_cvar}:
\begin{align}
\label{eq:cvar_mom}
    \mathcal{E}_{k}(\bm^\tau, {\mathbf{n}}, \hat{{\mathbf{n}}}) = L_{\bm^\tau}(p(x)^{k}) - \epsilon {\mathbf{n}}_{k} - \hat{{\mathbf{n}}}_{k}.
\end{align}

The associated \ac{CVAR} degree ${\Delta}$ is
\begin{align}
    {\Delta} = \lfloor d/ \deg p \rfloor. 
\end{align}

%The expression $\mathcal{D}_{\alpha \beta}$ exactly involves the moments up to degree $2d$.

\begin{prob}
For each degree $d \geq \deg p$, the order-$d$ moment \ac{LMI} that upper-bounds the \ac{CVAR} \ac{LP} \eqref{eq:peak_cvar_meas} given the distribution  $\mu_0$ is
\begin{subequations}
\label{eq:cvar_lmi}
\begin{align}
    p^*_{c, d} = & \max L_{{\mathbf{n}}} q  \label{eq:cvar_lmi_obj} \\
    & \bm \in \R^{^{\binom{2D+n+1}{n+1}}}, \bm^\tau \in \R^{^{\binom{2d+n+1}{n+1}}} \notag \\
    & {\mathbf{n}}\in \R^{2{\Delta}+1} , \  \hat{{\mathbf{n}}}\in \R^{2{\Delta}+1} \notag \\
    & \forall (\alpha, \beta) \in \N^{n+1} \quad \text{s.t.} \quad |\alpha|+\beta: \leq 2d  \nonumber \\
    &\qquad \mathcal{D}_{\alpha \beta}(\bm, \bm^\tau) = \delta_{\beta 0} \langle x^\alpha , \mu_0\rangle  \qquad \label{eq:cvar_lmi_flow}\\
    & \mathcal{E}_{k}(\bm^\tau, {\mathbf{n}}, \hat{{\mathbf{n}}}) = 0 \qquad \forall {k} \in 0..(2{\Delta}) \label{eq:cvar_lmi_cvar}\\
    &{\mathbf{n}}_0 = 1 \\
    &\M_d[([0, T] \times X)\bm^\tau]\succeq  0\label{eq:lmitauocc}\\
    &\M_D[([0, T] \times X)\bm] \succeq  0  \\
    & \M_{\Delta}[{[\hat{p},\check{p}]\mathbf{n}}] \succeq 0 \label{eq:lmicvar1}\\
    & \M_{\Delta}[{[\hat{p},\check{p}]\mathbf{n}}] \succeq 0 \label{eq:lmicvar2}.
\end{align}
\end{subequations}
\end{prob}
The symbol $\delta_{\beta 0}$ is the Kronecker Delta ($1$ if $\beta = 0$ and $0$ otherwise). Constraints \eqref{eq:cvar_lmi_flow}  and \eqref{eq:cvar_lmi_cvar} are finite-dimensional truncations of constraints \eqref{eq:peak_cvar_meas_flow} 
 and \eqref{eq:peak_cvar_meas_cvar}  respectively.

\begin{rmk}
    The redundant support constraints in \eqref{eq:lmicvar1},\eqref{eq:lmicvar2} are added to ensure that the Archimedean condition is satisfied for each moment sequence support.
\end{rmk}

\begin{thm}
\label{thm:bounded_mass_cvar}
    All measures $\mu, \mu_\tau, {\nu, \hat{\nu}}$ are bounded under A1-A4.
\end{thm}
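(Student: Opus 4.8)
The plan is to mimic the structure of Lemma~\ref{lem:moment_bound}, using the principle recalled there that a nonnegative measure supported on a compact set with finite mass has all of its moments finite and is therefore bounded. First I would dispose of $\mu$ and $\mu_\tau$, whose defining relation \eqref{eq:peak_cvar_meas_flow} is identical to \eqref{eq:var_meas_lie_soc}. Substituting the test function $v(t,x)=1$ into \eqref{eq:peak_cvar_meas_flow}, using $\Lie 1 = 0$ (A5) and $\inp{1}{\mu_0}=1$ (A4), gives $\inp{1}{\mu_\tau}=1$, so $\mu_\tau$ is a probability measure; substituting $v(t,x)=t$ and using $\Lie t = 1$ yields $\inp{1}{\mu}=\inp{t}{\mu_\tau}\le T$. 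Compactness of $[0,T]\times X$ (A1) then supplies compact support, so $\mu$ and $\mu_\tau$ are bounded by exactly the argument of Lemma~\ref{lem:moment_bound}.

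The genuinely new part is bounding the line measures $\nu$ and $\hat{\nu}$, which a priori live on the noncompact space $\R$. The crucial observation is the domination encoded in the pushforward constraint \eqref{eq:peak_cvar_meas_cvar}: since $\epsilon\nu + \hat{\nu} = p_\#\mu_\tau$ with $\epsilon>0$ and $\nu,\hat{\nu}\in\Mp{\R}$ both nonnegative, I would deduce $\epsilon\nu \le p_\#\mu_\tau$ and $\hat{\nu}\le p_\#\mu_\tau$ in the domination order. Because $\mu_\tau$ is supported on the compact set $[0,T]\times X$ and $p$ is continuous on $X$ (A3), the image $p(X)\subset\R$ is compact, so $p_\#\mu_\tau$ is supported in this compact interval. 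Domination then forces $\supp{\nu},\supp{\hat{\nu}}\subseteq p(X)$, giving both measures compact support.

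For the masses, \eqref{eq:peak_cvar_meas_prob} directly gives $\inp{1}{\nu}=1$, while pairing \eqref{eq:peak_cvar_meas_cvar} with the constant function $1$ and using $\inp{1}{p_\#\mu_\tau}=\inp{1}{\mu_\tau}=1$ yields $\inp{1}{\hat{\nu}}=1-\epsilon$. Both masses are finite, so combined with the compact support just established all four measures $\mu,\mu_\tau,\nu,\hat{\nu}$ have finite mass and compact support, hence all their moments are finite, which is the claimed boundedness.

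The step I expect to be the main obstacle is precisely the compact-support argument for $\nu$ and $\hat{\nu}$: unlike $\mu$ and $\mu_\tau$, these measures are not directly confined to a compact set by any assumption, and their boundedness must instead be transported from the compactness of $p(X)$ through the pushforward/domination relation \eqref{eq:peak_cvar_meas_cvar}. I also note that, although the statement is phrased under A1--A4, the bound on $\mu$ implicitly invokes $\Lie 1 = 0$ and $\Lie t = 1$ from A5, exactly as in Lemma~\ref{lem:moment_bound}.
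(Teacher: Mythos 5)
Your proof is correct and follows essentially the same route as the paper's: compact support of $\mu,\mu_\tau$ from A1 and of $\nu,\hat\nu$ via nonnegativity and the domination implicit in $\epsilon\nu+\hat\nu=p_\#\mu_\tau$ (the paper writes the compact interval as $[\min_X p,\max_X p]$ where you write $p(X)$), combined with the same mass computations using the test functions $1$ and $t$ and the identity $\inp{1}{\hat\nu}=1-\epsilon$. Your closing remark that the argument implicitly invokes A5 is a fair observation about the theorem's stated hypotheses.
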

\begin{proof}
    This boundedness will be proved by showing that the mass of each measure is bounded and their support sets are compact.

    \textit{Compactness:} The measures $\mu_\tau, \mu$ have compact support under A1. The quantities ${\hat{p}} = \min_{x \in X} p(x)$ and ${\check{p}} = \max_{x \in X} p(x)$ are each finite and attained under A1 and A3. The measure $p_\# \mu_\tau$ has support inside the compact set $[{\hat{p}}, {\check{p}}]$ given that $\supp{\mu_\tau} \subseteq [0, T] \times X$. Both ${\nu}$ and ${\hat{\nu}}$ are nonnegative measures with $\epsilon > 0$, so \eqref{eq:peak_cvar_meas_cvar} {ensures} that $\supp{\nu} \subseteq [{\hat{p}}, {\check{p}}]$ and $ \supp{\hat{\nu}} \subseteq [{\hat{p}}, {\check{p}}]$.

    \textit{Bounded Mass:} The mass of  ${\nu}$ is set to 1 by \eqref{eq:peak_cvar_meas_prob}. Passing a test function $v(t, x) = 1$ through \eqref{eq:peak_cvar_meas_flow} results in $\inp{1}{\mu_\tau} = \inp{1}{\mu_0}$, which equals 1 by A4. Substitution of $v(t, x) = t$ through the same constraint yields $0 + \inp{1}{\mu} = \inp{t}{\mu_\tau} \leq T$. Given that $\mu_\tau$ is a probability measure, it holds that $p_\# \mu_\tau$ is also a probability measure. Finally, we analyze the mass of {$\hat\nu$ with} \eqref{eq:peak_cvar_meas_cvar}: 
    %\begin{subequations}
    %\begin{align*}
    %    \inp{1}{p_\#\mu_\tau} &= \epsilon \inp{1}{\nu} + \inp{1}{\hat{\nu}} 
    %    \\
    %    1 &= \epsilon + \inp{1}{\hat{\nu}}  \\
    %    1-\epsilon &= \inp{1}{\hat{\nu}}.
    %\end{align*}
    {
    \begin{align*}
        \inp{1}{\hat{\nu}} &= \inp{1}{p_\#\mu_\tau - \epsilon \nu}
        \\
        &= \inp{1}{p_\#\mu_\tau} - \epsilon\inp{1}{\nu}\\
        &= 1-\epsilon < \infty.
    \end{align*}
    }
    %\end{subequations}
    All nonnegative measures have compact support and bounded masses under A1-A4, proving the theorem.
\end{proof}

\begin{thm}
\label{thm:cantelli_cvar_relation}
    The Cantelli $(r = \sqrt{1-1/\epsilon})$ program from \eqref{eq:var_meas_soc} with objective $p^*_r$ will upper-bound the \ac{CVAR} program \eqref{eq:peak_cvar_meas} by $p^*_r \geq p^*_c$.
\end{thm}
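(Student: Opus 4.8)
The plan is to exploit that both measure programs \eqref{eq:peak_cvar_meas} and \eqref{eq:var_meas_soc} optimize over the \emph{same} feasible set of pairs $(\mu,\mu_\tau)$, cut out by the single shared flow constraint \eqref{eq:peak_cvar_meas_flow} (equivalently \eqref{eq:var_meas_lie_soc}), and to reduce each objective to a functional of the terminal measure $\mu_\tau$ alone. First I would record that every feasible $\mu_\tau$ is a probability measure: testing the flow constraint against $v(t,x)=1$ and using A4--A5 ($\Lie 1 = 0$, $\inp{1}{\mu_0}=1$) gives $\inp{1}{\mu_\tau}=1$, exactly as in the proof of Lemma \ref{lem:moment_bound}. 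Hence for fixed feasible $\mu_\tau$ the \ac{RV} $V = p$ with law $\psi = p_\#\mu_\tau$ is well-defined, with mean $m=\inp{p}{\mu_\tau}$ and variance $\sigma^2 = \inp{p^2}{\mu_\tau} - \inp{p}{\mu_\tau}^2$. By Theorem \ref{thm:cvar_dom}, optimizing the inner variables $(\nu,\hat\nu)$ of \eqref{eq:peak_cvar_meas} subject to \eqref{eq:peak_cvar_meas_prob}--\eqref{eq:peak_cvar_meas_cvar} for this fixed $\mu_\tau$ yields exactly $\mathrm{ES}_\epsilon(V)$; meanwhile, by Theorem \ref{thm:upper_bound_socp}, program \eqref{eq:var_meas_soc} has the value of \eqref{eq:peak_chance_meas}, whose objective \eqref{eq:peak_chance_meas_obj} for the same $\mu_\tau$ with $r = {r_C} = \sqrt{1/\epsilon-1}$ is precisely $m + {r_C}\,\sigma = \mathrm{VC}_\epsilon(V)$. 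Thus $p^*_c = \sup_{\mu_\tau}\mathrm{ES}_\epsilon(V)$ and $p^*_r = \sup_{\mu_\tau}\mathrm{VC}_\epsilon(V)$ over one and the same feasible set, so it suffices to establish the pointwise inequality $\mathrm{ES}_\epsilon(V) \le \mathrm{VC}_\epsilon(V)$ for every such $V$ and then take suprema.

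The core step is therefore the moment inequality $\mathrm{ES}_\epsilon(V) \le m + \sigma\sqrt{1/\epsilon - 1}$, i.e. the statement that the Cantelli \ac{VAR} bound also dominates the expected shortfall for any distribution with prescribed first two moments. I would prove this from the parametric representation \eqref{eq:cvar_param}, $\mathrm{ES}_\epsilon(V) = \min_\lambda\{\lambda + \tfrac{1}{\epsilon}\mathbb{E}[(V-\lambda)_+]\}$, so that any admissible $\lambda$ furnishes an upper bound. Writing $(V-\lambda)_+ = \tfrac12(\abs{V-\lambda} + (V-\lambda))$ and applying Cauchy--Schwarz (Jensen), $\mathbb{E}\abs{V-\lambda} \le \sqrt{\mathbb{E}[(V-\lambda)^2]} = \sqrt{\sigma^2 + (m-\lambda)^2}$, gives
\[
    \mathrm{ES}_\epsilon(V) \le \lambda + \tfrac{1}{2\epsilon}\left(\sqrt{\sigma^2+(m-\lambda)^2} + (m-\lambda)\right).
\]
Substituting $s = \lambda - m$ and minimizing the one-dimensional function $g(s) = s + \tfrac{1}{2\epsilon}(\sqrt{\sigma^2+s^2}-s)$ over $s\in\R$ — whose stationarity condition reduces to $s/\sqrt{\sigma^2+s^2} = 1-2\epsilon$, which in turn yields $\sqrt{\sigma^2+s^2}=\sigma/(2\sqrt{\epsilon(1-\epsilon)})$ — produces the optimal value $\sigma\sqrt{1/\epsilon-1}$, hence $\mathrm{ES}_\epsilon(V) \le m + \sigma\sqrt{1/\epsilon-1} = \mathrm{VC}_\epsilon(V)$.

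Taking the supremum over the common feasible set of $(\mu,\mu_\tau)$ then delivers $p^*_c \le p^*_r$, as claimed. The only genuine obstacle is the scalar minimization in the last display: one must check that the minimizer $s^*$ is admissible for every $\epsilon \in (0,1)$ (its sign tracks that of $1-2\epsilon$) and that the closed form collapses to the Cantelli constant, for which the algebraic identity $4\epsilon(1-\epsilon)+(1-2\epsilon)^2 = 1$ is the decisive simplification; the remaining manipulations are routine. (For consistency with \eqref{eq:tail_bounds_constant}, the constant in the theorem statement should read $r = \sqrt{1/\epsilon - 1}$.)
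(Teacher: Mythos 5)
Your proof is correct, and it follows the same overall strategy as the paper's --- reduce both programs to a supremum over the common feasible set of pairs $(\mu,\mu_\tau)$ and establish the pointwise moment inequality $\mathrm{ES}_\epsilon(V) \le \mathbb{E}[V] + \sigma\sqrt{1/\epsilon-1}$ --- but it differs in how that key inequality is obtained. The paper simply cites Equation (5) of the worst-case result of \cite{vcerbakova2006worst}, which says that the Cantelli bound is the supremum of the expected shortfall over all distributions with the prescribed first two moments; you instead derive the inequality from scratch via the parametric representation \eqref{eq:cvar_param}, the identity $(V-\lambda)_+ = \tfrac12\left(|V-\lambda|+V-\lambda\right)$, Cauchy--Schwarz, and an explicit one-dimensional minimization. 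Your computation checks out: the stationarity condition $s/\sqrt{\sigma^2+s^2}=1-2\epsilon$ gives $\sqrt{\sigma^2+s^2}=\sigma/(2\sqrt{\epsilon(1-\epsilon)})$ and optimal value $\sigma\sqrt{1/\epsilon-1}$ via the identity $4\epsilon(1-\epsilon)+(1-2\epsilon)^2=1$, and since $g$ is convex and coercive for $\epsilon\in(0,1)$ the stationary point is indeed the global minimizer, so taking any admissible $\lambda$ in \eqref{eq:cvar_param} legitimately yields the upper bound. What the citation buys the paper is brevity plus the extra information that the bound is attained in the worst case over the moment class; what your derivation buys is a self-contained argument and an explicit justification --- left implicit in the paper --- that the two programs share the same feasible set in $(\mu,\mu_\tau)$ (both probability measures by testing $v=1$ against the flow constraint) and that the inner optimizations collapse to $\mathrm{ES}_\epsilon(V)$ (via Theorem \ref{thm:cvar_dom}) and $m+r_C\sigma$ (via Lemma \ref{lem:sqrt}) respectively. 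You are also right that the constant in the theorem statement should read $r=\sqrt{1/\epsilon-1}$, consistent with \eqref{eq:tail_bounds_constant} and with \eqref{eq:cantelli_cvar_relation}.
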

\begin{proof}
    This holds by Equation (5) of \cite{vcerbakova2006worst}. For a general univariate {random variable $V \sim \psi \in \Mp{\R}$} with variance $\sigma = \sqrt{\mathbb{E}[V^2]-\mathbb{E}[V]^2}$, the Cantelli bound is related to \ac{CVAR} by
    \begin{align} \label{eq:cantelli_cvar_relation}
        {\mathbb{E}[V]} + \sigma \sqrt{1/\epsilon - 1} \geq { \mathrm{ES}_\epsilon(V) \geq \mathrm{VaR}_\epsilon(V)}.
    \end{align}

    The Cantelli bound obtains the worst-case \ac{CVAR} among all possible distributions agreeing with the given first and second moments \cite{dupacova1987minimax}.
\end{proof}

 \begin{thm}
     The upper bounds of \eqref{eq:cvar_lmi} will satisfy $p^*_d \geq p^*_{d+1} \geq ... \geq p^*$ and $\lim_{d \rightarrow \infty} p^* = P^*$ under assumptions A1-A10.
 \end{thm}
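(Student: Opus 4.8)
The plan is to establish the monotone convergence of the CVAR moment relaxations to the true peak-CVAR value $P^*$ by assembling results already proven in the excerpt, in close analogy with Theorem~\ref{thm:lmi_converge} for the tail-bound hierarchy. First I would note that the chain of inequalities $p^*_d \geq p^*_{d+1} \geq \cdots \geq p^*_c$ follows from the hierarchical (nested-relaxation) structure of the moment-\ac{SOS} hierarchy: each degree-$d$ program \eqref{eq:cvar_lmi} imposes the localizing-matrix constraints \eqref{eq:lmitauocc}--\eqref{eq:lmicvar2} on a truncated moment sequence, and the feasible set of the degree-$(d+1)$ problem projects into the feasible set of the degree-$d$ problem (dropping the highest-degree moment rows/columns). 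Thus the maxima form a nonincreasing sequence. That each $p^*_d$ dominates the infinite-dimensional measure value $p^*_c$ holds because the genuine moment sequences of any feasible tuple $(\mu, \mu_\tau, \nu, \hat{\nu})$ for \eqref{eq:peak_cvar_meas} satisfy every \ac{LMI} constraint (the localizing matrices of an actual measure are \ac{PSD}, and the linear equality constraints \eqref{eq:cvar_lmi_flow}--\eqref{eq:cvar_lmi_cvar} are the moment counterparts of \eqref{eq:peak_cvar_meas_flow}--\eqref{eq:peak_cvar_meas_cvar}), so every such tuple yields a feasible point of \eqref{eq:cvar_lmi} with identical objective value.

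Second, I would invoke Theorem~\ref{thm:no_relaxation_cvar}, which already gives $p^*_c = P^*$ under A1--A7, so that proving $\lim_{d\to\infty} p^*_d = p^*_c$ suffices to conclude $\lim_{d\to\infty} p^*_d = P^*$. The limit equality is the substantive part. The strategy mirrors Lemma~\ref{lem:moment_bound} and Theorem~\ref{thm:lmi_converge}: I would first verify that all optimization variables are uniformly bounded, which is exactly the content of Theorem~\ref{thm:bounded_mass_cvar} (all four measures $\mu, \mu_\tau, \nu, \hat\nu$ have compact support and bounded mass under A1--A4). Boundedness of moments, together with the Archimedean/ball-constraint structure guaranteed by A8 and the redundant support constraints \eqref{eq:lmicvar1}--\eqref{eq:lmicvar2}, lets me apply a standard weak-$\ast$/Putinar representation argument: any accumulation point of a sequence of optimal truncated moment vectors is the moment sequence of a genuine feasible tuple of measures for \eqref{eq:peak_cvar_meas}. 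This is the same mechanism used in \cite{tacchi2022convergence}, so I would cite \cite[Corollary~8]{tacchi2022convergence} (extended to include the additional one-dimensional measures $\nu,\hat\nu$ and the linear coupling \eqref{eq:cvar_lmi_cvar}) to deliver convergence.

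Concretely, the key steps in order are: (i) establish monotonicity $p^*_d \geq p^*_{d+1}$ from nestedness of the relaxations; (ii) establish $p^*_d \geq p^*_c$ by pushing any feasible measure tuple of \eqref{eq:peak_cvar_meas} into \eqref{eq:cvar_lmi}; (iii) invoke Theorem~\ref{thm:bounded_mass_cvar} for uniform boundedness, ensuring the sequence $\{p^*_d\}$ is bounded below by $p^*_c$ and that optimal moment vectors live in a compact set; (iv) extract a weak-$\ast$ convergent subsequence whose limit moments are represented by measures feasible for \eqref{eq:peak_cvar_meas}, using A8 ball constraints and the compact-support conclusions of Theorem~\ref{thm:bounded_mass_cvar}; and (v) combine with Theorem~\ref{thm:no_relaxation_cvar} to get $\lim_{d\to\infty} p^*_d = p^*_c = P^*$. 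The main obstacle I anticipate is step (iv): the moment-\ac{SOS} convergence results are typically stated for a single measure on a compact \ac{BSA} set, whereas here the program couples the $(n+1)$-dimensional measures $(\mu,\mu_\tau)$ with the one-dimensional measures $(\nu,\hat\nu)$ through the pushforward-and-domination relation \eqref{eq:cvar_lmi_cvar}. Care is needed to verify that the limiting one-dimensional moment sequences $\mathbf{n}, \hat{\mathbf{n}}$ are themselves represented by measures on the compact interval $[\hat{p},\check{p}]$ and that the coupling \eqref{eq:cvar_lmi_cvar} passes to the limit; the redundant constraints flagged in the preceding remark are precisely what makes the Archimedean hypothesis hold on each block so that this extraction goes through.
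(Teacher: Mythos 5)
Your proposal is correct and follows essentially the same route as the paper, which likewise reduces the claim to an application of \cite[Corollary 8]{tacchi2022convergence} using the boundedness of all measures from Theorem \ref{thm:bounded_mass_cvar}, the Archimedean/polynomial structure from A8--A10, and the no-relaxation-gap result $p^*_c = P^*_c$ of Theorem \ref{thm:no_relaxation_cvar}. Your more explicit treatment of monotonicity, the feasibility of genuine moment sequences, and the care needed to pass the coupling constraint \eqref{eq:cvar_lmi_cvar} to the limit simply fills in details the paper leaves implicit.
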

 \begin{proof}
     Boundedness and convergence will occur through Corollary 8 of \cite{tacchi2022convergence} (all sets are Archimedean, all data is polynomial, measure solutions are bounded by Theorem \eqref{thm:bounded_mass_cvar}, and the true objective value is finite).
 \end{proof}

 % \begin{rmk}
     
 %     Even though the Cantelli program upper-bounds the \ac{CVAR} program by Theorem 
 %     \ref{thm:cantelli_cvar_relation}, their degree-$d$ moment-\ac{SOS} tightenings may not obey boundedness relations at the same fixed degree $d$.
 % \end{rmk}

\subsection{Computational Complexity}
In Problem \eqref{eq:chance_lmi}, the computational complexity mostly depends on the number and size of the matrix blocks involved in LMI constraints (\ref{eq:lmitau},\ref{eq:lmiocc}), which in turn depend on the number and degrees of polynomial inequalities describing $X$ (the higher $d_k = \deg(h_k)$, the smaller $\M_{d-\lceil d_k / 2 \rceil}[h \bm]$). At order-$d$, the maximum size of localizing matrices is $\binom{n+1+ D}{D}$. This same analysis occurs for Problem \eqref{eq:cvar_lmi} with respect to the \ac{LMI} constraints \eqref{eq:lmitauocc}-\eqref{eq:lmicvar2}.
%At order-$d$, the size of the moment matrices corresponding to the measures is described in Table \ref{tab:moment_size}.

%\begin{table}[h]
%    \centering
%    \caption{Size of Moment Matrices in \ac{LMI} \eqref{eq:chance_lmi}}
%    \begin{tabular}{rcc}
%         Matrix: &  $\M_d(h \bm)$  \\
%         Size:   & $\binom{n+1+d}{d}$
%    \end{tabular}
%    \label{tab:moment_size}
%\end{table}

Problems \eqref{eq:chance_lmi}  and \eqref{eq:cvar_lmi} must be converted to \ac{SDP}-standard form by introducing equality constraints between the entries of the moment matrices in order to utilize symmetric-cone Interior Point Methods (e.g., Mosek \cite{mosek92}). The per-iteration complexity of \iac{SDP} involving a single moment matrix of size $\binom{n+d}{d}$ scales as $n^{6d}$ \cite{lasserre2006pricing}. The scaling of \iac{SDP} with multiple moment and localizing matrices generally depends on the maximal size of any \ac{PSD} matrix. 
%By Table \ref{tab:moment_size}, this size is $\binom{n+1+\tilde{d}}{\tilde{d}}$ with a scaling impact of $(n+1)^{6 \tilde{d}}$. The complexity of using this chance-peak routine increases in a jointly polynomial manner with $\tilde{d}$ and  $n$.
In our case, this size is at most $\binom{n+1+d}{d}$ with a scaling impact of $(n+1)^{6 d}$ or $d^{4(n+1)}$. The complexity of using this chance-peak routine increases in a jointly polynomial manner with $d$ and  $n$.

% \begin{rmk}
%     The set $[0, T] \times X$ can be gridded into a set of cells using a spatio-temporal decomposition, allowing for per-cell measures $(\mu_0, \mu, \mu_\tau)$ with compatibility constraints between adjacent cells \cite{cibulka2021decomp, holtorf2022spatio}. This allows for a lower degree $d_{\textrm{cell}}$ to be used than in solving \eqref{eq:chance_lmi} with a single degree $d$, which is similar in principle to spline methods in approximation.
% \end{rmk}

% \begin{rmk}
% Rational dynamics $f, g$ may be expressed by adding new variables.
% \end{rmk}

\section{Extensions}
\label{sec:extensions}
% \urg{We can include some of these, or wait until the extended (arxiv)/full journal of the paper to put these in.}
This section outlines extensions to the developed chance-peak framework. The formulas in this section will focus on the \ac{CVAR} programs, but similar expressions may be derived for the tail-bound programs.

\subsection{Switching}

% \textcolor{red}{Matt: a proper introduction of the concepts related to switching systems (switching function etc) is missing here}

The \ac{CVAR}-peak scheme may also be applied to switched stochastic systems. The methods outlined in this section are an extension of the \ac{ODE} approach from \cite{miller2021uncertain}, and are similar to duals of constraints found in \cite{prajna2007framework}.
Assume that there are $N_s \in \N$ subsystems indexed by $\ell=1..N_s$, each with an individual generator $\Lie_\ell$. As an example, a switched system with \ac{SDE} subsystems $\ell = 1..N_s$ could have individual dynamics
\begin{align}
    dx &= f_\ell(t, x) dt + g_\ell(t, x) \label{eq:dyn_switch}
\end{align}
and have associated generators $\Lie_\ell$ 
mapping $\forall v \in C^{1,2}([0, T] \times X) = \cs([0, T] \times X), \forall \ell=1..N_s$:
\begin{align}
    \Lie_\ell v(t, x) &= \partial_t v + f_\ell\cdot \nabla_x v + g_\ell^T(\nabla_{xx}^2v) g_\ell/2.
\end{align}

A switched trajectory is a distribution $x(t)$ and a switching function $S: [0, T] \rightarrow (1..N_s)$ under the constraint that $x(t)$ satisfies \eqref{eq:dyn_switch} whenever $S(t) = \ell$ (the $\ell$-th subsystem is active). A specific trajectory of a switched process starting from an initial point $x_0 \in X$ will be expressed as $x(t \mid x_0, S)$. No dwell time constraints are imposed on the switching sequence $S$; instead, switching can occur arbitrarily quickly in time.

% A switching function $S: [0, T] \rightarrow (1..N_s)$ may be defined 
% The switching function $S: [0, T] \rightarrow (1..N_s)$ yields the resident subsystem of an \ac{SDE} at time $t$. A trajectory of \eqref{eq:dyn_switch} is a solution $x(t \mid x_0, S(\cdot))$ in which dynamics $(f_\ell, g_\ell)$ are followed with state distribution $\mu_t$ when the system satisfies $S(t) = \ell$.

% Following from \cite{miller2021uncertain} for peak estimation under switching uncertainty, chance-peak for switched systems may be accomplished by defining occupation measures $\mu_\ell \in \Mp{[0, T] \times X}$ and generators $\Lie_\ell$ for each subsystem according to \eqref{eq:dyn_switch}. 
Let $\mu \in \Mp{[0, T]\times X}$ be the total occupation measure of the switched process trajectory $x(t \mid x_0, S)$.
The total occupation measure may be split into disjoint subsystem occupation measures $\forall \ell: \ \mu_\ell \in \Mp{[0, T] \times X}$ under the relation $\sum_{\ell=1}^{N_s} \mu_\ell = \mu$. 
The mass of a subsytem's occupation measure $\inp{1}{\mu_\ell}$ is the total amount of time that the trajectory $x(t \mid x_0, S)$ spends in subsystem $S(t) =\ell$.

The martingale equation (generalization of Dynkin's \eqref{eq:dynkin}) for switching-type uncertainty is
\begin{align}
    \mu_\tau = \delta_0 \otimes \mu_0 + \textstyle \sum_{\ell=1}^{N_s} \Lie_\ell^\dagger \mu_\ell.
\end{align}

% No additional modifications to the chance-peak setup in \eqref{eq:var_meas_soc} is necessary to include switching uncertainty. 
The \ac{CVAR}-peak problem in \eqref{eq:peak_cvar_meas} modified  for switching uncertainty is
\begin{subequations}
\label{eq:peak_cvar_switch_meas}
%\begin{align}
%p^* = & \ \sup \quad \inp{\omega}{\psi} \label{eq:peak_cvar_switch_meas_obj} \\
%    & \mu_\tau = \delta_0 \otimes\mu_0 + \textstyle \sum_{\ell=1}^L \Lie_\ell^\dagger \mu_\ell \label{eq:peak_cvar_switch_meas_flow}\\
%    & \inp{1}{\psi} = 1\label{eq:peak_cvar_switch_meas_prob}\\
%    & \epsilon \psi + \hat{\psi} = p_\# \mu_\tau \label{eq:peak_cvar_switch_meas_cvar}\\
%    & \forall \ell\in 1..L: \mu \in \Mp{[0, T] \times X}  \\
%    & \mu_\tau \in \Mp{[0, T] \times X} \label{eq:peak_cvar_switch_meas_peak}\\    
%    & \psi, \hat{\psi} \in \Mp{\R}. \label{eq:peak_cvar_switch_meas_slack}
%\end{align}
{
\begin{align}
p^* = & \ \sup \quad \inp{\mathrm{id}_\R}{\nu} \label{eq:peak_cvar_switch_meas_obj} \\
    & \mu_\tau = \delta_0 \otimes\mu_0 + \textstyle \sum_{\ell=1}^L \Lie_\ell^\dagger \mu_\ell \label{eq:peak_cvar_switch_meas_flow}\\
    & \inp{1}{\nu} = 1\label{eq:peak_cvar_switch_meas_prob}\\
    & \epsilon \nu + \hat{\nu} = p_\# \mu_\tau \label{eq:peak_cvar_switch_meas_cvar}\\
    & \forall \ell\in 1..L: \mu_\ell \in \Mp{[0, T] \times X}  \\
    & \mu_\tau \in \Mp{[0, T] \times X} \label{eq:peak_cvar_switch_meas_peak}\\    
    & \nu, \hat{\nu} \in \Mp{\R}. \label{eq:peak_cvar_switch_meas_slack}
\end{align}
}
\end{subequations}

\subsection{Distance Estimation}
\label{sec:distance}

The \ac{CVAR}-peak methodology developed in this paper can be applied towards bounding (probabilistically) the distance of closest approach to an unsafe set. Let $X_u \subset X$ be an unsafe set, and let ${(x,x') \mapsto\,} c(x, {x'})$ be a metric in $X$. The point-set distance function with respect to $X_u$ is $c(x; X_u) = \inf_{{x'} \in X_u} c(x, {x'})$. 
% The output of a chance-distance program is an infimal value $c^*$ such that there exists some time in which the probability of traveling closer than $c^*$ to $X_u$ is at least $\epsilon$.
% an infimal value $C^*$ such that $\text{Prob}(c(x(t \mid x_0); X_u) \geq C^*\} \geq 1-\epsilon$ holds for all times $t \in T \wedge \tau_X$ and initial conditions $x_0 \sim \mu_0$. 
% As an example, the probability of passing within $C^*=1$ meter to $X_u$ is 99\% for every state distribution $\mu_t$ starting from $\mu_0$.

% no state distribution $\mu_t$ will pass within a closest distance of 1 meter    to the unsafe set $X_u$ at an 

The $\epsilon$-\ac{CVAR} distance program may be expressed as 
\eqref{eq:peak_cvar_meas} with an infimal (rather than supremal) objective $p(x) =c(x; X_u)$. 
Because the objective $c({\bullet}; X_u)$ is not generally polynomial (even when $c$ is polynomial), the \ac{LMI} \eqref{eq:chance_lmi} cannot directly be posed in terms of $c({\bullet}; X_u)$. One method to maintain a polynomial structure is to add time-constant states $dx_u = \0 dt + \0 dw$ to dynamics \eqref{eq:sde} in $x$ and form the state support set $(x, {x_u}) \in X \times X_u$. When $X_u$ is full-dimensional inside $X \subset \R^n$, the occupation measure $\mu \in \Mp{[0, T] \times X \times {X_u}}$ will have a moment matrix of size $\binom{1+2n+d}{d}$ at each fixed degree $d$.

This size can be reduced using the method in \cite{miller2022distance_short}, in which the peak measure $\hat{\mu}_\tau \in \Mp{[0, T] \times X \times {X_u}}$ is decomposed into a joint measure $\eta \in \Mp{X \times {X_u}}$  and a peak measure $\mu_\tau \in \Mp{[0, T] \times X}$ that have equal $x$ marginals. 
% Assume that the diameter of $X$ is $\text{Diam}(X) = \sup_{x, y \in X} c(x, y)$.
The resultant \ac{CVAR}-distance \ac{LP} is
\begin{subequations}
\label{eq:peak_cvar_dist_meas}
%\begin{align}
%c^* = & \ \inf \quad \inp{\omega}{\psi} \label{eq:peak_cvar_dist_meas_obj} \\
%    & \mu_\tau = \delta_0 \otimes\mu_0 + \Lie^\dagger \mu \label{eq:peak_cvar_dist_meas_flow}\\
%    &\forall \phi \in C(X): \label{eq:peak_cvar_dist_meas_marginal}\\
%    & \quad \inp{\phi(x)}{\mu_\tau(t, x)} = \inp{\phi(x)}{\eta(x, y)}  \nonumber & & \\
%    & \inp{1}{\psi} = 1\label{eq:peak_cvar_dist_meas_prob}\\
%    & \forall z \in C(\R): \label{eq:peak_cvar_dist_meas_cvar}\\
%    & \quad \epsilon \inp{z(\omega)}{\psi(\omega)} + \inp{z(\omega)}{\hat{\psi}(\omega)} = \inp{z(c(x, y))}{\mu_\tau(x, y)} \nonumber  \\ 
%    & \mu, \mu_\tau \in \Mp{[0, T] \times X} \label{eq:peak_cvar_dist_meas_peak}\\ 
%    & \eta \in \Mp{X \times X_u} \\
%    & \psi, \hat{\psi} \in \Mp{\R}. \label{eq:peak_cvar_dist_meas_slack}
%\end{align}
{
\begin{align}
c^* = & \ \inf \quad \inp{\mathrm{id}_\R}{\nu} \label{eq:peak_cvar_dist_meas_obj} \\
    & \mu_\tau = \delta_0 \otimes\mu_0 + \Lie^\dagger \mu \label{eq:peak_cvar_dist_meas_flow}\\
    &\forall \phi \in C(X): \label{eq:peak_cvar_dist_meas_marginal}\\
    & \quad \int_{[0,T]\times X} \phi(x) \; d\mu_\tau(t, x) = \int_{X\times Y} \phi(x) \; d\eta(x, y)  \nonumber & & \\
    & \inp{1}{\nu} = 1\label{eq:peak_cvar_dist_meas_prob}\\
    & \epsilon \nu + \hat\nu = c_\#\mu_\tau \label{eq:peak_cvar_dist_meas_cvar}\\
    & \mu, \mu_\tau \in \Mp{[0, T] \times X} \notag\\ 
    & \nu, \hat{\nu} \in \Mp{\R} \notag \\
    & \eta \in \Mp{X \times X_u}. \notag
\end{align}
}
\end{subequations}

Constraint \eqref{eq:peak_cvar_dist_meas_marginal} enforces equality in the $x$-marginals between $\mu_\tau$ and $\eta$. Constraint \eqref{eq:peak_cvar_dist_meas_cvar} is a distance analogue of the pushforward \ac{CVAR} constraint in \eqref{eq:peak_cvar_meas_cvar}.
The Moment matrices of $\eta$ and $\mu$ respectively in the \ac{LMI} program derived from \eqref{eq:peak_cvar_dist_meas} have sizes $\binom{2n+d}{d}$,  $\binom{n+1+\tilde{d}}{\tilde{d}}$, and $({\Delta}+1)$. Unfortunately, the exponentiation operation $\inp{c^{k}}{\eta}$ causes mixed multiplications in variables even when $c$ is additively separable as $c(x, y) = \sum_{i=1}^n c_i(x_i, y_i)$ (Section V of \cite{miller2021distance}), thus forbidding the application of correlative sparsity \cite{waki2006sums} to reduce the complexity of \acp{LMI} from \eqref{eq:peak_cvar_dist_meas}.

\section{Numerical Examples}

\label{sec:examples}
% \urg{Add CVAR content. Revise all tables with bug-free results (from thesis/arxiv). Compare against CVAR bounds.}

% \urg{
All experiments were written in MATLAB (2022a)  and require Mosek \cite{mosek92} and YALMIP \cite{lofberg2004yalmip} dependencies. \ac{MC} sampling  is conducted with 50,000 sample paths (with \ac{SDE} parameters of antithetic sampling and a spacing of $\Delta t = 10^{-3}$) to approximate \ac{VAR} and \ac{CVAR} estimates. All experiments are accompanied by tables of chance-peak bounds and solver times.
Files to generate examples are available at \url{https://github.com/Jarmill/chance\_peak} (tail-bound Cantelli/\ac{VP}) and \url{https://github.com/Jarmill/cvar\_peak} (\ac{CVAR}). 

\subsection{Two States}
The first experiment is a cubic polynomial \ac{SDE} from Example 1 of \cite{prajna2004stochastic}: 
\begin{equation}
\label{eq:flow_sde}
    dx = \begin{bmatrix}x_2 \\ -x_1 -x_2 - \frac{1}{2}x^3_1\end{bmatrix}dt + \begin{bmatrix} 0 \\ 0.1 \end{bmatrix}dW.
\end{equation}

Chance-peak maximization of $p(x) = -x_2$ will occur starting at the point (Dirac-delta initial measure $\mu_0$) $X_0 = [1, 1]$ in a state set of $X= [-1, 2] \times  [-1, 1.5]$ and time horizon of $T=5$. Figure \ref{fig:sde_flow} displays trajectories of \eqref{eq:flow_sde} in cyan, starting from the black-circle point $X_0$. Four of these trajectory sample paths are marked in non-cyan colors.

The `mean' row of Tables \ref{tab:flow_sde} and \ref{tab:flow_sde_cvar} display upper-bounds on the mean as solved by \ac{SDP} truncations of \eqref{eq:peak_meas}. 
The bounds at $\epsilon = \{0.15, 0.1, 0.05\}$ for Table \ref{tab:flow_sde} are acquired by using the \ac{VP} expression in \eqref{eq:var_vp} and solving the \acp{SDP} obtained from \eqref{eq:chance_lmi}.  Table \ref{tab:flow_sde_cvar} displays bounds from \acp{SDP} derived from the \ac{CVAR} \ac{LMI} \eqref{eq:cvar_lmi}.
The dash-dot red, dotted black, and solid red lines in Figure \ref{fig:sde_flow} are the mean, $\epsilon=0.15$ \ac{CVAR}, and $\epsilon=0.15$ \ac{VP} bounds respectively at order 6. All subsequent plots will retain this coloring and styling scheme for the mean, \ac{CVAR}, and \ac{VP} bounds. The top-right entry of Table \ref{tab:flow_sde_cvar} (and all similar tables) are omitted to reduce confusion between the $\epsilon=0.5$ VAR bound (which equals the mean) and the $\epsilon=0.5$ \ac{CVAR} bound (which can exceed the mean).
% \ac{VP} bounds derived from solving the \acp{SDE} \eqref{eq:flow_sde} from \eqref{eq:peak_meas} and \eqref{eq:chance_lmi} are recorded in Table \ref{tab:flow_sde} in the same manner as in Table \ref{tab:flow_det}. Figure \ref{fig:sde_flow} plots trajectories and bounds of \eqref{eq:flow_sde} starting from the black-circle $X_0$ point. Four of these trajectories are visibly distinguished. Just as in Figure \ref{fig:det_flow}, the red dotted line is the mean bound and the red solid line is the $\epsilon=0.15$ bound.

% \begin{table}[h]
%   \centering
%   \caption{Chance-Peak estimation of the Stochastic Flow System \eqref{eq:flow_sde} to maximize $p(x) = -x_2$ \label{tab:flow_sde}}
% \begin{tabular}{lcccccc}
% \multicolumn{1}{c}{order}      & 1      & 2      & 3      & 4      & 5      & 6      \\ \hline
% mean  & 1.5000 & 0.8817 & 0.8773 & 0.8747 & 0.8745 & 0.8744 \\
% $\epsilon = 0.15$ & 1.2910 & 1.2210 & 1.1817 & 1.1689 & 1.1657 & 1.1642 \\
% $\epsilon = 0.1$  & 1.5811 & 1.5009 & 1.4520 & 1.4361 & 1.4323 & 1.4303 \\
% $\epsilon = 0.05$ & 2.2361 & 2.1306 & 2.0613 & 2.0387 & 2.0332 & 2.0305
% \end{tabular}
% \end{table}

\begin{table}[h]
   \centering
   \caption{\ac{VP} Chance-Peak estimation of the Stochastic Flow System \eqref{eq:flow_sde} to maximize $p(x) = -x_2$ \label{tab:flow_sde}}
\begin{tabular}{lcccccc}
\multicolumn{1}{c}{order}        & 2      & 3      & 4      & 5      & 6   & \ac{VAR} \ac{MC}   \\ \hline
$\epsilon = 0.5$ & 0.8818 & 0.8773 & 0.8747 & 0.8745 & 0.8744 & 0.8559\\
$\epsilon = 0.15$ & 1.6660 & 1.6113 & 1.5842 & 1.5771 & 1.5740 & 0.9142\\
$\epsilon = 0.1$  & 2.0757 & 1.9909 & 1.9549 & 1.9461 & 1.9427 & 0.9279\\
$\epsilon = 0.05$  & 2.9960 & 2.8441 & 2.7904 & 2.7772 & 2.7715 &  0.9484 \\
% $\epsilon = 0.5$  & 1.5000 & 0.8817 & 0.8773 & 0.8747 & 0.8745 & 0.8744 \\
% $\epsilon = 0.15$ & 1.2910 & 1.2210 & 1.1817 & 1.1689 & 1.1657 & 1.1642 \\
% $\epsilon = 0.1$  & 1.5811 & 1.5009 & 1.4520 & 1.4361 & 1.4323 & 1.4303 \\
% $\epsilon = 0.05$ & 2.2361 & 2.1306 & 2.0613 & 2.0387 & 2.0332 & 2.0305
\end{tabular}
\end{table}

\begin{figure}
    \centering
    \includegraphics[width=0.9\exfiglength]{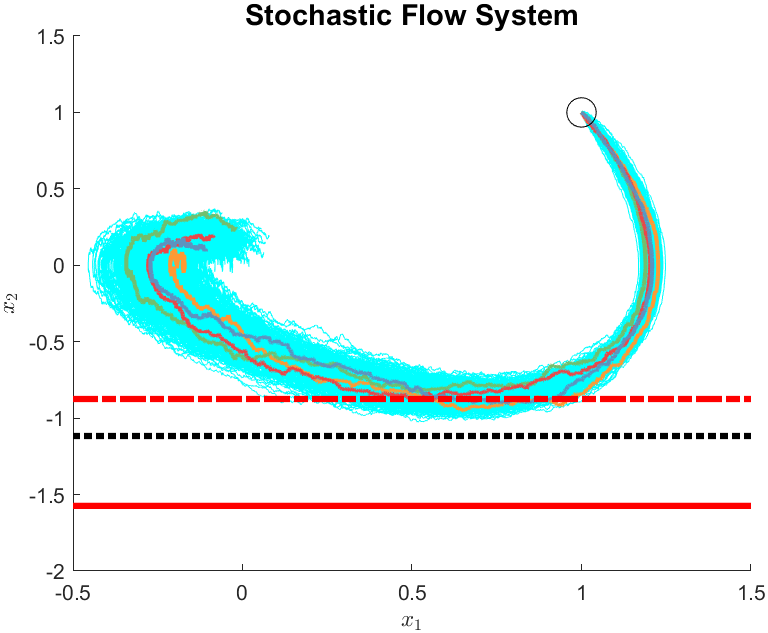}
    \caption{Trajectories of \eqref{eq:flow_sde} with mean (dash-dot red), \ac{CVAR} $\epsilon=0.15$  (dotted black), and \ac{VP} $\epsilon=0.15$ (solid red) bounds}
    \label{fig:sde_flow}
\end{figure}

\begin{table}[!h]
   \centering
   \caption{Solver time (seconds) to compute Table \ref{tab:flow_sde} \label{tab:flow_sde_time}}
\begin{tabular}{lccccc}
\multicolumn{1}{c}{order}      & 2      & 3      & 4      & 5      & 6      \\ \hline
$\epsilon = 0.5$   & 0.380 & 0.449 & 0.625 & 1.583 & 4.552 \\
$\epsilon = 0.15$  & 0.262 & 0.443 & 0.727 & 2.756 & 5.586 \\
$\epsilon = 0.1$   & 0.268 & 0.380 & 1.364 & 2.882 & 3.143 \\
$\epsilon = 0.05$ & 0.242 & 0.390 & 1.261 & 2.923 & 7.539
\end{tabular}
\end{table}

\begin{table}[!h]
   \centering
   \caption{\ac{CVAR} Chance-Peak estimation of the Stochastic Flow System \eqref{eq:flow_sde} to maximize $p(x) = -x_2$ \label{tab:flow_sde_cvar}}
\begin{tabular}{lcccccc}
\multicolumn{1}{c}{order}      & 2      & 3      & 4      & 5      & 6  & \ac{CVAR} \ac{MC}     \\ \hline
mean                      & 0.8818                & 0.8773                & 0.8747                & 0.8745                & 0.8744                &                                            \\
$\epsilon = 0.15$         & 1.2500                & 1.2500                & 1.1655                & 1.1313                & 1.1170                & 0.9432
                                 \\
$\epsilon = 0.1$          & 1.2500                & 1.2500                & 1.2116                & 1.1666                & 1.1466                & 
    0.9546
                                                \\
$\epsilon = 0.05$         & 1.2500                & 1.2500                & 1.2500                & 1.2266                & 1.1959                &    0.9720                                           

\end{tabular}
\end{table}

\begin{table}[!h]
   \centering
   \caption{Solver time (seconds) to compute Table \ref{tab:flow_sde_cvar} \label{tab:flow_sde_cvar_time}}
\begin{tabular}{lccccc}
\multicolumn{1}{c}{order}      & 2      & 3      & 4      & 5      & 6      \\ \hline
mean                      & 0.619                 & 0.504                 & 0.558                 & 2.862                 & 1.652                 \\
$\epsilon = 0.15$         & 0.363                 & 0.402                 & 0.399                 & 1.341                 & 2.016                 \\
$\epsilon = 0.1$          & 0.357                 & 0.442                 & 0.417                 & 1.054                 & 2.093                 \\
$\epsilon = 0.05$         & 0.414                 & 0.393                 & 0.399                 & 0.579                 & 2.534                

\end{tabular}
\end{table}

% \begin{table}[h]
%   \centering
%   \caption{Solver time (seconds) to compute Table \ref{tab:flow_sde} \label{tab:flow_sde_time}}
% \begin{tabular}{lcccccc}
% \multicolumn{1}{c}{order}      & 1      & 2      & 3      & 4      & 5      & 6      \\ \hline
% mean  & 0.643 & 0.417 & 0.431 & 1.963 & 1.659 & 3.641 \\
% $\epsilon = 0.15$ & 0.272 & 0.216 & 0.325 & 1.592 & 4.178 & 7.464 \\
% $\epsilon = 0.1$  & 0.264 & 0.213 & 0.316 & 1.651 & 1.339 & 4.225 \\
% $\epsilon = 0.05$ & 0.184 & 0.222 & 0.366 & 0.936 & 2.446 & 5.298
% \end{tabular}
% \end{table}
\subsection{Three States}

A stochastic version of the Twist dynamics in \cite{miller2022distance_short} is:

\begin{equation}
\label{eq:twist_sde}
    dx = \begin{bmatrix}-2.5x_1 + x_2 - 0.5x_3 + 2x_1^3+2x_3^3 \\
    -x_1+1.5x_2+0.5x_3-2x_2^3-2x_3^3 \\
    1.5 x_1 + 2.5x_2 - 2 x_3 - 2x_1^3 - 2 x_2^3\end{bmatrix}dt + \begin{bmatrix} 0 \\ 0\\  0.1 \end{bmatrix}dW.
\end{equation}

% This second example performs chance-peak maximization of $p(x) = x_3$ starting at the point $X_0 = [0.5, 0, 0]$ with $X=[-0.6, 0.6] \times  [-1, 1] \times [-1, 1.5]$ and $T=5$. \ac{VP} and \ac{CVAR} bounds from solving the \acp{SDP} from \eqref{eq:peak_meas} and \eqref{eq:chance_lmi} are recorded in Tables \ref{tab:twist_sde} and \ref{tab:twist_sde_cvar} in the same manner as in Table \ref{tab:flow_sde} and \ref{tab:flow_sde_cvar}. Figure \ref{fig:sde_twist} plots trajectories and bounds of \eqref{eq:twist_sde} starting from the black-circle $X_0$ point, with four of these trajectories visibly distinguished. The solid red plane in Figure \ref{fig:sde_twist} is the $\epsilon=0.15$ \ac{VP} bound on $x_3$ at order 6, the translucent black plane is the $\epsilon=0.15$ \ac{CVAR} bound at order 6,  and the translucent red plane is the mean bound on $x_3$ (also at order 6).

This example applies the chance-peak setting towards maximization of $p(x) = x_3$, with an initial condition of $X_0 = [0.5, 0, 0]$ and a state set $X=[-0.6, 0.6] \times  [-1, 1] \times [-1, 1.5]$ and $T=5$. 
\ac{VP} and \ac{CVAR} bounds from the \eqref{eq:peak_meas} and \eqref{eq:chance_lmi} \acp{SDP} are written in Tables \ref{tab:twist_sde} and \ref{tab:twist_sde_cvar}, similar in format to Tables \ref{tab:flow_sde} and 
\ref{tab:flow_sde_cvar}. Trajectories and  bounds are plotted in \ref{fig:sde_twist} beginning from the black-circle $X_0$. The three planes are order-6 bounds at $\epsilon = 0.15$, in which the top solid red plane is the \ac{VP} bound, the translucent black plane is the $\epsilon=0.15$ \ac{CVAR} bound at order 6,  and the translucent red plane is the mean bound on $x_3$.

% Just as in Figure \ref{fig:det_flow}, the red dotted line is the mean bound and the red solid line is the $\epsilon=0.15$ bound.

\begin{table}[!h]
   \centering
   \caption{\ac{VP} Chance-Peak estimation of the Stochastic Twist System \eqref{eq:twist_sde}  to maximize $p(x) = x_3$ \label{tab:twist_sde}}
\begin{tabular}{lcccccc}
\multicolumn{1}{c}{order}            & 2      & 3      & 4      & 5      & 6     & \ac{VAR} \ac{MC} \\ \hline
$\epsilon = 0.5$   & 0.9100 & 0.8312 & 0.8231 & 0.8211 & 0.8201 & 0.7206\\
$\epsilon = 0.15$  & 1.6097 & 1.4333 & 1.3545 & 1.3318 & 1.3202 & 0.7685 \\
$\epsilon = 0.1$  & 1.9707 & 1.7453 & 1.6283 & 1.5877 & 1.5739 & 0.7801\\
$\epsilon = 0.05$  & 2.7834 & 2.4426 & 2.2333 & 2.1622 & 2.1267 & 0.7970
\end{tabular}
\end{table}

% \begin{table}[t]
%   \centering
%   \caption{Chance-Peak estimation of the Stochastic Twist System \eqref{eq:twist_sde}  to maximize $p(x) = x_3$ \label{tab:twist_sde}}
% \begin{tabular}{lcccccc}
% \multicolumn{1}{c}{order}           & 1      & 2      & 3      & 4      & 5      & 6      \\ \hline
% mean             & 1.4682                 & 0.9100                 & 0.8312                 & 0.8231                 & 0.8211                 & 0.8203             \\
% $\epsilon=0.15$             & 1.2910                 & 1.1729                 & 1.0813                 & 1.0171                 & 0.9891                 & 0.9755              \\
% $\epsilon=0.1$             & 1.5811                 & 1.4358                 & 1.3180                 & 1.2217                 & 1.1859                 & 1.1694               \\
% $\epsilon=0.05$            & 2.2361                 & 2.0288                 & 1.8497                 & 1.6866                 & 1.6299                 & 1.5894 
% \end{tabular}
% \end{table}

\begin{figure}[!h]
    \centering
    \includegraphics[width=0.9\exfiglength]{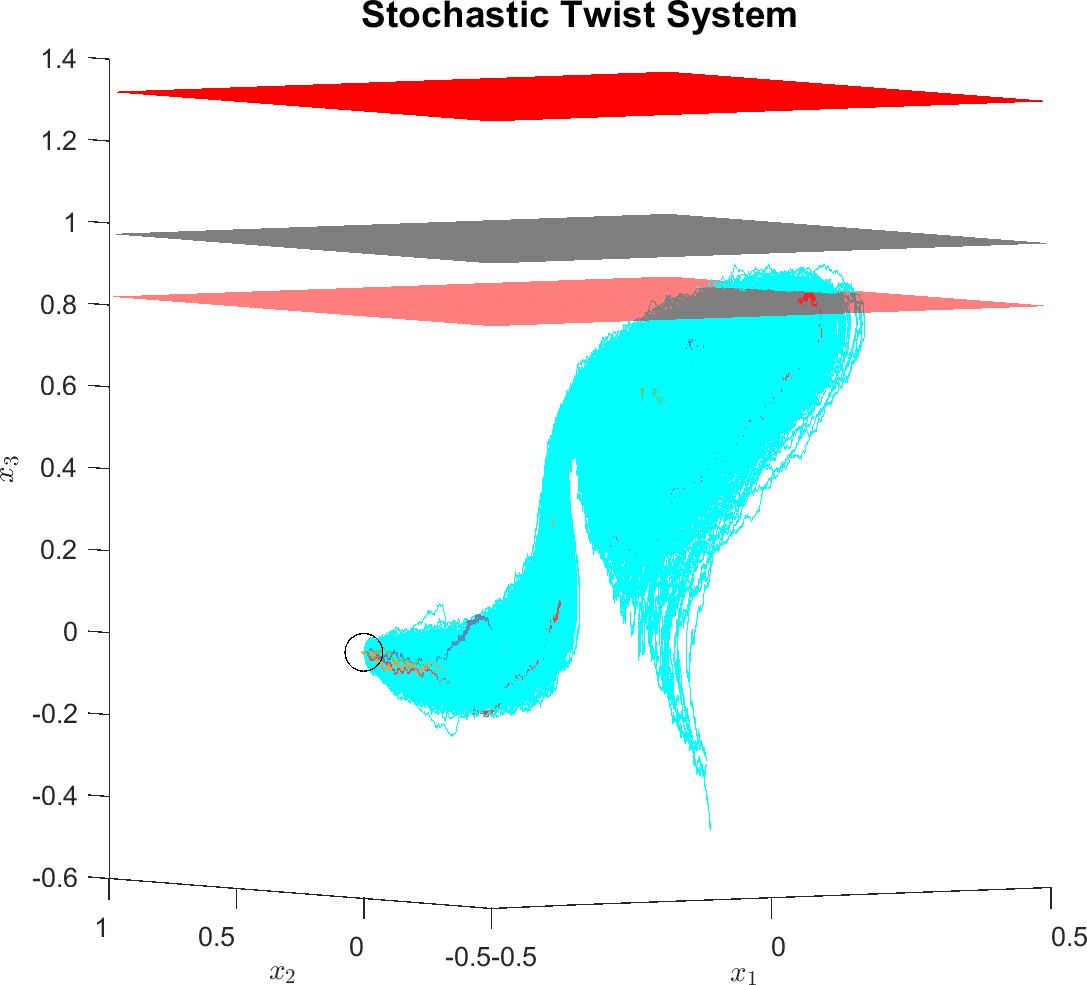}
    \caption{Trajectories of \eqref{eq:twist_sde} with mean \ac{VP} (solid red), mean \ac{CVAR} (translucent black), and $\epsilon=0.15$ (translucent red) bounds}
    \label{fig:sde_twist}
\end{figure}

% \begin{table}[!h]
%   \centering
%   \caption{Solver time (seconds) to compute Table \ref{tab:twist_sde} \label{tab:twist_sde_time}}
% \begin{tabular}{lcccccc}
% \multicolumn{1}{c}{order}      & 1      & 2      & 3      & 4      & 5      & 6      \\ \hline
% mean  & 0.734 & 0.526 & 1.880 & 5.213 & 28.269 & 81.736  \\
% $\epsilon = 0.15$ & 0.316 & 0.397 & 1.117 & 3.876 & 23.302 & 132.539 \\
% $\epsilon = 0.1$  & 0.144 & 0.210 & 1.080 & 6.313 & 27.829 & 116.733 \\
% $\epsilon = 0.05$ & 0.118 & 0.185 & 1.352 & 4.999 & 20.863 & 137.833
% \end{tabular}
% \end{table}

\begin{table}[!h]
   \centering
   \caption{Solver time (seconds) to compute Table \ref{tab:twist_sde} \label{tab:twist_sde_time}}
\begin{tabular}{lccccc}
\multicolumn{1}{c}{order}         & 2      & 3      & 4      & 5      & 6      \\ \hline
$\epsilon = 0.5$   & 0.428 & 1.939 & 5.196 & 19.201 & 83.679  \\
$\epsilon = 0.15$  & 0.328 & 0.999 & 4.755 & 21.108 & 96.985  \\
$\epsilon = 0.1$  & 0.325 & 1.083 & 5.172 & 22.596 & 119.823 \\
$\epsilon = 0.05$  & 0.325 & 1.294 & 4.516 & 22.357 & 115.820
\end{tabular}
\end{table}

%     0.7923
    % 0.8016
    % 0.8156
\begin{table}[!h]
   \centering
   \caption{\ac{CVAR} Chance-Peak estimation of the Stochastic Twist System \eqref{eq:twist_sde}  to maximize $p(x) = x_3$ \label{tab:twist_sde_cvar}}
\begin{tabular}{lcccccc}
\multicolumn{1}{c}{order}            & 2      & 3      & 4      & 5      & 6     & \ac{CVAR} \ac{MC} \\ \hline
mean                      & 0.9100                & 0.8312                & 0.8231                & 0.8211                & 0.8203                &                             \\
$\epsilon = 0.15$         & 1.4519                & 1.1251                & 1.0246                & 0.9892                & 0.9733                &     0.7923
           \\
$\epsilon = 0.1$          & 1.5850                & 1.1880                & 1.0613                & 1.0173                & 0.9950                &    
    0.8016
             \\
$\epsilon = 0.05$         & 1.8479                & 1.3063                & 1.1286                & 1.0646                & 1.0329                &     0.8156

\end{tabular}
\end{table}

\begin{table}[!h]
   \centering
   \caption{Solver time (seconds) to compute Table \ref{tab:twist_sde_cvar} \label{tab:twist_sde_time_cvar}}
\begin{tabular}{lccccc}
\multicolumn{1}{c}{order}         & 2      & 3      & 4      & 5      & 6      \\ \hline
mean                      & 0.761                 & 0.502                 & 1.845                 & 5.078                 & 27.543                \\
$\epsilon = 0.15$         & 0.386                 & 0.469                 & 0.996                 & 4.383                 & 35.634                \\
$\epsilon = 0.1$          & 0.330                 & 0.381                 & 1.030                 & 4.115                 & 20.513                \\
$\epsilon = 0.05$         & 0.328                 & 0.387                 & 1.014                 & 5.451                 & 26.280               

\end{tabular}
\end{table}

\subsection{Discrete-time System}

The prior examples involved \ac{SDE} dynamics. In this subsection, we focus on a discrete-time system in which the parameter $\lambda \in \R$ is sampled according to the unit normal distribution at each time step ($\lambda[t]\sim \mathcal{N}(0, 1)$). The discrete-time system system considered is
\begin{align}
    x_+ = \begin{bmatrix}
        -0.3 x_1  + 0.8x_2 + x_1 x_2 \lambda/4\\ -0.9x_1 - -0.1x_2 - 0.2 x_1^2 +  \lambda/40
    \end{bmatrix}.\label{eq:scatter_discrete}
\end{align}

This system involves a time horizon of $T=1$ with a time step of $\Delta t = 1/10$ (10 iterations after the initial condition). The initial point is $x_0 = [-1; 0.5]$, and trajectories evolve in a support set of $X = [-1.5, 1.5]^2$. Trajectories and order-6 bounds to maximize $p(x) = -x_2$ are displayed in Figure \ref{fig:scatter_discrete}.

\begin{figure}[h]
    \centering
    \includegraphics[width=\exfiglength]{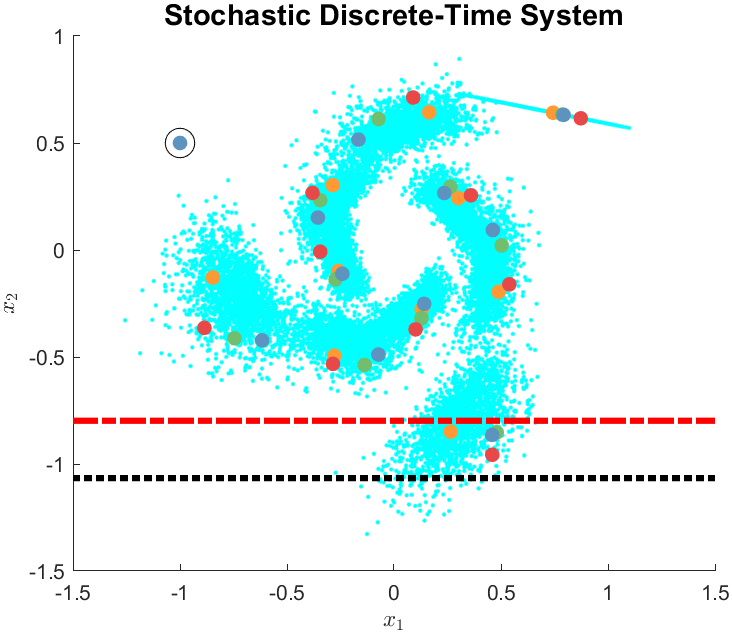}
    \caption{Trajectories of \eqref{eq:scatter_discrete} with $\epsilon = \{0.5, \textrm{ES} \ 0.15\}$ bounds }
    \label{fig:scatter_discrete}
\end{figure}

Table \ref{tab:twist_scatter} reports \ac{CVAR} bounds for $-x_2$ at orders $1$ to $6$. The \ac{VP} and Cantelli objectives produce bounds that are greater than $1.5$ (outside $X$) for the $d$ and $\epsilon$ values tested. As an example, the VP bound at $d=6, \epsilon = 0.15$ is $p(x) \leq -2.1252$

% \urg{do the Monte Carlo experiment report}
\begin{table}[!h]
   \centering
   \caption{\ac{CVAR} Chance-Peak estimation of the Discrete-time system \eqref{eq:scatter_discrete}  to maximize $p(x) = -x_2$ \label{tab:twist_scatter}}
\begin{tabular}{lcccccc}
\multicolumn{1}{c}{order}          & 2      & 3      & 4      & 5      & 6  & \ac{CVAR} MC \\ \hline
mean                                     & 0.8766                & 0.8128                & 0.8002                & 0.7982                & 0.7976     &          \\
$\epsilon = 0.15$                       & 1.5000                & 1.2139                & 1.0971                & 1.0743                & 1.0663  & 1.0287              \\
$\epsilon = 0.1$                          & 1.5000                & 1.2973                & 1.1446                & 1.1083                & 1.0997              & 1.0601  \\
$\epsilon = 0.05$                        & 1.5000                & 1.4500                & 1.2285                & 1.1653                & 1.1540  & 1.1092             

\end{tabular}
\end{table}

\subsection{Switching}

We utilize a modification of Example C from \cite{prajna2007framework} for this final example. The two subsystems involved are:
\begin{subequations}
\label{eq:switched_sde}
\begin{align}
    dx &= \begin{bmatrix}-2.5 x_1 - 2 x_2 \\ -0.5 x_1 - x_2 \end{bmatrix}dt + \begin{bmatrix}0 \\ 0.25 x_2 \end{bmatrix}dW \\
    dx &= \begin{bmatrix}-x_1 - 2 x_2 \\ 2.5 x_1 - x_2 \end{bmatrix}dt + \begin{bmatrix}0 \\ 0.25 x_2 \end{bmatrix}dW.
\end{align}
\end{subequations}

Switched \ac{SDE} trajectories start from an initial condition of $X_0 = (0, 1)$ and are tracked in the state set $X = [-2, 2]^2$  with a time horizon of $T=5$. The chance-peak problem is solved to find bounds on $p(x) = -x_2$.

Figure \eqref{fig:switched_sde} plots switched \ac{SDE} trajectories along with $\epsilon=\{0.5, \text{\ac{CVAR}} \ 0.15,  \text{\ac{VP}} \ 0.15\}$ bounds (at order-6). Tables \ref{tab:switched_sde} and \ref{tab:switched_sde_cvar} list these discovered bounds.

\begin{table}[!h]
   \centering
   \caption{\ac{VP} Chance-Peak upper-bounds for $p(x)=-x_2$ for the Switched System \eqref{eq:switched_sde} \label{tab:switched_sde}}
\begin{tabular}{lcccccc}
\multicolumn{1}{c}{order}                 & 2      & 3      & 4      & 5     & 6   & \ac{VAR} MC   \\ \hline
mean & 0.4304 & 0.3823 & 0.3630 & 0.3487 & 0.3352 & 0.0788\\
$\epsilon = 0.15$ & 0.9953 & 0.9328 & 0.9076 & 0.8918 & 0.8853 & 0.2384\\
$\epsilon = 0.1$   & 1.2888 & 1.2162 & 1.1865 & 1.1687 & 1.1609 & 0.2755\\
$\epsilon = 0.05$  & 1.9469 & 1.8516 & 1.8120 & 1.7891 & 1.7799 & 0.3334           
\end{tabular}
\end{table}

\begin{figure}[!h]
    \centering
    \includegraphics[width=\exfiglength]{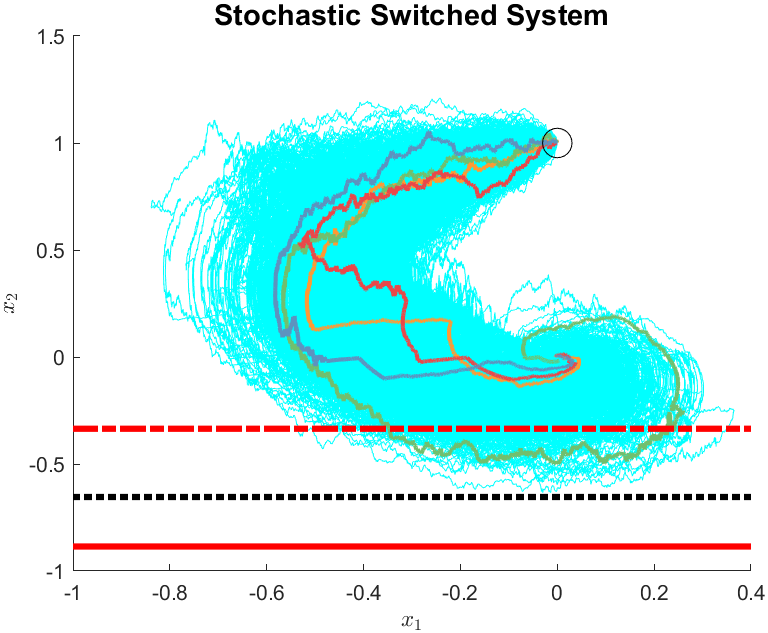}
    \caption{Trajectories of the switched system \eqref{eq:switched_sde} with $\epsilon=\{0.5, \text{\ac{CVAR}} \ 0.15,  \text{\ac{VP}} \ 0.15\}$ bounds}
    \label{fig:switched_sde}
\end{figure}

\begin{table}[!h]
   \centering
   \caption{Solver time (seconds) to compute Table \ref{tab:switched_sde} \label{tab:switched_sde_time}}
\begin{tabular}{lcccccc}
\multicolumn{1}{c}{order}          & 2      & 3      & 4      & 5      & 6      \\ \hline
mean & 0.362 & 0.389 & 0.570 & 1.755 & 2.499 \\
$\epsilon = 0.15$ &  0.257 & 0.295 & 0.587 & 1.812 & 3.718 \\
$\epsilon = 0.1$   & 0.237 & 0.281 & 1.636 & 2.364 & 3.191 \\
$\epsilon = 0.05$  & 0.251 & 0.291 & 0.906 & 1.735 & 2.638
\end{tabular}
\end{table}

\begin{table}[!h]
   \centering
   \caption{\ac{CVAR} Chance-Peak upper-bounds for $p(x)=-x_2$ for the Switched System \eqref{eq:switched_sde} \label{tab:switched_sde_cvar}}
\begin{tabular}{lcccccc}
\multicolumn{1}{c}{order}                 & 2      & 3      & 4      & 5     & 6   & \ac{CVAR} MC   \\ \hline
mean                                     & 0.4304                & 0.3823                & 0.3630                & 0.3488                & 0.3350                                       \\
$\epsilon = 0.15$         & 0.9698                & 0.7882                & 0.7157                & 0.6803                & 0.6540     & 0.4220                                  \\
$\epsilon = 0.1$                       & 1.1137                & 0.8818                & 0.7886                & 0.7433                & 0.7133   & 0.4470                                    \\
$\epsilon = 0.05$                         & 1.3924                & 1.0548                & 0.9207                & 0.8585                & 0.8200  & 0.4688                                    
\end{tabular}
\end{table}

\begin{table}[!h]
   \centering
   \caption{Solver time (seconds) to compute Table \ref{tab:switched_sde} \label{tab:switched_sde_cvar_time}}
\begin{tabular}{lcccccc}
\multicolumn{1}{c}{order}            & 2      & 3      & 4      & 5      & 6      \\ \hline
mean                                       & 0.471                 & 0.466                 & 0.674                 & 1.916                 & 3.310 \\
$\epsilon = 0.15$                          & 0.336                 & 0.371                 & 0.835                 & 1.954                 & 3.170 \\
$\epsilon = 0.1$                          & 0.330                 & 0.360                 & 0.816                 & 2.177                 & 4.237 \\
$\epsilon = 0.05$                         & 0.331                 & 0.373                 & 0.728                 & 1.486                 & 3.602

\end{tabular}
\end{table}

\subsection{Distance Estimation}

% \urg{may need to delete this, this is wrong}

This example will involve distance estimation of a modification of the second subsystem of \eqref{eq:switched_sde}:
\begin{equation}
\label{eq:flowmod_sde}
    dx = \begin{bmatrix}-x_1 - 2 x_2 \\ 2.5 x_1 - x_2 \end{bmatrix}dt + \begin{bmatrix}0 \\ 0.1 \end{bmatrix}dW.
\end{equation}
This $L_2$ chance-distance  task  takes place at a time horizon of $T=5$ with sets $X_0 = [0; 0.75]$, $X = [-1.25, 1] \times [-1, 1]$, and $X_u = \{x_u \in \R^2 \mid 0.1^2 \geq (x_{u1} + 1)^2 + (x_{u2}+1)^2, \ x_{u1} + x_{u2} \leq -2\}$. Distance estimation was accomplished by maximizing \acp{VAR} of the function $-\norm{x-x_u}^2_2$ in \eqref{eq:var_meas_soc}, or by minimizing the \ac{CVAR} distance program in \eqref{eq:peak_cvar_dist_meas}.

System trajectories of \eqref{eq:flowmod_sde} are displayed in Figure \ref{fig:distance}, in which the unsafe half-circle set $X_u$ is drawn in solid red. Squared distance lower bounds from solving \acp{SDP} arising from distance moment programs  are listed in Table \ref{tab:flowmod_distance} and \ref{tab:flowmod_distance_cvar}. Negative distance lower-bounds are truncated to $0$ in Table \ref{tab:flowmod_distance}. This example demonstrates how the \ac{VP} chance-peak distance bounds for distance estimation are very conservative, and how \ac{CVAR} can offer an improvement in stochastic safety analysis.

\begin{table}[!h]
   \centering
   \caption{ \ac{VP} Chance-Peak squared distance lower bounds for System \eqref{eq:flowmod_sde} \label{tab:flowmod_distance}}
\begin{tabular}{lcccccc}
\multicolumn{1}{c}{order}                & 2      & 3      & 4      & 5       & 6     & VAR MC \\ \hline
mean &  1.1929 & 1.2337 & 1.2425 & 1.2490 & 1.2506 & 1.3162\\
$\epsilon = 0.15$  & 0 & 0 & 0 & 0.0182 & 0.0235 & 1.2432\\
$\epsilon = 0.1$  & 0 & 0 & 0 & 0 & 0 & 1.2261\\
$\epsilon = 0.05$  & 0 & 0 & 0 & 0 & 0  & 1.2012                   
\end{tabular}
\end{table}

% \multicolumn{1}{c}{order}           & 1      & 2      & 3      & 4      & 5       & 6    \\ \hline
% mean & 0.5667 & 1.1929 & 1.2337 & 1.2425 & 1.2490 & 1.2506 \\
% $\epsilon = 0.15$ & 0 & 0 & 0 & 0 & 0.0182 & 0.0235 \\
% $\epsilon = 0.1$  & 0 & 0 & 0 & 0 & 0 & 0 \\
% $\epsilon = 0.05$ & 0 & 0 & 0 & 0 & 0 & 0              

\begin{figure}[!h]
    \centering    \includegraphics[width=0.9\exfiglength]{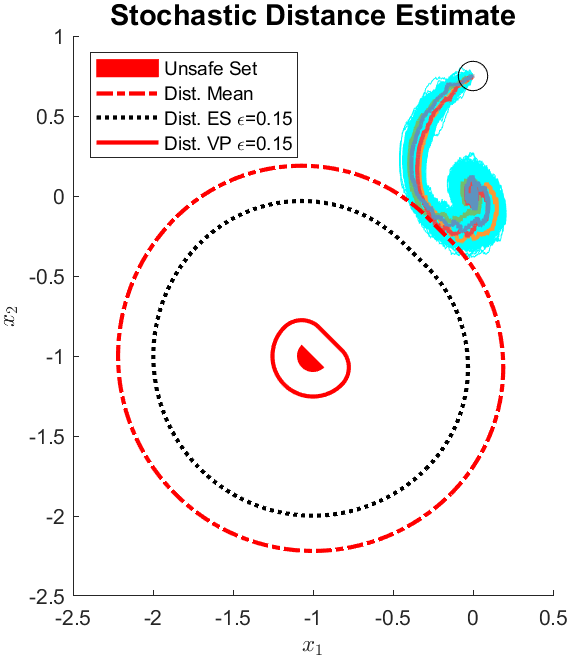}
    \caption{Trajectories of \eqref{eq:flowmod_sde} with $\epsilon = \{0.5, 0.15\}$ bounds}
    \label{fig:distance}
\end{figure}

\begin{table}[!h]
   \centering
   \caption{Solver time (seconds) to compute Table \ref{tab:flowmod_distance} \label{tab:flowmod_distance_time}}
\begin{tabular}{lccccc}
\multicolumn{1}{c}{order}            & 2      & 3      & 4      & 5      & 6      \\ \hline
mean &  0.507 & 0.512 & 1.772 & 6.569 & 21.331 \\
$\epsilon = 0.15$ &  0.346 & 0.453 & 1.233 & 5.836 & 23.930 \\
$\epsilon = 0.1$  &  0.344 & 0.482 & 1.522 & 5.172 & 21.034 \\
$\epsilon = 0.05$ &  0.384 & 0.485 & 1.711 & 5.954 & 26.974
\end{tabular}
\end{table}

\begin{table}[!h]
   \centering
   \caption{\ac{CVAR} Chance-Peak squared distance lower bounds for System \eqref{eq:flowmod_sde} \label{tab:flowmod_distance_cvar}}
\begin{tabular}{lcccccc}
\multicolumn{1}{c}{order}                & 2      & 3      & 4      & 5       & 6   & \ac{CVAR} MC \\ \hline
mean                          & 1.1929                & 1.2337                & 1.2427                & 1.2498                & 1.2494      &         \\
$\epsilon = 0.15$                         & 0                & 0                & 0.4884                & 0.5061                & 0.7980           & 1.2079     \\
$\epsilon = 0.1$           & 0                & 0                & 0.2825                & 0.3018                & 0.7047              & 1.1942  \\
$\epsilon = 0.05$                           & 0                & 0                                & 0                & 0                & 0.4952      & 1.1734         
                   
\end{tabular}
\end{table}

\begin{table}[!h]
   \centering
   \caption{Solver time (seconds) to compute Table \ref{tab:flowmod_distance_cvar} \label{tab:flowmod_distance_time_cvar}}
\begin{tabular}{lcccccc}
\multicolumn{1}{c}{order}      &      & 2      & 3      & 4      & 5      & 6      \\ \hline
mean                          & 0.272                 & 0.295                 & 1.229                 & 10.869                & 6.556                 \\
$\epsilon = 0.15$                          & 0.147                 & 0.207                 & 1.616                 & 4.403                 & 8.655                 \\
$\epsilon = 0.1$                           & 0.129                 & 0.187                 & 1.178                 & 4.987                 & 8.386                 \\
$\epsilon = 0.05$                         & 0.119                 & 0.193                 & 0.430                 & 1.382                 & 7.617                   

\end{tabular}
\end{table}

\section{Conclusion}

\label{sec:conclusion}

This paper considered the problem of finding the maximizing $\ac{VAR}$ of a state function $p(x)$ along trajectories of \ac{SDE} systems. Two upper-bounding methods were used to approximate this maximum \ac{VAR}: tail-bounds and \ac{CVAR}. The tail-bounds are used to formulate an \ac{SOCP} in measures \eqref{eq:var_meas_soc}, while \ac{CVAR} produces an \ac{LP} in measures \eqref{eq:peak_cvar_meas}. Each of these convex programs in measures are approximated by the moment-\ac{SOS} hierarchy of \acp{SDP} in a convergent manner if the problem data (dynamics, support sets) is polynomial.

Future work avenue involves reducing the conservatism of chance-peak based distance estimation, developing stochastic optimal control strategies to minimize quantile statistics, and applying chance-peak techniques towards analysis of hybrid systems.

\bibliographystyle{IEEEtran}
\bibliography{references.bib}

\appendix
\section{Strong Duality of Measure SDPs}
\label{app:duality_general}

 The work in \cite{tacchi2022convergence} gives sufficient condition to ensure strong duality in the framework of linear programming on measures.
This appendix generalizes the results of  \cite{tacchi2022convergence} by forming a framework of convex programming on measures with infinite-dimensional linear constraints and finite dimensional \ac{LMI} constraints on moments. In particular, we add the case of optimization over Borel measures with \ac{SOC} constraints on their moments to the original framework \cite{tacchi2022convergence}.

Let $M, m_0 \in \N$ be positive integers. For $i = 1..M$, let $m_i \in \N$ and $X_i \subset \R^{m_i}$ be a compact set. Let:
\begin{itemize}
    \item $\cX_0 \subset \Sym^{m_0}$ be a vector space of symmetric matrices. Specific instances of $\cX_0$ could be $\{\mathrm{diag}(\chi) \; | \; \chi \in \R^{m_0}\}$ (the space of diagonal matrices, corresponding to linear programming), or $\cX_0 = \Sym^{m_0}$ (the space of all symmetric matrices, corresponding to semidefinite programming). In particular, there exists such a space to represent second order cone programming \cite{alizadeh2003second}),
    \item $\cX_\infty = \mathcal{M}(X_1)\times \ldots \times \mathcal{M}(X_M)$ be a vector space of signed Borel measures, equipped with its weak-$*$ topology, so that its topological dual is $\cX^*_\infty = C(X_1)\times\ldots\times C(X_M)$,
    \item $\cX = \cX_0 \times \cX_\infty$ be our decision space, with topological dual $\cX^* = \cX_0 \times \cX_\infty^*$,
    \item A Banach space $\cY$ with dual $\cY^*$ that will represent our constraint space for equality constraints. In the context of the moment-SOS hierarchy, $\cY$ is chosen as a product space of smooth/polynomial functions defined on compact sets,
    \item $\cX_+ = \{(X,\mu_1,\ldots,\mu_M) \in \cX \; | \; X \succeq 0, \quad \forall i = 1..M, \mu_i \in \Mp{X_i}\}$ and
    \item[] $\cX^*_+  =\{(Y,v_1,\ldots,v_M) \in \cX^* \; | \; Y \succeq 0, \quad \forall i=1..M, v_i \geq 0 \}$ be two convex cones.
\end{itemize}
For $\phi = (X,\mu_1,\ldots,\mu_M) \in \cX$ and $\psi = (Y,v_1,\ldots,v_M) \in \cX^*$, we define the duality
\begin{equation}
    \langle \psi, \phi \rangle_\cX = \Tr{X \, Y} + \sum_{i=1}^M \int_{X_i} v_i(x_i) d\mu_i(x_i).
\end{equation}
Similarly, we denote by $\langle \cdot , \cdot \rangle_\cY$ the duality between elements of $\cY$ and $\cY^*$. Let $A : \cX \longrightarrow \cY^*$ be a continuous linear map, $y \in \cY^*$ be a vector of continuous linear forms (when $\cY$ is made of polynomials, $y$ is a moment sequence), $C \in \cX_0$, $p \in \R[x_1]\times\ldots\times\R[x_M] \subset \cX^*_\infty$ be a vector of polynomials $\gamma = (C,g) \in \cX^*$. We consider the following moment-SDP problem:
\begin{subequations} \label{eq:msdp}
\begin{align}
    p^*_M = & \sup \quad \langle \gamma , \phi \rangle_\cX, &  \phi &\in \cX_+, & A \phi &= y  \label{eq:pmsdp}
\end{align}
with dual problem
\begin{align}
    d^*_M = & \inf \quad \langle w , y \rangle_\cY, &  w &\in \cY,    & A^\dagger w - \gamma &\in \cX^*_+. \label{eq:dmsdp} 
\end{align}
\end{subequations}
It is well known that weak duality $p^*_M \leq d^*_M$ always holds \cite{barvinok2002convex}. In this section, we will prove that under some mild assumptions, strong duality $p^*_M = d^*_M$ also holds.

We first prove a simple lemma on strong duality conditions.

\begin{lem} \label{lem:strongdual}
In this lemma, we consider again the duality pair \eqref{eq:msdp}, but with generic spaces $\cX$, $\cY$ and a convex cone $\cX_+$. We also define, for any vector space $\mathcal{Z}$ containing some vector $\eta$ and any linear map $U : \cX \longrightarrow \mathcal{Z}$, the level set $U_\eta = \{\phi \in \cX \; | \; U \phi = \eta\}$. In such setting, we assume that
\begin{enumerate}
    \item[A1'] $\exists \phi \in \cX_+$ such that $A \phi = y$.
    \item[A2'] $A_0 \cap \gamma_0 \cap \cX_+ = \{0\}$.
    \item[A3'] $\exists \psi \in \cX^*$ such that
    \begin{enumerate}
        \item $\langle \psi , \cX_+ \rangle_\cX \subset \R_+$
        \item $\psi_0 \cap \cX_+ = \{0\}$ 
        \item $\psi_1 \cap \cX_+$ is compact.
    \end{enumerate}
\end{enumerate}
\begin{center}
    Then, $p^*_M = d^*_M.$
\end{center}
Moreover, if $p^*_M < \infty$, then there is an optimal $\phi^*$ feasible for \eqref{eq:pmsdp}  such that $\langle \gamma , \phi^* \rangle_\cX = p^*_M$.
\end{lem}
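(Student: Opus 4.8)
Program~\eqref{eq:msdp} is a conic linear program: the linear objective $\langle\gamma,\cdot\rangle_\cX$ is maximized over the closed convex cone $\cX_+$ subject to the affine constraint $A\phi=y$. Weak duality $p^*_M\le d^*_M$ is already available, so the task reduces to proving $d^*_M\le p^*_M$ together with primal attainment. The plan is to extract from A3$'$ a \emph{compact base} of $\cX_+$, to combine it with A2$'$ in a closed-image lemma, and then to read off both attainment and the absence of a gap from this closedness.

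First I would record that $B:=\psi_1\cap\cX_+=\{\phi\in\cX_+ \mid \langle\psi,\phi\rangle_\cX=1\}$ is a compact convex base of $\cX_+$: convexity is immediate, compactness is exactly A3$'$(c), while A3$'$(a)--(b) force $\langle\psi,\phi\rangle_\cX>0$ for every $\phi\in\cX_+\setminus\{0\}$, so each nonzero ray meets $B$ once, $0\notin B$, and $\cX_+=\{\lambda b \mid \lambda\ge 0,\ b\in B\}$. Next I would prove the key lemma: if $U:\cX\to\mathcal Z$ is continuous linear with $\ker U\cap\cX_+=\{0\}$, then $U(\cX_+)$ is closed. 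The argument takes a net $U\phi_\alpha\to z$, writes $\phi_\alpha=\lambda_\alpha b_\alpha$ with $b_\alpha\in B$, and passes by compactness of $B$ to a subnet $b_\alpha\to b\in B$; bounded $\lambda_\alpha$ gives $z=U(\lambda b)\in U(\cX_+)$, whereas $\lambda_\alpha\to\infty$ yields $Ub_\alpha=U\phi_\alpha/\lambda_\alpha\to 0$, hence $Ub=0$ with $b\neq 0$, contradicting $\ker U\cap\cX_+=\{0\}$. Applying this to $U=(A,\langle\gamma,\cdot\rangle_\cX):\cX\to\cY^*\times\R$, whose kernel meets $\cX_+$ in $A_0\cap\gamma_0\cap\cX_+=\{0\}$ by A2$'$, shows that $U(\cX_+)=\{(A\phi,\langle\gamma,\phi\rangle_\cX)\mid\phi\in\cX_+\}$ is a closed convex cone.

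Primal attainment then follows when $p^*_M<\infty$: by A1$'$ the slice $\{r\in\R\mid (y,r)\in U(\cX_+)\}$ is nonempty, it is bounded above by $p^*_M$, and it is closed because $U(\cX_+)$ is; a nonempty closed subset of $\R$ bounded above attains its supremum, so some $\phi^*\in\cX_+$ realizes $A\phi^*=y$ and $\langle\gamma,\phi^*\rangle_\cX=p^*_M$. The same recession bookkeeping is what forces the value to be finite in the first place: a direction $b\in\ker A\cap\cX_+$ with $\langle\gamma,b\rangle_\cX>0$ would make $p^*_M=+\infty$, and A2$'$ is exactly what excludes the degenerate $\langle\gamma,b\rangle_\cX=0$ case while the finiteness hypothesis handles the rest.

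For the gap I would pass to the concave value function $\vartheta(b)=\sup\{\langle\gamma,\phi\rangle_\cX\mid\phi\in\cX_+,\ A\phi=b\}$, so $p^*_M=\vartheta(y)$. A direct computation of its concave conjugate gives $\vartheta^*(w)=\inf_{\phi\in\cX_+}\langle A^\dagger w-\gamma,\phi\rangle_\cX$, which equals $0$ on the dual-feasible set $\{w\mid A^\dagger w-\gamma\in\cX^*_+\}$ and $-\infty$ elsewhere; hence $\vartheta^{**}(y)=d^*_M$. Thus $p^*_M=d^*_M$ is equivalent to $\vartheta(y)=\vartheta^{**}(y)$, i.e.\ to $\vartheta$ being upper semicontinuous at $y$ for the weak-$*$ topology, and this upper semicontinuity is precisely the closedness of $U(\cX_+)$ transported to the hypograph level (note no dual attainment, hence no constraint qualification beyond A1$'$--A3$'$, is needed). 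The hard part is this closedness step in infinite dimensions: since the weak-$*$ topology on $\cY^*$ is not metrizable, the limiting argument must run over nets, and the boundedness of the scalars $\lambda_\alpha$ must be drawn from the compact base A3$'$(c) and the escaping-direction exclusion A2$'$ rather than from any compactness of the feasible set, which is in general unbounded.
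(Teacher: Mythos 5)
Your proof is correct and follows essentially the same route as the paper: both extract a compact convex base $\psi_1\cap\cX_+$ of $\cX_+$ from A3', combine it with A2' to show that the cone $\{(A\phi,\langle\gamma,\phi\rangle_\cX)\mid\phi\in\cX_+\}$ is closed, and then deduce the zero gap and primal attainment from A1' and this closedness. The only difference is that the paper delegates the last two steps to \cite[Chap.~IV, Thm~(7.2) and Lem~(7.3)]{barvinok2002convex}, whereas you supply direct proofs of them (a net argument over the compact base for closedness, and a biconjugation of the value function for the absence of a gap).
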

\begin{proof}
    We use \cite[Chap. IV: Thm (7.2), Lem (7.3)]{barvinok2002convex}. Consider the cone
   $$\cX_A^\gamma = \{(A\phi , \langle\gamma,\phi\rangle_\cX) \; | \; \phi \in \cX_+\}.$$
   Theorem (7.2) of \cite{barvinok2002convex} ensures that under A1' and closedness of $\cX_A^\gamma$, strong duality holds, and that $p^*_M < \infty$ then implies existence of an optimal $\phi^*$. Then, Lemma (7.3) of \cite{barvinok2002convex} states that if $\cX_+$ has a compact convex base and A2' holds, then $A_\cX^\gamma$ is closed. Thus, we need to find a compact convex base of $\cX_+$.

Let $\phi \in \cX_+ \setminus \{0\}$. A3'.\textit{(a)-(b)} ensure that $\langle \psi , \phi \rangle_\cX > 0$ so that $\tilde\phi = \frac{\phi}{\langle\psi , \phi \rangle_\cX}$ is well defined and belongs to the cone $\cX_+ \setminus \{0\}$. Moreover, $\langle \psi ,  \tilde\phi\rangle_\cX = 1$ is clear by definition, so that $\tilde\phi \in \psi_1 \cap \cX_+$. This proves that any $\phi \in \cX_+ \setminus \{0\}$ can be described as $\phi = \langle \psi , \phi \rangle_\cX \tilde\phi$ with $\tilde\phi \in \psi_1 \cap \cX_+$ and $\langle \psi , \phi \rangle_\cX > 0$, which is the definition of $\psi_1 \cap \cX_+$ being a base of $\cX$. By compactness assumption A3'.\textit{(c)}, we deduce that the assumptions of Lemma (7.3) of Theorem (7.2) of \cite{barvinok2002convex}  hold: $X_A^\gamma$ is closed and thus $p^*_M = d^*_M$.
\end{proof}

\begin{thm}
\label{thm:strongdual}
Suppose that there exists $B>0$ such that for all $\phi = (X,\mu_1,\ldots,\mu_M)$ feasible for \eqref{eq:pmsdp}, one has $\Tr{X^2} \leq B^2$ and for $i=1..M$, $\langle 1 , \mu_i\rangle \leq B$. Also assume that at least one such feasible $\phi$ exists. Then, $p^*_M = d^*_M$. Moreover, there exists an optimal $\phi^*$ such that $A\phi^* = y$ and $\langle \gamma , \phi^* \rangle_\cX = p^*_M$.
\end{thm}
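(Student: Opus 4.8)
The plan is to reduce the theorem to Lemma~\ref{lem:strongdual} by verifying its three hypotheses A1'--A3' for the concrete cones and spaces at hand. Hypothesis A1' is nothing but the standing assumption that at least one feasible $\phi \in \cX_+$ with $A\phi = y$ exists, so it requires no work. The two remaining conditions are where the structure of the problem enters: the uniform bound $B$ will be used to exclude recession directions and thereby secure A2', while the compactness of the supports $X_i$ (together with Banach--Alaoglu) will furnish the compact base required by A3'. Once A1'--A3' are in place, Lemma~\ref{lem:strongdual} immediately yields $p^*_M = d^*_M$ and, provided $p^*_M < \infty$, the existence of an optimizer.

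For A2' I would argue that boundedness of the feasible set precludes any nonzero direction in $A_0 \cap \cX_+$. Suppose $\phi_0 = (X_0, \mu_{0,1}, \ldots, \mu_{0,M}) \in A_0 \cap \cX_+$ is nonzero and let $\bar\phi = (\bar X, \bar\mu_1, \ldots, \bar\mu_M)$ be the feasible point from A1'. Convexity of the cone $\cX_+$ and linearity of $A$ make $\bar\phi + t\phi_0$ feasible for every $t \geq 0$, so the theorem's bound must hold along this ray. However $\Tr{(\bar X + t X_0)^2} = \Tr{\bar X^2} + 2t\,\Tr{\bar X X_0} + t^2\,\Tr{X_0^2}$ with $\Tr{\bar X X_0} \geq 0$ since $\bar X, X_0 \succeq 0$, and $\langle 1, \bar\mu_i + t\mu_{0,i}\rangle = \langle 1, \bar\mu_i\rangle + t\,\langle 1, \mu_{0,i}\rangle$; if $X_0 \neq 0$ the first expression diverges (as $\Tr{X_0^2} > 0$) and if some $\mu_{0,i} \neq 0$ the corresponding mass diverges (as $\langle 1, \mu_{0,i}\rangle > 0$). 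Either possibility contradicts the uniform bound, forcing $\phi_0 = 0$. This gives the stronger statement $A_0 \cap \cX_+ = \{0\}$, whence A2' follows.

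For A3' the natural candidate is $\psi = (I, 1, \ldots, 1) \in \cX^*$, the identity matrix paired with the constant function $1$ on each $X_i$, so that $\langle \psi, \phi\rangle_\cX = \Tr{X} + \sum_{i=1}^M \langle 1, \mu_i\rangle$. On $\cX_+$ this is a sum of nonnegative terms, establishing A3'.(a), and it vanishes exactly when $\Tr{X} = 0$ and every $\langle 1, \mu_i\rangle = 0$, i.e. when $X = 0$ and each $\mu_i = 0$, establishing A3'.(b). The crux is A3'.(c): on the base $\psi_1 \cap \cX_+$ one has $\Tr{X} \leq 1$ and $\langle 1, \mu_i\rangle \leq 1$, so the matrix component ranges over the compact set of trace-bounded PSD matrices in the finite-dimensional space $\cX_0$, while each measure component lies in $\{\mu_i \in \Mp{X_i} : \langle 1, \mu_i\rangle \leq 1\}$, which is weak-$*$ compact by Banach--Alaoglu because $X_i$ is compact; being a weak-$*$ closed slice of the product of these compact sets, $\psi_1 \cap \cX_+$ is itself compact. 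I expect this compactness verification, with its mixing of the finite-dimensional matrix topology and the weak-$*$ topology on the measure factors, to be the main obstacle. Finally, the same bound $B$ together with continuity of $\gamma = (C, g)$ on the compact supports bounds $\langle \gamma, \phi\rangle_\cX$ over the feasible set, so $p^*_M < \infty$ and the optimizer $\phi^*$ claimed in the theorem is delivered by Lemma~\ref{lem:strongdual}.
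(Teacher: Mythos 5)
Your proposal is correct and follows essentially the same route as the paper: reduce to Lemma \ref{lem:strongdual}, rule out recession directions via the ray $\bar\phi + t\phi_0$ and the uniform bound $B$ to get A2' (you in fact prove the slightly stronger $A_0 \cap \cX_+ = \{0\}$, which suffices), take $\psi = (I,1,\ldots,1)$ and invoke Banach--Alaoglu together with finite-dimensional compactness of trace-bounded PSD matrices for A3', and bound the objective over the feasible set to guarantee $p^*_M < \infty$ and hence attainment. The paper's compactness argument bundles the factors into a single total-variation-plus-Frobenius norm before applying Banach--Alaoglu, whereas you argue factor-wise and slice a product of compacts, but these are the same argument in substance.
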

\begin{proof}
We prove that the assumptions of Lemma \ref{lem:strongdual} hold. First of all, $\cX_+$ is indeed a convex cone under A1', as it is the product of convex cones $\Sym_+^{m_0}$ and $\Mp{X_i}$.

Next, we focus on hypothesis A2' Let $\phi = (X,\mu_1,\ldots,\mu_M) \in A_0 \cap \gamma_0 \cap \cX_+$. We want to prove that $\phi = 0$. Let $\phi^{(0)} = (X^{(0)}, \mu_1^{(0)},\ldots,\mu_M^{(0)}) \in \cX_+$ such that $A \phi^{(0)} = y$. Define, for $t \geq 0$, $\phi^{(t)} = \phi^{(0)} + t \phi$. Let $t \geq 0$. Since $\cX_+$ is a convex cone, $\phi^{(t)} \in \cX_+$. In addition,

$$A \phi^{(t)} = A \phi^{(0)} + t \, A \phi = A \phi^{(0)} = y,$$

so that $\phi^{(t)}$ is feasible for \eqref{eq:pmsdp}. Thus, by assumption,

\begin{align*}
    B^2 & \geq \Tr{(X^{(0)} + t X)^2} \\
    & = \Tr{X^{(0)2} + 2t X^{(0)} X + t^2 X^2} \\
    & = \Tr{X^{(0)2}} + 2t \Tr{X^{(0)} X} + t^2 \Tr{X^2} \\
    & = t^2 \Tr{X^2} + \underset{t\to\infty}{o}(t^2).
\end{align*}
Staying bounded when $t$ goes to infinity requires that $\Tr{X^2} = 0$, implying that $X=0$. The same reasoning replacing $\Tr{X^2}$ with $\langle 1 , \mu_i \rangle$ yields that for all $i=1..M$, $\mu_i=0$. Thus, $\phi=0$ and A2' holds.

We turn to A3' and consider $\psi = (I_{m_0},1_M)$ where $I_{m_0}$ is the size $m_0$ identity matrix and $1_M$ is the dimension $M$ vector of functions that are all constant equal to $1$. Note that if $(X,\mu_1,\ldots,\mu_M) \in \cX_+$, then $\Tr{X} \geq 0$ and  $\forall i=1..M: \ \langle 1 , \mu_i \rangle \geq 0$ (i.e. $\langle \psi , \cX_+ \rangle_\cX \subset \R_+$). Moreover the equality cases in those inequalities only hold for $X=0$ and $\mu_i=0$ respectively, so that $\psi_0 \cap \cX_+ = \{0\}.$ It only remains to prove that $\psi_1 \cap \cX_+$ is compact.

First, it is bounded for the norm $\norm{(X,\mu_1,\ldots,\mu_M)} = \sqrt{\Tr{X^2}} + \sum_{i=1}^M \norm{\mu_i}_{TV}$ where the total variation norm of a signed measure is $$\norm{\mu}_{TV} = \sup\{\langle v , \mu \rangle \; | \; -1 \leq v \leq 1\}.$$ In particular, the total variation norm of a nonnegative measure $\mu \in \Mp{X}$ is equal to its mass $\norm{\mu}_{TV} = \langle 1 , \mu \rangle$.

Indeed, let $(X,\mu_1,\ldots,\mu_M) \in \psi_1 \cap \cX_+$; then $\Tr{X}\leq 1$ and for $i=1..M$, $1 \geq \langle 1 , \mu_i \rangle = \norm{\mu_i}_{TV}$. As $X$ is positive semidefinite, $\Tr{X}\leq 1$ means that none of the eigenvalues of $X$ are bigger than $1$. As such, $\Tr{X^2} \leq m_0$, because it is the sum of the squares of the eigenvalues of $X$. Thus, for all $\phi \in \psi_1 \cap \cX_+$, $\norm{\phi} \leq m_0 + M$.

Then, it is also closed for the weak-* topology of $\cX$ by continuity of $\langle \psi , \cdot \rangle_\cX$ and closedness of $\cX_+$ as the product of closed sets $\Sym^{m_0}_+$ and $\Mp{X_i}$. Thus, $\psi_1 \cap \cX_+$ is weak-* closed and bounded: according to the Banach-Alaoglu theorem, it is compact: assumption A3' of Lemma \ref{lem:strongdual} holds, which concludes the proof of strong duality.

Finally, let $\phi = (X,\mu_1,\ldots,\mu_M) \in \cX_+$ feasible for \eqref{eq:pmsdp}. Then, by assumption, $\Tr{X^2} \leq B^2$ and $\langle 1 , \mu_i \rangle \leq B$ for all $i=1..M$. Thus, one has
\begin{align*}
    \langle \gamma , \phi \rangle_\cX & = \Tr{C\, X} + \sum_{i=1}^M \langle g_i , \mu_i \rangle. \\
    \intertext{Using the Cauchy-Schwarz inequality results in }
    & \leq \sqrt{\Tr{C^2} \Tr{X^2}} + \sum_{i=1}^M \langle g_i , \mu_i \rangle \\    
    & \leq \sqrt{\Tr{C^2} \Tr{X^2}} + \sum_{i=1}^M \sup_{X_i}|g_i| \langle 1 , \mu_i \rangle \\
    & \leq \left(\sqrt{\Tr{C^2}} + \sum_{i=1}^M \norm{g_i}_\infty \right) B
\end{align*}
so that taking the supremum over all feasible $\phi$ yields
$$ p^*_M \leq \left(\sqrt{\Tr{C^2}} + \sum_{i=1}^M \norm{g_i}_\infty \right) B < \infty, $$
which is the last hypothesis of Lemma \ref{lem:strongdual} and ensures existence of an optimal solution. 
\end{proof}
\section{Strong Duality of
Chance-Peak Concentration-Bound Linear Programs}
\label{app:duality_chance}

In order to apply the strong duality results of Appendix \ref{app:duality_general} towards the tail-bound chance-peak problem (Theorem \ref{thm:cdc_strong_dual}), we need to provide an \ac{SDP}-representation of the \ac{SOC} cone.  
\begin{lem} \label{lem:soc_lmi}
An element $(y,s) \in \mathbb{L}^n$ satisfies the \ac{LMI} \cite{alizadeh2003second}
\begin{align}
    \Phi = \begin{bmatrix}
    s & y^T \\ y & s I_n
    \end{bmatrix} \succeq 0. \label{eq:soc_embedding}
\end{align}

%\urg{We should explicitly comment about and use an \ac{SOC} description for Appendix \ref{app:duality_general} as a specific case. The cone defined by \eqref{eq:soc_embedding} is not self-dual, but the parameters $(s, \kappa)$ such that \eqref{eq:soc_embedding} is PSD does form the self-dual cone $Q^n$.

%}{\color{blue}Matt: Are you sure that this cone is not self-dual? I remember I proved the contrary at some point. From a mathematical viewpoint, seeing SOCP as it is usually viewed or through \eqref{eq:soc_embedding} is totally equivalent as they are isomorphic.}
\end{lem}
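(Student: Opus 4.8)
The plan is to verify $\Phi \succeq 0$ directly by checking that its associated quadratic form is nonnegative on every test vector, which is the defining property of a positive semidefinite matrix. First I would fix an arbitrary $(t, w) \in \R \times \R^n$ and expand the quadratic form of the $(n+1)\times(n+1)$ matrix in \eqref{eq:soc_embedding}:
\[
    \begin{bmatrix} t & w^T \end{bmatrix} \Phi \begin{bmatrix} t \\ w \end{bmatrix} = s\,t^2 + 2\,t\,w^T y + s\,\norm{w}_2^2 .
\]
The task then reduces to showing that this scalar is nonnegative whenever $(y,s) \in \mathbb{L}^n$, i.e. whenever $s \geq \norm{y}_2 \geq 0$.

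The key step is a Cauchy--Schwarz estimate of the cross term followed by a completion of squares. Since $\norm{y}_2 \leq s$ and $s \geq 0$, one has $w^T y \geq -\norm{w}_2\,\norm{y}_2 \geq -\norm{w}_2\, s$, so that
\begin{align*}
    s\,t^2 + 2\,t\,w^T y + s\,\norm{w}_2^2 &\geq s\,t^2 - 2\,\abs{t}\,\norm{w}_2\, s + s\,\norm{w}_2^2 \\
    &= s\,(\abs{t} - \norm{w}_2)^2 \;\geq\; 0,
\end{align*}
where the last inequality again uses $s \geq 0$. As $(t,w)$ was arbitrary, this establishes $\Phi \succeq 0$ and proves the stated implication.

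Since the purpose of this lemma is to furnish a genuine \ac{SDP}-representation of $\mathbb{L}^n$ for use in Appendix \ref{app:duality_general}, I would also record the converse implication, namely that $\Phi \succeq 0$ forces $(y,s) \in \mathbb{L}^n$. For $s > 0$ the block $s I_n$ is invertible, and the Schur complement criterion renders $\Phi \succeq 0$ equivalent to the nonnegativity of $s - y^T (s I_n)^{-1} y = s - \norm{y}_2^2 / s$, i.e. to $s^2 \geq \norm{y}_2^2$; with $s > 0$ this is precisely $s \geq \norm{y}_2$. The boundary case $s = 0$ must be handled separately, and this is the one step that requires a little care: the Schur complement argument no longer applies, so I would instead invoke the structural fact that a positive semidefinite matrix with a zero diagonal entry has vanishing entries throughout the corresponding row and column, forcing $y = 0$ and hence $(y,s) = (0,0) \in \mathbb{L}^n$. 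Apart from this degenerate case, the computation is entirely elementary, and I do not anticipate any substantive obstacle.
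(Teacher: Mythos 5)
Your proof is correct, but your main argument takes a genuinely different route from the paper's. For the stated implication $(y,s) \in \mathbb{L}^n \Rightarrow \Phi \succeq 0$ you verify the quadratic form directly: expand $\begin{bmatrix} t & w^T\end{bmatrix}\Phi\begin{bmatrix} t \\ w\end{bmatrix} = st^2 + 2tw^Ty + s\norm{w}_2^2$, bound the cross term by Cauchy--Schwarz using $\norm{y}_2 \leq s$, and complete the square to get $s(\abs{t}-\norm{w}_2)^2 \geq 0$. This is elementary, handles $s=0$ and $s>0$ uniformly, and requires no auxiliary lemma. The paper instead argues entirely through the Schur complement: for $s>0$ it shows $\Phi \succeq 0$ is equivalent to $s - y^Ty/s \geq 0$, i.e. $s^2 \geq \norm{y}_2^2$, and treats $s=0$ as a separate degenerate case where $\Phi$ is the zero matrix. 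The Schur route has the advantage of delivering the equivalence in one stroke, which is what Appendix \ref{app:duality_general} actually needs (the \ac{SOC} must be exactly \ac{SDP}-representable, not merely contained in an \ac{LMI} feasible set); your direct computation only gives the forward inclusion, and you correctly recognized this by appending the converse --- which you then prove by essentially the same Schur-complement argument as the paper, plus the standard zero-diagonal fact for the $s=0$ boundary case. In net effect the two proofs establish the same content; yours separates the two implications and uses a more elementary argument for the direction the lemma literally asserts.
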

\begin{proof}
When ${s}=0$, the containment $({y}, 0) \in {\mathbb{L}}^n$ requires that ${y} = \0_n$. The matrix in $\Phi$ \eqref{eq:soc_embedding} is therefore the $\0_{n \times n}$ matrix which is \ac{PSD}. 
Now consider the case where ${s} > 0$. A Schur complement of $\Phi$ yields the constraint ${s - y^T y / s} \geq 0$. Multiplying through by the positive ${s}$ results in ${s^2 - \norm{y}^2_2} \geq 0, \ {s} > 0$, which is the definition of the \ac{SOC} cone ${(y, s) \in \mathbb{L}^n}$. Lemma \ref{lem:soc_lmi} is therefore proven.
% \urg{Matt @ Jared: the proof is a trivial use of the Schur complement; can you write it there?}
\end{proof}

Lemma \ref{lem:soc_lmi} ensures that problem \eqref{eq:var_meas_soc_ext} is an instance of the more generic \eqref{eq:pmsdp} with $M = 2$, $X_1 = X_2 = [0,T]\times X$, $\cY = C^2([0,T]\times X) \times \R^3$ and \begin{equation}
    \cX_0 = \left\{ { \begin{bmatrix}
    s & y^T \\ y & s I_3
    \end{bmatrix}} \; \middle| \; { s \in \R, y \in \R^3 } \right\}.\end{equation}
Therefore, we only need to verify that the hypotheses of Theorem \ref{thm:strongdual} hold in our specific case. 

%Let $M$ be a matrix from \eqref{eq:soc_embedding} constructed from $(s, \kappa)$ in \eqref{eq:soc_end}. 
Letting ${z}=([{z}_1, {z}_2, {z}_3], {z}_4) \in \mathbb{L}^3$ be an \ac{SOC}-constrained variable, we define the matrix $\Phi$ from \eqref{eq:soc_embedding} as
\begin{align}
    \Phi = \begin{bmatrix}
    {z}_4 & {z}_1 & {z}_2 & {z}_3 \\
    {z}_1 & {z}_4 & 0 & 0 \\
    {z}_2 & 0 & {z}_4 & 0 \\
    {z}_3 & 0 & 0 & {z}_4
    \end{bmatrix}.\label{eq:soc_embedding_q}
\end{align}

Theorem \ref{thm:strongdual} requires the following sufficient conditions to prove strong duality between \eqref{eq:var_meas_soc_ext} and \eqref{eq:var_cont_soc} and their optimality obtainment. 
% \urg{Matt @ Jared: please modify these conditions so that they match the new presentation of appendix A.}
\begin{enumerate}
\item[R1]There exists a feasible solution for $(\mu_\tau, \mu, {z})$ from \eqref{eq:var_meas_soc_ext}.
    \item[R2] The measures $\mu_\tau, \mu$ are bounded.
    \item[R3] The square of the matrix $\Phi$ from \eqref{eq:soc_embedding_q} has bounded trace.
\end{enumerate}

We start with R1. Letting $x(t \mid x_0)$ be an \ac{SDE} trajectory from \eqref{eq:sde_sol} and $t^* \in (0, T]$ be a stopping time with $\tau^* = t^* \wedge \tau_X$, we define $\mu$ as the occupation measure of $x(t \mid x_0)$ and $\mu_\tau$ as its time-$\tau^*$ state distribution $\mu_{\tau^*}$. Feasible choices for entries of the \ac{SOC}-constrained ${z}$  are (from Lemma \ref{lem:sqrt})
\begin{align}
    {z}_1 &= 1-\inp{p^2}{\mu_{t^*}}, & {z}_2 &= \sqrt{\inp{p^2}{\mu_{t^*}} - \inp{p}{\mu_{t^*}}^2}, \\
    {z}_3 &= 2\inp{p}{\mu_{t^*}}, & {z}_4 &= 1+\inp{p^2}{\mu_{t^*}}.
\end{align}

Requirement R2's satisfaction follows the statement in Lemma \ref{lem:moment_bound}  that $\mu_\tau, \mu$ are bounded under A1-A3.

We end with R3. The trace $\Tr{\Phi^2}=\sum_{ij} \Phi_{ij}^2 $ is equal to
\begin{subequations}
\label{eq:trace_square}
\begin{align}
    \Tr{\Phi^2} &=  2{z}_1^2 + 2{z}_2^2 + 2{z}_3^2 +4 {z}_4^2 \\&
    = 2(1-\inp{p^2}{\mu_{t^*}})^2 + 2( \sqrt{\inp{p^2}{\mu_{t^*}} - \inp{p}{\mu_{t^*}}^2})^2  \nonumber\\
    &\qquad + 2(2\inp{p}{\mu_{t^*}})^2 +4(1+\inp{p^2}{\mu_{t^*}})^2 \\
    &= 6 + 6\inp{p}{\mu_{t^*}}^2 + 6\inp{p^2}{\mu_{t^*}} + 6 \inp{p^2}{\mu_{t^*}}^2.
\end{align}
\end{subequations}

Let $\Pi_1 = \max_{x \in X} p(x)$ and  $\Pi_2 = \max_{x \in X} p(x)^2$ be bounds on $p$ and $p^2$ in $X$. Both $\Pi_1$ and $\Pi_2$ will be finite by the compactness of $X$ (A1) and the continuity of $p$ within $X$ (A3). Given that $\mu_{t^*}$ is a probability distribution supported in $X$, the moments of $\mu_{t^*}$ will be bounded by $\inp{p}{\mu_{t^*}} \leq \Pi_1$ and $\inp{p^2}{\mu_{t^*}} \leq \Pi_2$. The squared-trace in \eqref{eq:trace_square} can be upper-bounded by a finite value $B^2$ such that
\begin{align}
    \Tr{\Phi^2} \leq 6(1 +  \Pi_1^2 + \Pi_2 + \Pi_2^2) = B^2 < \infty. \label{eq:trace_square_B}
\end{align}

The finite bound $B^2 \in [0, \infty)$ from \eqref{eq:trace_square_B} validates R3, and completes all conditions necessary for Theorem \ref{thm:strongdual} to provide for strong duality and optima attainment.

% \old{
% Requirement R1 is fulfilled by assumption A1. Let $\Pi = \max_{x \in X} p^2(x)$ be a finite bound on $p^2(x)$. By \eqref{eq:soc_end_aff}, it holds that $\kappa = \inp{p^2(x)}{\mu_\tau} + 1 \leq 1 + \Pi$. 
% Requirement R2 follows from the strong duality Proposition 2.2 option B \urg{[ref]} in Appendix \ref{app:duality_general}.
% The trace $\Tr{M^2}$ is equal to $2(\kappa^2 + \norm{s}_2^2) \leq 4\kappa^2\leq 4\Pi^2$. Since all measure masses are bounded above by $T$ (Lemma \ref{lem:moment_bound}), choosing $B = \max(T, 4 \Pi^2) < \infty$ fulfills R2. Requirement R3 is satisfied under assumptions A2 and A3. Lastly, the proof of Theorem \ref{thm:upper_bound_nonlinear} describes a process to return feasible measures from any \ac{SDE} trajectory. The quantities $(s, \kappa)$ may be derived from the chosen stopping time distribution $\mu_\tau$. Strong duality therefore holds between.
% }

\section{Strong Duality of
Chance-Peak Expected Shortfall Linear Programs}
\label{app:duality_cvar}

We follow the steps and conventions of Theorem 2.6 of \cite{tacchi2021thesis} to perform this proof of strong duality.

We collect the groups of variables into 
\begin{align}
    \bbmu &= (\mu_\tau, \mu, {\nu, \hat{\nu}}) &   \bell &= ({u,v,w}). \label{eq:bbmu}
\end{align}

We now define the following variable spaces
\begin{align}
    \mathcal{X}' &=  C([0, T]\times X)^2 \times C(\R)^2 \label{eq:dual_spaces}\\
    \mathcal{X} &= \mathcal{M}([0, T]\times X)^2 \times \mathcal{M}(\R)^2,\nonumber
\end{align}
and note that their nonnegative subcones are
\begin{align}
    \mathcal{X}_+' &= C_+([0, T]\times X)^2 \times C_+(\R) ^2\label{eq:dual_cones}\\
    \mathcal{X}_+ &= \Mp{[0, T]\times X}^2 \times \Mp{\R}^2. \nonumber
\end{align}
Additionally, the measure $\bbmu$ from \eqref{eq:bbmu} obeys $\bbmu \in \mathcal{X}_+$.
Assumption A1 imposes that the cones $\mathcal{X}_+'$ and $\mathcal{X}_+$ in \eqref{eq:dual_cones} form a pair of topological duals.

We define the constraint spaces $\mathcal{Y}$ and $\mathcal{Y}'$ as
\begin{align}
    \mathcal{Y}' &= {\R \times \cs([0, T] \times X) \times C(\R)} \\
    \mathcal{Y} &= {0 \times \cs([0, T] \times X)' \times \mathcal{M}(X)}.
\end{align}

We follow the convention of \cite{tacchi2021thesis} and define $\mathcal{Y}_+ = {\{0_\mathcal{Y}\}}$ and $\mathcal{Y}'_+ = \mathcal{Y}'$. The variable $\bell$ from \ref{eq:bbmu} satisfies $\bell \in \mathcal{Y}'$.

We formulate the affine maps of
% \begin{subequations}
\begin{align}
    \mathcal{A}(\bbmu) &= [{\inp{1}{\nu}}, \ \mu_\tau - \Lie^\dagger \mu, \ \epsilon {\nu + \hat{\nu}} - p_\# \mu_\tau] \\
    \mathcal{A}^*(\bell) &= [{ v - w \circ p , \ -\Lie v, \ u + \epsilon w, \ w} ],\nonumber
\end{align}
% \end{subequations}
and the associated cost/constraint data 
\begin{subequations}
    \begin{align}
        \mathbf{b} &= [1, \ \delta_0 \otimes \mu_0, \ 0] & 
        \mathbf{c} &= [0, \ 0, \ {\mathrm{id}_\R}, \ 0].
    \end{align}
\end{subequations}

We point out that $\A$ and $\A^*$ are linear adjoints, that $\mathcal{X}$ has a weak-* topology, and that $\mathcal{Y}$ has a sup-norm-bounded weak topology. We also note that the following pairings  satisfy 
\begin{subequations}
\begin{align}
\inp{\mathbf{c}}{\boldsymbol{\mu}} &= \inp{\mathrm{id}_\R}{\nu} \\
    \inp{\boldsymbol{\ell}}{\mathbf{b}} &= {u} + \inp{v(0, {\bullet})}{\mu_0}.
\end{align}
\end{subequations}

Problem \eqref{eq:peak_cvar_meas} may be expressed  as 
\begin{align}
    p^* =& \sup_{\boldsymbol{\mu} \in \mathcal{X}_+} \inp{\mathbf{c}}{\boldsymbol{\mu}} & & \mathbf{b} - \A(\boldsymbol{\mu}) \in \mathcal{Y}_+. \label{eq:dist_meas_std}\\
\intertext{Problem \eqref{eq:peak_cvar_cont} may be converted into }
    d^* = &\inf_{\boldsymbol{\ell} \in \mathcal{Y}'_+} \inp{\boldsymbol{\ell}}{\mathbf{b}}
    & &\A'(\boldsymbol{\ell}) - \mathbf{c} \in \mathcal{X}_+. \label{eq:dist_cont_std}
\end{align}

Sufficient conditions for strong duality from \cite[Theorem 2.6]{tacchi2021thesis} are:
\begin{itemize}
    \item[R1] Masses of measures in $\bbmu$ satisfying $b-\mathcal{A}(\bbmu) \in \mathcal{Y}_+$ are bounded.
    \item[R2] There exists a feasible $\bbmu^f$ with $b-\mathcal{A}(\bbmu^f) \in \mathcal{Y}_+$.
    \item[R3] Functions involved in defining $\mathbf{c}$, $\mathbf{b}$,  and $\mathcal{A}$ are continuous.
\end{itemize}

Requirement R1 is satisfied by Theorem \ref{thm:bounded_mass_cvar} under A1, A3, and A4. Requirement R2 holds because there exists a feasible solution $\bbmu^f$ from the proof of Theorem \ref{thm:upper_bound_cvar} (constructed from a feasible trajectory of the stochastic process) under A4. Requirement R3 is fulfilled because $v \in \cs([0, T] \times X)$ implies that $\Lie v \in C([0, T] \times X)$ (A2) and $p$ is continuous (A3). All requirements are fulfilled, which proves strong duality between \eqref{eq:peak_cvar_meas} and \eqref{eq:peak_cvar_cont}.

% \urg{We need to add a condition about the $\Mp{\R}$ support of $\psi, \hat{\psi}$, or alternatively work over the compact space $\Mp{[P_{\min}, P_{\max}]}$. The moments of $\psi, \hat{\psi}$ are bounded by Lemma \ref{thm:bounded_mass}. Theorem 2.6 of \cite{tacchi2021thesis} assumes that measures are supported on a subset of $\R$, it is not clear if this subset needs to be compact.

% The interval $[P_{\min}, P_{\max}]$ could be challenging to find. Would any compact superset of this interval also lead to strong duality?
% }

\end{document}